\definecolor{LightGray}{rgb}{0.96,0.97,0.98}
\numberwithin{equation}{subsection}
\newtheorem*{theorem}{Theorem}
\newtheorem*{proposition}{Proposition}
\newtheorem*{lemma}{Lemma}
\newtheorem*{sublemma}{Subemma}
\newtheorem*{corollary}{Corollary}
\theoremstyle{definition}
\newtheorem*{definition}{Definition}
\newtheorem*{notation}{Notation}
\renewcommand{\subsection}{\@startsection{subsection}{2}{0pt}{-3ex plus
-1ex minus -0.2ex}{-2mm plus -0pt minus -2pt}{\normalfont\bfseries}} \makeatother
\newcommand{\C}{\mathbb{C}}
\newcommand{\Z}{\mathbb{Z}}
\newcommand{\N}{\mathbb{N}}
\newcommand{\D}{{\mathcal D}}
\newcommand{\hreg}{{\mathfrak{h}}^{\operatorname{reg}}}
\DeclareMathOperator{\Ext}{\mathrm{Ext}}
\DeclareMathOperator{\SSExt}{\mathcal{EXT}}
\DeclareMathOperator{\SSHom}{\mathcal{HOM}}
\DeclareMathOperator{\SExt}{\mathcal{E}{\it xt}}
\DeclareMathOperator{\SHom}{\mathcal{H}{\it om}}
\DeclareMathOperator{\Hom}{\mathrm{Hom}}
\DeclareMathOperator{\End}{\mathrm{End}}
\DeclareMathOperator{\im}{\mathrm{Im}}
\DeclareMathOperator{\Ker}{\mathrm{Ker}}
\DeclareMathOperator{\gr}{\mathrm{gr}}
\DeclareMathOperator{\injdim}{\mathrm{injdim}}
\DeclareMathOperator{\chdim}{\mathrm{chdim}}
\DeclareMathOperator{\rGr}{{\ttt{Grmod}\text{-}\hskip -1.5pt}}
\DeclareMathOperator{\rqgr}{\ttt{qgr}\text{-}\hskip -2pt}
 \newcommand{\tR}{{\Delta}(R)}
\DeclareMathOperator{\Spec}{\ttt{Spec}}
 \newcommand{\ttt}{\textsf}
  \newcommand{\lgr}{\text{-}\ttt{grmod}}
 \newcommand{\lGr}{\text{-}\ttt{Grmod}}
 \newcommand{\lqgr}{\text{-}\ttt{qgr}}  
 \newcommand{\lQGr}{\text{-}\ttt{Qgr}}  
  \newcommand{\lMod}{\text{-}\ttt{Mod}}
  \newcommand{\lmod}{\text{-}\ttt{mod}}
  \newcommand{\ltors}{\text{-}\ttt{tors}}
    \newcommand{\lTors}{\text{-}\ttt{Tors}}
\newcommand{\h}{\mathfrak{h}}
\newcommand{\OO}{\mathcal{O}}
\newcommand{\Lambdaplus}{\Lambda}
\newcommand{\Lambdawide}{{\sf G}(\Lambda)}
\newcommand{\Gammaplus}{\Gamma}
\DeclareMathOperator{\hi}{Hilb^n\mathbb{C}^2}
\DeclareMathOperator{\Proj}{\mathrm{Proj}}
\DeclareMathOperator{\Char}{\mathbf{Char}}
\DeclareMathOperator{\Supp}{\mathrm{Supp}}
\DeclareMathOperator{\coh}{\mathrm{Coh}}
\DeclareMathOperator{\qcoh}{\mathrm{Qcoh}}
\DeclareMathOperator{\md}{-\mathrm{mod}}
\DeclareMathOperator{\codim}{\mathrm{codim}}
\DeclareMathOperator{\op}{\mathrm{tr}}
\DeclareMathOperator{\opp}{\mathrm{op}}
\newcommand{\hr}{\mathfrak{h}^{\rm{reg}}}
 \newcommand{\MM}{\mathcal{M}}
 \newcommand{\NN}{\mathcal{N}}
 \newcommand{\XX}{\mathcal{X}}
 \newcommand{\XXgamma}{\mathcal{X}_{\gamma}} 
  \newcommand{\XXmu}{\mathcal{X}_{\mu}} 
   \newcommand{\XXnu}{\mathcal{X}_{\nu}} 
 \newcommand{\KK}{\mathcal{K}}
  \newcommand{\RR}{\mathcal{R}}
   \newcommand{\VV}{\mathcal{V}}
 \newcommand{\EE}{\mathcal{E}}
\newcommand{\ee}{{\epsilon_{jr}}}
\newcommand{\evee}{{\epsilon_{jr}^\vee}}
  \newcommand{\ve}{{\nabla}} 
  \newcommand{\kk}{\Bbbk}
\newenvironment{boxcode}{\VerbatimEnvironment%
\hskip 8pt
  \begin{Sbox} 
  \begin{minipage}{\linewidth-12\fboxsep-2\fboxrule-4pt}    
}{
  \end{minipage}   
  \end{Sbox} 
  \fcolorbox{black}{LightGray}{\TheSbox} 
} 
\begin{document} \title{The Auslander-Gorenstein property for $\Z$-algebras}
  \author{I. G. Gordon} \address{(IGG) School of Mathematics and Maxwell Institute for Mathematical Sciences,
Edinburgh  University, Edinburgh EH9 3JZ, Scotland.}
\email{igordon@ed.ac.uk}
\author{J. T. Stafford}
\address{(JTS) School of Mathematics, Alan Turing Building, The University of Manchester, Oxford Road, Manchester M13 9PL,
England.}
\email{Toby.Stafford@manchester.ac.uk}

\subjclass{Primary: 14A22,   16A62, 16W70, 18G40, 20C08}  
   \keywords{Auslander-Gorenstein algebra, double Ext spectral sequence, Cherednik
algebra,   equidimensionality of characteristic varieties. }
   \thanks{The first   author is grateful for the financial support of EPSRC grant EP/G007632. The second
   author was a Royal Society Wolfson Research Merit Award Holder while this research was conducted.}    

  \begin{abstract}     
We provide a framework for  part of the homological theory of $\mathbb{Z}$-algebras and their generalizations, directed towards analogues of the Auslander-Gorenstein condition and the associated double Ext spectral sequence that are useful for enveloping algebras of Lie algebras and related rings. As an application, we prove the  equidimensionality of the characteristic variety of an irreducible representation of the $\mathbb{Z}$-algebra, and for related representations over quantum symplectic resolutions.  
 In  the special case of Cherednik algebras of type A, this answers a question raised by the authors.
  \end{abstract}
   \maketitle
\tableofcontents

 \section{Introduction}

  \subsection{}\label{intro-1}  Throughout the paper, all rings will be algebras over a fixed base field $\kk$.
  An unadorned tensor product $\otimes$ will denote a tensor product over $\kk$.
  A \emph{lower triangular $\Z$-algebra} is a $\Z$-bigraded associative  algebra $$S = \bigoplus_{0\leq q\leq p\in \Z} S_{p,q}$$ with
 matrix-style multiplication and where each non-zero $S_{q,q}$ is a noetherian, unital   algebra
 and the $S_{p,q}$ are finitely generated  modules over both $S_{p,p}$ and $S_{q,q}$. 
 
 These algebras are important in noncommutative algebraic geometry -- see, for example, \cite{SV} or \cite{VdB}. They are also useful    in geometric representation theory, including in the study of   rational Cherednik algebras \cite{GS1,GS2},  deformed preprojective algebras \cite{Boy,Mus},   finite $W$-algebras \cite{Gi} and  deformations of  conical  symplectic singularities \cite{BPW}.
 In applications, the  $\mathbb{Z}$-algebra provides an effective way to relate the representation theory of the given algebra  to the geometry of  a resolution of singularities for its associated graded ring: indeed, 
  the $\mathbb{Z}$-algebra can be regarded 
  as a quantization of that resolution. In the above examples,  the $\mathbb{Z}$-algebras    quantize 
 Hilbert schemes of points on the plane,  minimal resolutions of Kleinian singularities, resolutions of Slodowy slices and, most generally, symplectic resolutions of conical symplectic singularities, respectively.

   What is missing, and what is provided in this paper, is a suitable homological machine to relate the commutative and noncommutative  theories. Our aim is  show how the Auslander-Gorenstein condition and the related double Ext spectral sequence, that are so useful 
 for   enveloping algebras and related rings,  can be generalised to work for $\mathbb{Z}$-algebras.
 As an application we generalize Gabber's equi\-dimensionality result to $\mathbb{Z}$-algebras.
 
  \subsection{}\label{intro-2}   
  To explain our results in more detail, we need some notation. 
  Write  $S_{0,0}\lmod$ for  the category of noetherian left $S_{0,0}$-modules, 
  $S\lgr$ for the category of noetherian left $S$-modules and    $S\lqgr$ for  the quotient category  of $S\lgr$ modulo the 
 bounded modules. Let $\pi_S:S\lgr\to S\lqgr$ be the natural projection.
    If the   $S_{p,q}$   are $(S_{p,p},\,S_{q,q})$-progenerators, we say that $S$ is a \emph{Morita $\mathbb{Z}$-algebra}. In this case,  for any $q\geq 0$,  there is
  an equivalence of categories $ S_{q,q}\lmod \buildrel{\sim}\over{\longrightarrow}  S\lqgr$ given by
  $\Phi: M\mapsto \pi_S( S_{\ast,q} \otimes_{S_{q,q}} M)$,   where
    $S_{\ast,q}=\bigoplus_{p\geq q} S_{p,q}$.
  
   We assume that  there is a  filtration $F$ on $S$ such that 
 the associated graded algebra $\gr_FS=\bigoplus \gr_FS_{p.q}=\tR$. Here,
  $\tR=\bigoplus \tR_{p,q}$ is  the $\Z$-algebra associated to some finitely generated commutative graded algebra
   $R=\bigoplus R_n$ by defining  $\tR_{p,q}=R_{p-q}$ for all $p\geq q$.  There are natural equivalences $\tR\lqgr \simeq R\lqgr  \simeq \coh(X)$, for $X=\Proj(R)$.
If  $M\in S_{00}\lmod$ has a good filtration $F$,  then $\mathcal{M}=\Phi(M)$ 
 is naturally filtered 
and has  associated graded sheaf $\gr_F\mathcal{M}\in \coh(X)$. The \emph{characteristic variety} $\Char(M)\subseteq X$
 of $M$ (or of  $\mathcal{M}$) is then the support of $\gr_F\mathcal{M}$.
 
  \subsection{}\label{intro-25}   If, for example, $S_{0,0}=U_\lambda$ is the spherical subalgebra of a rational Cherednik algebra of type $A$ -- see  Subsection~\ref{97} for the definitions -- then $S$ is defined by setting $S_{p,p}=U_{\lambda+p}$, with 
 a canonical choice of $(U_{\lambda+p},U_{\lambda+q})$-bimodules  $S_{p,q}$. In this case $S$ is  a Morita $\mathbb{Z}$-algebra whenever $\mathcal{R}e(\lambda)\geq\frac{1}{2}$. Moreover,     $X=\sf{Hilb}^n(\mathbb{C}^2)$ is the Hilbert scheme of $n$ points on the place and the natural map $X\to Y= \Spec(R_0)$ is a resolution of singularities of the quotient variety $(\mathbb{C}^{n}\times \mathbb{C}^{n})/S_n$, see Section~\ref{symplectic-sect} or \cite{GS1} for more details. The structure of $\Char(M)$ is complex: when $M$ is an irreducible 
 $U_\lambda$-representation from category $\mathcal{O}$ it has irreducible components parametrized by partitions of $n$, see \cite[Section~6]{GS2}.

  \subsection{}\label{intro-3}  
  This motivates the  question, raised in \cite{GS2} and again in \cite{Ro} and answered here, of whether $\Char(M)$ is equidimensional for simple $S_{0,0}$-modules $M$.  To answer this question we follow Gabber's proof for unital algebras, as described in \cite{Le}. This uses the 
  notion of a filtered  Auslander-Gorenstein algebra
   $U$ and the
  convergent  spectral sequence  
  $$\Ext^p_U(\Ext^q_U(M,U),U)\Rightarrow \mathbb{H}^{p-q}(M) = \begin{cases} M&\text{if} \ p=q\\
  0 & \text{otherwise.}\end{cases}$$
   
 In order to generalise this spectral sequence to $\mathbb{Z}$-algebras, we need extra conditions.
Suppose now that $S = \bigoplus_{p\geq q\geq 0} S_{p,q}$ and $T =  \bigoplus_{p\geq q\geq 0} T_{p,q}$ are two 
 filtered $\Z$-algebras with $S_{0,0} = T_{0,0}^{op}$.  We say that $q_S$ is a \emph{good parameter for
  $S$} if $S_{\geq \lambda }=\bigoplus_{  p\geq q\geq q_S}S_{p,q}$ is a Morita $\mathbb{Z}$-algebra. 
  We consider the following hypotheses.

\begin{itemize}
\item[(H1)] There exists a good parameter $q_S$ for $S$.
\item[(H2)] $\gr S \cong \tR$ where $R = \bigoplus_{n\geq 0} R_n$ is a finitely generated commutative   domain satisfying 
$R_m R_n = R_{m+n}$ for all $m,n \geq 0$.
\item[(H3)] Conditions (H1) and (H2) hold for $T$ with $q_S = q_T$. Furthermore, under the tensor product filtration defined in Notation~\ref{filt-notation}, 
$\gr (S_{p,0}\otimes_{S_{0,0}} T_{q,0}) \cong R_{p+q}$ for all $p,q \geq q_S$.
\item[(H4)] $X = \Proj(R)$ is Gorenstein, $\Spec (R_0)$ is normal, and the canonical morphism $X\rightarrow \Spec (R_0)$ is birational.
\end{itemize}

 \subsection{}\label{intro-4}  As we will describe in \ref{examples-intro} there are a number of important examples of 
 $\mathbb{Z}$-algebras arising in 
geometric representation theory that satisfy these hypotheses.  For these algebras the main results of the paper culminate in the following theorem, which summarises  Theorems~\ref{Bj4.15-NC} and \ref{main-thm}.

\begin{theorem}
Suppose that $S$ and $T$ are $\Z$-algebras that satisfy   Hypotheses (H1--H4). 
\begin{enumerate}
\item There exists a  module $\mathcal{X}\in (S\otimes T)\lqgr$ such that, for $\mathcal{M}\in S\lqgr$,  there is a convergent spectral sequence
$$\SSExt^p_T(\SSExt^q_S(M,\mathcal{X}),\mathcal{X})\Rightarrow \mathbb{H}^{p-q}(\mathcal{M}) 
= \begin{cases} \mathcal{M}&\text{if} \ p=q\\
  0 & \text{otherwise.}\end{cases}$$
\item If $\mathcal{M}\in S\lqgr$ and $\mathcal{N}$ is a $T$-submodule of 
$ \SSExt^q_S(M,\mathcal{X})$,
then $\SSExt^p_T(M,\mathcal{X})=0$ for $p<q$.
\item If   $ \Phi(M) \in S\lQGr$ is irreducible for some $M\in S_{q,q}\lmod$, then $\Char(M)$ is equidimensional.
\end{enumerate}
\end{theorem}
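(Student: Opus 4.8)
The plan is to treat parts (1) and (2) as the construction, inside the category $S\lqgr$, of the full filtered Auslander--Gorenstein package, with $\mathcal{X}$ playing the role of a dualizing object, $\SSExt_S$ and $\SSExt_T$ the roles of $\Ext_U$ and $\Ext_{U^{\opp}}$, and $\coh X$ the role of $\gr U$; and then to deduce (3) by transporting Gabber's argument (as in \cite{Le}) into that setting.

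For (1) and (2): the object $\mathcal{X}$ should be built from the bimodules $S_{p,0}\otimes_{S_{0,0}}T_{q,0}$, which by (H3) carry filtrations with $\gr_F(S_{p,0}\otimes_{S_{0,0}}T_{q,0})\cong R_{p+q}$, so that $\gr_F\mathcal{X}$ corresponds to the structure sheaf of $X$ and, after the twist furnished by (H4), to the dualizing sheaf $\omega_X$, which is invertible since $X$ is Gorenstein. Being Gorenstein of finite Krull dimension, $X$ satisfies the commutative Auslander condition, carries the double $\SExt$ spectral sequence $\SExt^p_{\mathcal{O}_X}(\SExt^q_{\mathcal{O}_X}(\mathcal{F},\omega_X),\omega_X)\Rightarrow\mathcal{F}$ on the diagonal, and admits the duality $\mathbf{R}\SHom_{\mathcal{O}_X}(-,\omega_X)$ on $D^b_{\coh}(X)$. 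One then lifts all of this through the filtration: $\SSExt^\bullet_S(-,\mathcal{X})$ is computed from good filtered projective resolutions $P_\bullet\to\mathcal{M}$ in $S\lqgr$, the complex $\SSHom_S(P_\bullet,\mathcal{X})$ is filtered with associated graded quasi-isomorphic to $\mathbf{R}\SHom_{\mathcal{O}_X}(\gr_F\mathcal{M},\omega_X)$, and the spectral sequence of this filtered complex both yields the biduality $\mathbf{R}\SSHom_T(\mathbf{R}\SSHom_S(\mathcal{M},\mathcal{X}),\mathcal{X})\cong\mathcal{M}$, whence the spectral sequence of (1), and shows that $\gr_F\SSExt^q_S(\mathcal{M},\mathcal{X})$ is a subquotient of $\SExt^q_{\mathcal{O}_X}(\gr_F\mathcal{M},\omega_X)$; combined with the commutative Auslander condition and monotonicity of grade under subquotients this gives (2).

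For (3): put $j:=\min\{q:\SSExt^q_S(\mathcal{M},\mathcal{X})\neq 0\}$ for $\mathcal{M}=\Phi(M)$ equipped with a good filtration, so $\Char(M)=\supp\gr_F\mathcal{M}$. Since $R$ is a finitely generated commutative domain (H2) and $X\to\Spec(R_0)$ is birational onto the normal --- hence irreducible --- variety $\Spec(R_0)$ (H4), $X$ is integral and Cohen--Macaulay, so grade equals codimension and, using the comparison of the previous paragraph together with the fact that a nonzero filtered coherent object has nonzero associated graded, $j=\codim_X\Char(M)=\dim X-\dim\Char(M)$. Because $\Phi$ is an equivalence of categories, irreducibility of $\Phi(M)$ forces $M$ simple, so $\mathcal{M}$ has no proper nonzero subobjects and is trivially $j$-pure. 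Suppose $\Char(M)$ is not equidimensional; then $\gr_F\mathcal{M}$ has an associated point of codimension $j'>j$, so its maximal subsheaf $\mathcal{G}'$ all of whose associated points have codimension $\geq j'$ is nonzero and $j'$-pure, with $\SExt^i_{\mathcal{O}_X}(\mathcal{G}',\omega_X)=0$ for $i<j'$. Now run Gabber's argument: feed $0\to\mathcal{G}'\to\gr_F\mathcal{M}\to\gr_F\mathcal{M}/\mathcal{G}'\to 0$ into the commutative double $\SExt$ spectral sequence, compare it via the filtered identifications with the spectral sequence of (1) for $\mathcal{M}$, and use purity of $\mathcal{M}$ together with (2); the higher-codimension class carried by $\mathcal{G}'$ would then survive to yield a nonzero subquotient of the biduality object $\SSExt^j_T(\SSExt^j_S(\mathcal{M},\mathcal{X}),\mathcal{X})\cong\mathcal{M}$ concentrated in grade $>j$, contradicting both the irreducibility of $\mathcal{M}$ and the inequality $\codim_X\supp\gr_F\SSExt^q_S(\mathcal{M},\mathcal{X})\geq q$. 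Hence $\gr_F\mathcal{M}$ is $j$-pure over $\mathcal{O}_X$, so $\Char(M)=\supp\gr_F\mathcal{M}$ is equidimensional.

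The main obstacle is the filtered comparison lemma underlying everything above: showing, in the quotient category $S\lqgr$, that $\SSExt^\bullet_S(-,\mathcal{X})$ admits good filtrations and that $\gr_F\SSHom_S(P_\bullet,\mathcal{X})$ computes $\mathbf{R}\SHom_{\mathcal{O}_X}(\gr_F\mathcal{M},\omega_X)$ in a manner compatible with long exact sequences and with both double-$\Ext$ spectral sequences. This is precisely where Hypotheses (H1)--(H3) and Notation~\ref{filt-notation} do their work --- in particular the identification $\gr_F\mathcal{X}\cong\omega_X$, which rests on $X$ being Gorenstein (H4) --- and once it is established, the passage from purity of $\mathcal{M}$ to purity of $\gr_F\mathcal{M}$, and hence to equidimensionality of $\Char(M)$, is the formal Auslander--Gorenstein/Gabber formalism carried verbatim from unital rings into $S\lqgr$.
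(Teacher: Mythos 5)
Your overall architecture is the same as the paper's (filtered projective resolutions of objects of $S\lqgr$, a comparison spectral sequence from $\SExt^{\bullet}_X(\gr_F\MM,-)$ to $\gr_F\SSExt^{\bullet}_S(\MM,\XX)$, a Bj\"ork-style double complex giving the double Ext spectral sequence, and then Gabber/Levasseur), but two load-bearing steps are missing or mis-aimed. First, the biduality that makes part (1) converge to $\MM$ does \emph{not} follow from ``lifting the commutative duality through the filtration.'' The filtered comparison only shows that $\gr_F\SSExt^i_S(\MM,\XX)$ is a subquotient of $\SExt^i_X(\gr_F\MM,\mathcal{O}_X)$; to run the double complex one needs, separately, that the duals $\mathscr{S}_\nu^{\vee}$ of the projective generators are acyclic \emph{and} reflexive, $\mathscr{S}_\nu^{\vee\vee}\cong\mathscr{S}_\nu$. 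The reflexivity is proved by writing down the multiplication map $S_{\gamma,\nu}\to\Hom_{T_{\phi,\phi}}(Z,\,S_{\gamma,\nu}\otimes_{S_{\nu,\nu}}Z)$ with $Z=S_{\nu,0}\otimes_{S_{0,0}}T_{\phi,0}$ and showing its associated graded is an isomorphism; this reduces, via (H3), to $\End_{R_0}(R_{\nu})=R_0$, which is exactly where the normality of $\Spec R_0$ and the birationality of $X\to\Spec R_0$ in (H4) enter. Your proposal never constructs this map and never uses normality/birationality; moreover your identification $\gr_F\mathcal{X}\cong\omega_X$ is not what the hypotheses give (H3 yields $\gr_F\XX_\gamma=\mathcal{O}_X(\gamma)$, so the commutative comparison is with $\SExt_X(-,\mathcal{O}_X)$, Gorensteinness being used through Bass's local results, not through a twist by the dualizing sheaf).

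Second, in part (3) your Gabber step is not a proof. Purity ($s$-homogeneity) of $\MM$ does not pass to $\gr_F\MM$, so choosing a pure subsheaf $\mathcal{G}'\subseteq\gr_F\MM$ of higher codimension and asserting that its class ``survives'' to a subquotient of $\MM$ begs the question. What the argument actually needs is a \emph{non-vanishing} statement: if $\VV(\mathfrak{p})$ is an irreducible component of $\Char\MM$ of codimension $t$, then after localizing at the minimal prime $\mathfrak{p}$ the comparison spectral sequence degenerates ($E_1=E_\infty$ at $\mathfrak{p}$, because all other $\Ext$'s over the local Gorenstein ring vanish at a minimal prime of the support), so $\VV(\mathfrak{p})$ genuinely occurs in $\Char\SSExt^{t}_S(\MM,\XX)$ and, applying the same argument on the $T$-side, in $\Char\,\EE^{t,t}(\MM)$; then the filtration $\RR_{\bullet}(\MM)$ from the double Ext theorem forces a nonzero subobject $\RR_{d-t}(\MM)$ of $\MM$ with $\chdim\leq d-t<s$, contradicting homogeneity. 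The subquotient comparison you invoke only bounds these characteristic varieties from above and cannot produce the contradiction; similarly, your claim $j(\MM)=\codim\Char\MM$ needs the double-Ext filtration for the inequality $j(\MM)\geq d-\chdim\MM$'s converse, not just the vanishing bound from the commutative side.
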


  \subsection{}\label{intro-5}  
 Let us explain some of the undefined terms in this theorem, and the significance of the hypotheses (H1--H4). First, 
 $\SSExt$
 is the   analogue for   $S\lqgr$ of sheaf Ext   and is defined in Definition~\ref{ext-defn}.
 Part (2) of the theorem is then the natural analogue  of the Auslander-Gorenstein condition for $\mathbb{Z}$-algebras.  
 Given the earlier discussions, conditions (H1), (H2) and (H4) are natural and so it is only the requirement of an auxiliary 
 algebra $T$ and the description of $\mathcal{X}$ that require explanation.
 
 The problem comes  in generalizing $\Ext^p(M,U)$. One choice is to consider 
 $\mathcal{A}=\SSExt^p(\mathcal{M}, S)=\bigoplus_n \Ext^p_S(\mathcal{M},S_{\ast,n})$. There are two problems here. First,   one really wants to take the image of this module in a quotient category ${qgr}$. However, finitely generated right $S$-modules are bounded and so 
 $\rqgr S=0$ (see \ref{lackofsymmetry2})! Thus one does need an auxiliary algebra $T$. For example,  $\mathcal{A}$ is   naturally a  right module over the \emph{upper triangular} $\mathbb{Z}$-algebra $S^*=\bigoplus_{0\leq q\leq p}S_{p,q}^*$, where  $S_{p,q}^*=\Hom_{S_{q,q}}(S_{p,q},\, S_{q,q})$.   
 For more technical reasons  $T=S^*$ does not work; basically because  
 one would then need  $\gr(S_{p,q}^*)\otimes \gr(S_{p,q})=\gr(S_{q,q})$ for $p\geq q$, and this essentially never holds. 
 Fortunately, 
 in all the standard examples there is a natural candidate for $T$. For example given  
 the  Cherednik algebra  as  in \ref{intro-25}, where  $S$ is defined in terms of the sequence of algebras
  $S_{p,p}=U_{\lambda+p}$,  the algebra $T$ is defined by moving in the ``opposite'' direction: $T_{p,p}=U_{\lambda-p}^{op}$.  
  The module $\mathcal{X}$ is now the image in $ (S\otimes T)\lqgr$ of the 
  natural $S\otimes T$-bimodule $S_{\ast,0}\otimes_{S_{0,0}}T_{\ast,0}$.

  \subsection{}\label{examples-intro}
The hypotheses (H1--H4) hold for the example of rational Cherednik algebras and Hilbert schemes, thanks 
mostly to \cite{GGS}. In Proposition~\ref{smpl-hold},  we show that they also hold for the  $\Z$-algebras constructed in \cite{BPW}. 
These algebras are attached to $\C^*$-equivariant symplectic resolutions of singularities $X\rightarrow Y$, where $Y$ is an irreducible affine symplectic singularity with a $\C^*$-action that both attracts to a unique fixed point and scales the given symplectic form $\omega$ by $t\cdot \omega = t^m \omega$ for all $t\in \C^*$ and some $m>0$. Examples include:
\begin{itemize}
\item The minimal resolution  $\mathfrak{M}\to \mathfrak{M}_0=\mathbb{C}^2/\Gamma$  for a finite subgroup
 $\Gamma\subseteq SL(2,\mathbb{C})$ of type $A$;
\item  $X= {\sf Hilb}^n(\mathfrak{M})$ and $ Y ={\sf Sym}^n(\mathfrak{M}_0)$ with $\mathfrak{M}$ and $\mathfrak{M}_0$ as above;
\item The Springer resolution $T^*(G/B) \rightarrow \mathcal{N}$. More generally, one takes  $X=T^*(G/P)$, where $P$ is a parabolic subgroup of the reductive algebraic group $G$ and $Y$ is the
 affinization  of $X$;
 \item Various Nakajima quiver varieties and their affinizations.
\end{itemize}
In these examples, the $\Z$-algebra is constructed from a $\C^*$-equivariant deformation quantization of $X$ depending on $\lambda\in H^2(X,\C)$ and a very ample line bundle $\mathcal{L}$ on $X$. The characteristic variety can be interpreted, in this case, in terms of the deformation quantization.

  \subsection{}\label{intro-55}  
  It may be worth noting that we  do not   require   $X\rightarrow \Spec(R_0)$ in (H4) to be  a resolution of singularities. So it  makes sense to try to apply  Theorem~\ref{intro-4} in the case which arises from the minimal model programme, namely when the morphism $X\rightarrow Y$ is crepant with $X$ $\mathbb{Q}$-factorial and terminal.

\subsection{Outline of the paper.}\label{outline} The natural generality for the results of this paper are for algebras indexed by $\mathbb{Z}^n\times\mathbb{Z}^n$ for $n\geq 1$. Following \cite{Gi} we call them  \emph{directed algebras}; this generality has appeared for   \cite{Gi} and \cite{Mus} and has potential applications for other, more general symplectic reflection algebras. 
The essentials of directed algebras,
  combined with basic results about their Ext groups, filtrations and associated graded modules are given in Sections~\ref{section-directed}, \ref{homtech} 
  and \ref{filtration-section}. The analogue of dualizing and the spectral sequence relating the 
  cohomology of a  module to that of its associated graded module appears in Section~\ref{section-dualising}.
  This is used in Section~\ref{section-spectral} to prove parts (1) and (2) of Theorem~\ref{intro-4}.
  The  analogue of Gabber's  theorem,  Theorem~\ref{intro-4}(3), is then proved for general directed algebras in Section~\ref{section-main}. 
The application of the theorem to a number of different quantizations of symplectic singularities, including Cherednik algebras,
 is given in Section~\ref{symplectic-sect}.  
 
\clearpage 

\section{Directed algebras.}\label{section-directed}

\subsection{} \label{2.1}  As  remarked in the introduction,  the results in  this paper work for more than just 
 $\mathbb{Z}$-algebras,  and so  in this section we provide the relevant definitions and basic results.

Let  
  $\Lambdaplus $  be a submonoid of  a finitely generated free additive abelian group $\Lambdawide$ such that $\Lambdaplus$
   generates $\Lambdawide$ and
$\Lambdaplus\cap \, - \Lambdaplus = \{0\}$.  For $\lambda , \mu \in \Lambdawide$ 
we write $\lambda \geq \mu$ if $\lambda - \mu \in \Lambdaplus$. Note that if 
$\mu_1, \ldots , \mu_n \in \Lambdawide$ 
then there exists $\lambda\in \Lambdaplus$ such that $\lambda \geq \mu_i$ for each $i$, namely $\lambda = \sum \mu_i$.

\begin{definition}  
A {\it $\Lambdaplus$-directed algebra} \index{(Upper, lower) $\Lambda$-directed algebra} is a $\Lambdaplus$-bigraded $\kk$-algebra  
$$S = \bigoplus_{\lambda, \mu \in \Lambdaplus} S_{\lambda , \mu}$$ such that  each $S_{\lambda, \lambda}$ is a $\kk$-algebra 
and multiplication is defined matrix style:  $S_{\lambda, \mu} S_{\mu , \tau} \subseteq S_{\lambda , \tau}$ and  
$S_{\lambda, \mu} S_{\nu , \tau} =0$ for $\mu\not=\nu$.
We further assume that each $S_{\lambda,\lambda}$ is unital, with unit  $1_{\lambda}$,
  and that each $S_{\lambda,\mu} $ is finitely generated and unital as a left module over $S_{\lambda,\lambda}$ and as a right module over $S_{\mu,\mu}$.

 The  $\Lambdaplus$-directed algebra $S$  is  called a  {\it lower} $\Lambdaplus$-directed algebra if $S_{\lambda , \mu} \neq 0$ 
only if $\lambda \geq \mu$, respectively an   {\it upper} $\Lambdaplus$-directed algebra if $S_{\lambda , \mu} \neq 0$ 
only if $\lambda \leq \mu$. \end{definition}

\subsection{} {\bf Examples.}\label{Ex2.2}
There are three examples of  $\Lambdaplus$-directed algebras that will interest us.

\begin{enumerate} 
\item Let $R = \bigoplus_{\lambda\in\Lambdaplus} R_{\lambda}$ be a $\Lambdaplus$-graded $\kk$-algebra. Set $R_\lambda = 0$ for  
$\lambda\in \Lambdawide\smallsetminus \Lambdaplus$. Define  a lower $\Lambdaplus$-directed algebra $\tR$ by \index{$\tR$, the directed algebra of a $\Gamma$-graded algebra}
$\tR = \bigoplus_{\lambda,\mu\in \Lambda} \tR_{\lambda, \mu}$ where
$\tR_{\lambda, \mu} = R_{\lambda - \mu}$.  
 
\item  If $S$ is an upper $\Lambdaplus$-directed algebra, then its ``transpose''  $S^{\op}$ is a lower $\Lambdaplus$-directed algebra. Formally  
$S^{\op} = \bigoplus (S^{\op})_{\lambda, \mu}$ where    $(S^{\op})_{\lambda, \mu} = S_{\mu, \lambda}$, with the opposite multiplication.
 \item If  $S$ is lower $\Lambdaplus$-directed   and $T$ is lower $\Gammaplus$-directed, then $S\otimes T$ is 
lower $(\Lambdaplus\times \Gammaplus)$-directed. 
\end{enumerate}

In this paper we will only consider upper and lower  directed algebras. By (2)   it 
suffices to consider only the latter case.  
\medskip

\begin{boxcode}For the  rest of this section we assume that   
$S$ is a  lower $\Lambdaplus$-directed algebra.\end{boxcode}

\subsection{Definition} \label{graded-defn}
A \emph{graded (left) $S$-module}\index{$S\lGr$, the category of graded $S$-modules}
is a left $S$-module  ${\sf{M}} = \bigoplus_{\lambda \in \Lambdaplus} {\sf M}_{\lambda}$ with
 matrix style multiplication $S_{\lambda,\mu}{\sf M}_\nu=0$ for $\nu\not=\mu$ and  $S_{\lambda,\mu}{\sf M}_\mu\subseteq {\sf M}_\lambda$.
 Each ${\sf M}_{\lambda}$ is assumed to be a unital left $S_{\lambda,\lambda}$-module. 
 
 The 
 category of all such modules will be denoted $S\lGr$,  where the morphisms are the homogeneous $S$-homomorphisms. 
 The category of graded right modules is denoted $\rGr S$.
\smallskip

 If $S=\tR$ as in Example~\ref{Ex2.2}(1), then $\tR\lGr$ is the category of $\Lambdaplus$-graded $R$-modules. 
\subsection{}\label{hommumjum}    The category $S\lGr$ admits direct limits and these direct limits preserve exactness. 
Moreover the category has a set of distinguished objects parametrised by $\Lambdaplus$,
\begin{equation} \label{projgen}
 {\sf S}_{\ast, \lambda}\  := \  \bigoplus_{\tau\in\Lambdaplus } S_{\tau, \lambda}
\ = \ S\cdot 1_\lambda.
\end{equation}
\index{ ${\sf S}_{\ast, \lambda}$, distinguished objects in $S\lGr$} The set   $\{{\sf S}_{\ast,\lambda} : \lambda \in \Lambdaplus\}$   generates the category $S\lGr$: if  
${\sf{M}} = \bigoplus_{\lambda \in \Lambdaplus} {\sf M}_{\lambda}   \in S\lGr$ and $m\in {\sf M}_\lambda$, then
 $1_\lambda\mapsto m$ induces a unique homomorphism ${\sf S}_{\ast,\lambda}\to {\sf M}$.
 It follows from \cite[Chapitre~II, \S 6, Th\'eor\`eme~2]{Ga} that every object of $S\lGr$ has an injective hull.

 \begin{definition}\label{projgen1}\index{$S\lgr$, the category of noetherian graded $S$-modules}
 $S$ is  called {\it locally left noetherian} if ${\sf S}_{\ast,\lambda}$ is noetherian in $S\lGr$ for all $\lambda\in \Lambdaplus$.  The full subcategory of noetherian objects in $S\lGr$ will be denoted by $S\lgr$.
  \end{definition} 
  
  If $R$ is a    $\Lambdaplus$-graded algebra, then $\tR_{\ast,\lambda}  = R[-\lambda]$ where $R[-\lambda]_{\tau}= R_{\tau - \lambda}$ for all $\tau$. So $\tR$ is locally left noetherian   if and only if $R$ is  left noetherian. In general, when $S$
   is locally left noetherian, $S\lgr$ is the  subcategory of finitely generated modules.

\subsection{}\label{lackofsymmetry}  As    for unital graded algebras, it is natural to consider the quotient category 
  of graded noetherian modules modulo the Serre  subcategory of torsion modules.

\begin{definition} An object ${\sf M} \in S\lgr$ is called {\it torsion} if there exists $\lambda \in \Lambdaplus$ such that  ${\sf M}_{\tau} = 0$  
for all $\tau \in \lambda+ \Lambdaplus$. An object ${\sf M}$ of $S\lGr$ is {\it torsion} if it is
 the direct limit of noetherian torsion objects. We denote the corresponding full subcategories by $S\ltors$ and $S\lTors$ 
 respectively.
\end{definition}

The category $S\ltors$ is a Serre subcategory of $S\lgr$ and so we can form the quotient category 
$S\lqgr = S\lgr/S\ltors$. If $S$ is
 locally left noetherian (which is the main case of interest in this paper) then $S\lTors$ is  a localising subcategory of $S\lGr$,
  \cite[Chapitre~III, \S 3, Corollaire~1]{Ga} and so we
  can form the quotient category $S\lQGr = S\lGr/S\lTors$. By  \cite[Chapitre~III, \S1\&2]{Ga},
   $S\lQGr$ \index{$S\lQGr$, the category of quasicoherent sheaves attached to $S$}has an  exact quotient functor 
  $\pi_S: S\lGr \to S\lQGr$ whose right adjoint is  the section functor  $\sigma_S: S\lQGr \to S\lGr$.
 
\subsection{Vorsicht!}\label{lackofsymmetry2}
There is a substantial lack of symmetry in   concepts from the last two subsections. For example,
if $T$ is lower $\N$-directed  then every  graded  noetherian \emph{right} $T$-module is  torsion.

\subsection{Shifting} \label{shiftlemma} Let $S$ be lower $\Lambdaplus$-directed. For $\lambda\in \Lambdaplus$  
we define a new lower $\Lambdaplus$-directed algebra by
 $$S_{\geq \lambda} := \bigoplus_{\mu, \tau\in\Lambda} S_{\mu+\lambda, \tau+\lambda}.$$ We 
 have shift functors
  $$[\lambda] : S\lGr \longrightarrow S_{\geq \lambda} \lGr , \qquad [-\lambda] : S_{\geq \lambda}\lGr \longrightarrow S \lGr$$ 
  defined as follows. Given ${\sf M}\in S\lGr$, set 
$ {\sf M}[\lambda] = \bigoplus_{\tau\in \Lambdaplus}  {\sf M}[\lambda]_{\tau}$ where ${\sf M}[\lambda]_{\tau} = {\sf M}_{\lambda + \tau}$,
 while if  ${\sf N}\in S_{\geq \lambda}\lGr$ set ${\sf N}[-\lambda] = \bigoplus_{\tau\in \Lambdaplus}  {\sf N}[-\lambda]_{\tau}$ where 
${\sf N}[-\lambda]_{\tau} = {\sf N}_{\tau - \lambda}$  if $\tau\geq \lambda$ and is zero otherwise. It is immediate that 
${\sf N}[-\lambda][\lambda] = {\sf N}$, whilst there is a short exact sequence 
$$0 \longrightarrow {\sf M}[\lambda][-\lambda] \longrightarrow {\sf M} \longrightarrow {\sf C} \longrightarrow 0$$
 where   ${\sf C}_{\tau} = 0$ for all $\tau\geq \lambda$  and so ${\sf C}\in  S\lTors$.  This has the following useful
  consequences.

\begin{lemma}  Let $S$ be a locally left noetherian lower $\Lambdaplus$-directed algebra. 
\begin{enumerate}
  \item[(i)] The functors $[\lambda]$ and $[-\lambda]$ induce equivalences of categories 
between  $S \lQGr$ and
 $S_{\geq\lambda}\lQGr$  and hence between  the noetherian subcategories $S\lqgr$ and $S_{\geq\lambda}\lqgr$.
 
\item[(ii)] Suppose that there exists $\lambda \in \Lambdaplus$ such that $S_{\geq \lambda} = \tR$ for some  $\Lambdaplus$-graded commutative algebra $R$. Then  $S\lQGr$ and $R\lQGr$ are equivalent categories. 
 \qed
 \end{enumerate}
\end{lemma}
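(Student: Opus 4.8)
The plan is to prove the lemma in two steps, mirroring the structure of the statement.

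\textbf{Part (i).} First I would observe that the shift functors $[\lambda]$ and $[-\lambda]$ are exact and, as noted just before the lemma, satisfy $\mathsf{N}[-\lambda][\lambda] = \mathsf{N}$ for all $\mathsf{N}\in S_{\geq\lambda}\lGr$, while for $\mathsf{M}\in S\lGr$ the short exact sequence
$$0 \longrightarrow \mathsf{M}[\lambda][-\lambda] \longrightarrow \mathsf{M} \longrightarrow \mathsf{C} \longrightarrow 0$$
has cokernel $\mathsf{C}\in S\lTors$. Thus the composite $[\lambda]$ followed by $[-\lambda]$ is naturally isomorphic to the identity modulo torsion. To make this precise at the level of quotient categories, I would check: (a) $[\lambda]$ sends $S\lTors$ into $S_{\geq\lambda}\lTors$ and $[-\lambda]$ sends $S_{\geq\lambda}\lTors$ into $S\lTors$ --- both are clear from the definition of torsion, since the relevant vanishing of graded pieces is simply translated; and (b) therefore both functors descend to the quotient categories $S\lQGr$ and $S_{\geq\lambda}\lQGr$. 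The identity $\mathsf{N}[-\lambda][\lambda]=\mathsf{N}$ shows one composite is literally the identity on $S_{\geq\lambda}\lQGr$, and the short exact sequence above shows the other composite agrees with the identity on $S\lQGr$ after applying $\pi_S$ (the torsion cokernel becomes zero). Hence the induced functors are mutually inverse equivalences. Since both functors are exact and preserve finite generation (again immediate from the translation of graded pieces), they restrict to equivalences between the noetherian subcategories $S\lqgr$ and $S_{\geq\lambda}\lqgr$.

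\textbf{Part (ii).} Given $\lambda$ with $S_{\geq\lambda}=\tR$, part (i) yields $S\lQGr \simeq \tR\lQGr$. It remains to identify $\tR\lQGr$ with $R\lQGr$. Here I would use Example~\ref{Ex2.2}(1) together with the remark in Definition~\ref{graded-defn} that $\tR\lGr$ is precisely the category of $\Lambdaplus$-graded $R$-modules, and the computation $\tR_{\ast,\mu}=R[-\mu]$ from \ref{projgen1}: the distinguished generating objects of $\tR\lGr$ are exactly the shifts of $R$. The torsion objects on the $\tR$ side (those with $\mathsf{M}_\tau=0$ for all $\tau\in\mu+\Lambdaplus$) match the torsion $\Lambdaplus$-graded $R$-modules in the usual sense, so the Serre subcategories coincide and therefore so do the quotient categories $\tR\lQGr$ and $R\lQGr$. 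Composing the two equivalences gives the result.

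\textbf{Main obstacle.} The routine part is the bookkeeping of graded pieces; the only point requiring a little care is verifying that the functors $[\lambda],[-\lambda]$ genuinely descend to the quotient categories and that the natural transformation coming from the displayed short exact sequence becomes an isomorphism after $\pi_S$ --- i.e. that the composite $[\lambda][-\lambda]$ on $S\lGr$, which is only ``identity up to a torsion subobject and the section functor'' is, on $S\lQGr$, naturally isomorphic to the identity. This is standard localization-theory yoga (using that $\pi_S$ kills $S\lTors$ and is exact, and that $S_{\geq\lambda}\lTors$ is localizing when $S$ is locally left noetherian), but it is the one place where one must be slightly attentive rather than purely mechanical.
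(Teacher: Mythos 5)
Your proposal is correct and follows exactly the route the paper intends: the lemma is stated with no separate proof precisely because the identities ${\sf N}[-\lambda][\lambda]={\sf N}$ and the short exact sequence with torsion cokernel ${\sf C}$, established immediately beforehand in \ref{shiftlemma}, give the equivalences once one notes (as you do) that the shift functors are exact, preserve torsion, and hence descend to the quotient categories, with the identification $\tR\lGr = R\lGr$ handling part (ii). No discrepancies with the paper's (implicit) argument.
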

 
\subsection{Morita directed algebras}
Let $S$ be a lower $\Lambdaplus$-directed algebra. 

\begin{definition}\label{good-defn}
We call $\lambda\in \Lambdaplus$ a {\it good parameter} \index{Good parameter for a directed algebra}for $S$ and 
 $S_{\geq \lambda}$ a {\it Morita $\Lambdaplus$-directed algebra}
provided:
\begin{enumerate}
\item $S_{\lambda,\lambda}$ is a noetherian ring; 
\item for all $\mu\geq \lambda$,    tensoring with the 
$(S_{\mu, \mu}, S_{\lambda, \lambda})$-bimodule $S_{\mu,\lambda}$ induces a Morita equivalence 
 between $S_{\lambda, \lambda}\lMod$ and
   $S_{\mu, \mu}\lMod$;
    \item  for any
  $\mu\geq \tau \geq \lambda$
  the multiplication map $S_{\mu, \tau}\otimes_{S_{\tau,\tau}} S_{\tau, \lambda} \longrightarrow S_{\mu, \lambda}$ is an isomorphism. 
  \end{enumerate}
 \end{definition} 
    A routine application of Morita theory shows that if $\lambda$ is good then 
    so is any $\mu \geq \lambda$. 
If $\lambda $ is a good parameter and  $\mu\geq \lambda$ we write
 $S_{\mu,\lambda}^* = \Hom_{S_{\lambda,\lambda}}(S_{\mu,\lambda},\,S_{\lambda,\lambda})$ for the dual of $S_{\mu,\lambda}$. 
 By hypothesis it is an $( S_{\lambda,\lambda},\, S_{\mu,\mu})$-bimodule that is projective on both sides.

\subsection{}  A mild generalisation of  \cite[Lemma~5.5]{GS2} or \cite[Theorem~12]{Boy} gives:

\begin{proposition}\label{corner-ring} 
Suppose that  $S$ is a locally left noetherian lower $\Lambdaplus$-directed algebra and suppose that 
$\lambda$  is good. Then there exists an equivalence of categories $\Psi: S_{\lambda, \lambda}\lMod 
   \buildrel{\sim}\over\longrightarrow  S\lQGr$    given by
   ${\sf M}\mapsto \pi_S\left(\big(\bigoplus_{\tau} S_{\tau+\lambda, \lambda}\otimes_{S_{\lambda,\lambda}} {\sf M}\big)[-\lambda]\right).$ 
 It restricts to an equivalence between  $S_{\lambda, \lambda}\lmod$ and $S\lqgr$.
\end{proposition}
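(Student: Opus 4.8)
The plan is to reduce to the standard Morita context attached to the ``corner'' ring $S_{\lambda,\lambda}$, following the model of \cite[Lemma~5.5]{GS2} but keeping track of the $\Lambdaplus$-grading. By Lemma~\ref{shiftlemma}(i) the shift functors $[\lambda], [-\lambda]$ give an equivalence $S\lQGr \simeq S_{\geq\lambda}\lQGr$ and restrict to $S\lqgr\simeq S_{\geq\lambda}\lqgr$, so it suffices to prove the statement with $S$ replaced by the Morita $\Lambdaplus$-directed algebra $S_{\geq\lambda}$; after this reduction we may assume $\lambda = 0$, that $S_{0,0}$ is noetherian, that each $S_{\mu,0}$ induces a Morita equivalence $S_{0,0}\lMod \xrightarrow{\sim} S_{\mu,\mu}\lMod$, and that the multiplication maps $S_{\mu,\tau}\otimes_{S_{\tau,\tau}} S_{\tau,0}\to S_{\mu,0}$ are isomorphisms for $\mu\geq\tau\geq 0$. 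With this normalisation the candidate functor is $\Psi_0:\ {\sf M}\mapsto \pi_S\bigl(\bigoplus_\tau S_{\tau,0}\otimes_{S_{0,0}}{\sf M}\bigr)$, landing in $S\lGr$ because condition (3) of Definition~\ref{good-defn} makes $\bigoplus_\tau S_{\tau,0}\otimes_{S_{0,0}}{\sf M}$ into a genuine graded $S$-module (the $S_{\mu,\tau}$-action on the $\tau$-component is $s\otimes({\sf s}\otimes m)\mapsto (s{\sf s})\otimes m$, well-defined by the isomorphism in (3)). Since $\pi_S$ and $\otimes_{S_{0,0}}$ (the latter because $S_{\tau,0}$ is projective as a right $S_{0,0}$-module, by the Morita hypothesis) are exact, $\Psi_0$ is exact.

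Next I would exhibit a quasi-inverse. The natural candidate is ${\sf N}\mapsto (\sigma_S\pi_S{\sf N})_0$, i.e. take the degree-$0$ component of the section module; equivalently one can use $\Hom_{S\lGr}({\sf S}_{\ast,0}, -)$ composed with $\sigma_S\pi_S$, since ${\sf S}_{\ast,0} = S\cdot 1_0$ represents the degree-$0$-component functor and by \ref{hommumjum} it is one of the generating projectives. So set $\Theta: S\lQGr \to S_{0,0}\lMod$, $\mathcal{N}\mapsto \Hom_{S\lQGr}(\pi_S{\sf S}_{\ast,0}, \mathcal{N})$, an $S_{0,0}$-module via the $S_{0,0} = \End_{S\lGr}({\sf S}_{\ast,0})^{\mathrm{op}}$-action — one checks $\End_{S\lGr}({\sf S}_{\ast,0}) = (S_{0,0})^{\mathrm{op}}$ from matrix-style multiplication, since a graded endomorphism of $S\cdot 1_0$ is right multiplication by an element of $1_0 S 1_0 = S_{0,0}$. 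The unit ${\sf M}\to \Theta\Psi_0({\sf M})$ is an isomorphism because $\Hom_{S\lQGr}(\pi_S{\sf S}_{\ast,0}, \Psi_0{\sf M}) = \Hom_{S\lGr}({\sf S}_{\ast,0}, \sigma_S\pi_S(\bigoplus_\tau S_{\tau,0}\otimes{\sf M}))$; using that ${\sf S}_{\ast,0}$ is noetherian (local left noetherianity) and that the cokernel of ${\sf N}\to\sigma_S\pi_S{\sf N}$ is torsion, hence vanishes in degree $0$ once we are far enough — here in all degrees $\geq 0$ by the torsion definition — this computes the degree-$0$ part, which is $S_{0,0}\otimes_{S_{0,0}}{\sf M} = {\sf M}$. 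For the counit $\Psi_0\Theta(\mathcal{N})\to\mathcal{N}$: both sides are exact functors of $\mathcal{N}$, the claim for $\mathcal{N} = \pi_S{\sf S}_{\ast,\mu}$ follows since $\Theta(\pi_S{\sf S}_{\ast,\mu}) = \Hom_{S\lGr}({\sf S}_{\ast,0},\sigma_S\pi_S{\sf S}_{\ast,\mu}) = S_{\mu,0}$ (again the torsion cokernel is killed in degree $0$) and then $\Psi_0(S_{\mu,0}) = \pi_S(\bigoplus_\tau S_{\tau,0}\otimes_{S_{0,0}}S_{\mu,0})\cong\pi_S{\sf S}_{\ast,\mu}$ by condition (3) of Definition~\ref{good-defn} applied with $\tau$-component $S_{\tau,0}\otimes_{S_{0,0}}S_{\mu,0}\xrightarrow{\sim} S_{\mu,0}$... — more carefully, using that ${\sf S}_{\ast,\mu}$ and $S_{\mu,0}\otimes_{S_{0,0}}{\sf S}_{\ast,0}$ agree up to torsion via (3) — and since the $\pi_S{\sf S}_{\ast,\mu}$ generate $S\lQGr$ and every object of $S\lqgr$ has a presentation by finite sums of them, the counit is an isomorphism everywhere. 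This gives the equivalence $\Psi_0: S_{0,0}\lMod\xrightarrow{\sim} S\lQGr$.

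Finally, the restriction statement: $\Psi_0$ is exact and commutes with direct limits (it is a left adjoint, being $-\otimes_{S_{0,0}}\bigoplus_\tau S_{\tau,0}$ followed by $\pi_S$), and $\Theta$ likewise commutes with direct limits because ${\sf S}_{\ast,0}$ is noetherian in $S\lGr$ (so $\Hom_{S\lQGr}(\pi_S{\sf S}_{\ast,0},-)$ preserves filtered colimits). A noetherian object in either category is a quotient of a finitely generated free-ish object, and the equivalence matches ${\sf S}_{\ast,\mu}$ (finitely generated generators of $S\lqgr$) with the finitely generated projective $S_{0,0}$-modules $S_{\mu,0}$; chasing a finite presentation through the equivalence shows $\Psi_0$ and $\Theta$ exchange $S_{0,0}\lmod$ and $S\lqgr$. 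Unwinding the reduction, the functor on the original $S$ is ${\sf M}\mapsto \pi_S\bigl((\bigoplus_\tau S_{\tau+\lambda,\lambda}\otimes_{S_{\lambda,\lambda}}{\sf M})[-\lambda]\bigr)$, as claimed.

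The main obstacle I anticipate is verifying that the unit and counit of the adjunction $(\Psi_0,\Theta)$ are isomorphisms \emph{at the level of $\mathsf{Qgr}$}, i.e. correctly handling the section functor $\sigma_S$ and the torsion subcategory: one must check that $\sigma_S\pi_S{\sf N}$ agrees with ${\sf N}$ in all degrees $\geq 0$ (not merely in sufficiently large degrees, which is the usual unital statement) so that the degree-$0$ computation is clean. This is where local left noetherianity and the precise definition of torsion (${\sf M}_\tau = 0$ for all $\tau\in\lambda+\Lambdaplus$) are essential, and it is the step where the directed-algebra bookkeeping genuinely differs from the classical $\mathbb{Z}$-algebra case; everything else is a routine transcription of Morita theory.
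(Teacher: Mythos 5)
Your skeleton (shift by $[\lambda]$ via Lemma~\ref{shiftlemma} to reduce to a Morita directed algebra, then establish the corner equivalence) is the same as the paper's, but the paper delegates the core step to \cite[Lemma~5.5]{GS2} and \cite[Theorem~12]{Boy} (noting only the trivial extension from $\mathbb{N}$ to $\Lambdaplus$ and the passage from noetherian objects to all of $S_{\lambda,\lambda}\lMod$ via direct limits), whereas you attempt it directly, and your direct attempt has a genuine gap at precisely the point you flag as the main obstacle. Your computation of the unit rests on the assertion that the torsion kernel and cokernel of ${\sf N}\to\sigma_S\pi_S{\sf N}$ vanish ``in all degrees $\geq 0$ by the torsion definition''. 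That is false: by the definition in \ref{lackofsymmetry}, torsion means vanishing in degrees $\geq\lambda$ for \emph{some} $\lambda\in\Lambdaplus$, and a module concentrated in degree $0$ is torsion. Consequently $\Theta\Psi_0({\sf M})=\bigl(\sigma_S\pi_S({\sf S}_{\ast,0}\otimes_{S_{0,0}}{\sf M})\bigr)_0$ is \emph{not} the naive degree-zero component ${\sf M}$ for formal reasons; identifying it with ${\sf M}$ is exactly where the Morita hypotheses must enter. The honest argument computes $\Hom_{S\lQGr}(\pi_S{\sf S}_{\ast,0},-)$ by Gabriel's colimit formula in high degrees: using conditions (2) and (3) of Definition~\ref{good-defn} one identifies it with $S^*_{\mu,0}\otimes_{S_{\mu,\mu}}(-)_\mu$ for $\mu\gg0$ (cf.\ Remark~\ref{2.8.1}), and then $S^*_{\mu,0}\otimes_{S_{\mu,\mu}}S_{\mu,0}\otimes_{S_{0,0}}{\sf M}\cong{\sf M}$ by the progenerator property. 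This is the content of the proof of \cite[Lemma~5.5]{GS2}, which the paper cites rather than reproves; the same flaw affects your evaluation $\Theta(\pi_S{\sf S}_{\ast,\mu})=S_{\mu,0}$.

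A secondary, related problem: to propagate the counit isomorphism from the objects $\pi_S{\sf S}_{\ast,\mu}$ to all of $S\lQGr$ you assert that $\Psi_0\Theta$ is exact and then chase presentations; this requires exactness of $\Theta=\Hom_{S\lQGr}(\pi_S{\sf S}_{\ast,0},-)$, i.e.\ projectivity of $\pi_S{\sf S}_{\ast,0}$ in $S\lQGr$. In the paper that projectivity is Proposition~\ref{projective}(1), whose proof \emph{uses} Proposition~\ref{corner-ring}, so you would have to establish it independently from the Morita conditions to avoid circularity (it fails without them: for $S=\tR$ it amounts to projectivity of $\mathcal{O}_X(-\lambda)$ in $\coh(X)$). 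Once the unit/counit computations are redone in high degree as above, the remainder of your argument -- the shift reduction and the direct-limit argument for restricting to $S_{\lambda,\lambda}\lmod$ and $S\lqgr$ -- does match the paper's proof.
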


\begin{proof}  
We first claim that  the functor 
$\Psi':{\sf M}\mapsto \bigoplus_{\tau}  S_{\tau+\lambda, \lambda}\otimes_{S_{\lambda,\lambda}} {\sf M}$ provides  an
 equivalence $ S_{\lambda, \lambda}\lMod  \buildrel{\sim}\over{\longrightarrow}\ S_{\geq \lambda  } \lQGr $.
   This  follows from 
   the  cited references, after the following minor modifications.  First, they only deal with the
    $\mathbb{N}$-directed case, but the proof extends trivially. Second, those papers only prove the 
    result at the level of noetherian objects. This  implies the general case  since the functor is
     defined on all modules and, by the Morita equivalence,  it commutes with direct limits.

 The equivalence $ S_{\geq \lambda  } \lQGr  \buildrel{\sim}\over{\longrightarrow}\  S\lQGr$ and hence the proposition now follow from 
  Lemma~\ref{shiftlemma}.
   \end{proof}
 
 \subsection{Remark}\label{2.8.1} The functor $\Phi$ 
 inverse to $\Psi$, is easy to describe for  noetherian objects. 
  Specifically, if $\mathcal{M}=\pi_S\bigl(\bigoplus_{\mu\in \Lambda} {\sf M}_\mu\bigr) \in S\lqgr$
 then $\Phi(\mathcal{M}) = S^*_{\mu,\lambda}\otimes {\sf M}_\mu\in S_{\lambda,\lambda}\lmod$ for any $\mu\gg \lambda$. 
The details can be found in   \cite[Lemma~5.5]{GS2}.

\section{Homological notions.} 
\label{homtech}
  
  \medskip
\begin{boxcode}
In this  section we assume  that $S$,   $T$ and  $S\otimes T$ are, respectively,   $\Lambda$-directed,
  $\Gamma$-directed and  $(\Lambda \times \Gamma)$-directed.  Each is assumed to be  locally left noetherian.
\end{boxcode}
  \bigskip
  
\subsection{}  Here, we introduce several homological concepts which are to be used throughout  the paper.  
 In particular, we will find the appropriate analogues for directed algebras of a locally free sheaf and of the dual 
functor  $ \Hom_A(-,A)$  over a unital ring $A$.

\subsection{Definition.} \index{Ext groups for $S\lQGr$}  \label{ext-defn} (i)
Let ${\sf N} \in (S\otimes T)\lGr$ and   $\gamma\in\Gamma$. 
Mimicking \eqref{projgen},    set
 $${\sf N}_{\ast, \gamma} = \bigoplus_{\lambda \in \Lambdaplus} {\sf N}_{\lambda, \gamma}\in S\lGr.$$

(ii) For  $i\geq 0$, define a  functor $\SSExt^{\,^i}_{S\lQGr} ( - ,{\sf N}):  S\lQGr \to T\lQGr$  by 
\begin{equation}\label{ext-defn2}
\SSExt^{\,^i}_{S\lQGr} (\mathcal{M} ,{\sf N}) \ = \  \pi_T\bigg(
\,   \bigoplus_{\gamma\in \Gammaplus} 
\Ext^i_{S\lQGr}\bigl(\mathcal{M}, \pi_S ({\sf N}_{\ast,\gamma}) \bigr)\bigg)
\qquad \text{ for 
$\mathcal{M}\in S\lQGr$} .\end{equation}

To see that part (ii) of  this definition makes sense,    use  \ref{hommumjum} to   pick  an injective resolution 
${\sf I}^{\bullet}$ of  ${\sf N}_{\ast,\gamma}$. By \cite[Corollaire~2, p.375]{Ga} $\pi_S({\sf I}^{\bullet})$ is an injective
 resolution of $\pi_S ({\sf N}_{\ast,\gamma})$ and so this can be used to calculate the Ext-groups in question. Moreover for any 
 $t\in T_{\gamma_1, \gamma_2}$ left multiplication by $t$ induces  a morphism 
 $t\,\cdot : {\sf N}_{\ast,\gamma_2} \to {\sf N}_{\ast,\gamma_1}$   in $S\lGr$ 
 and hence  in $S\lQGr$. This  morphism  provides the direct sum  in \eqref{ext-defn2} 
 with the required  structure  of a graded left $T$-module.

\subsection{} \label{agreement}
We can modify the above definition, replacing ${\sf N}\in S\otimes T\lGr$ with $\mathcal{N}\in S\otimes T\lQGr$, 
by setting    \index{Ext groups for $S\lQGr$} 
 \begin{equation}\label{ext-defn3}\SSExt^{\,^i}_{S\lQGr} (\mathcal M, \mathcal{N}) \  = \ 
 \SSExt^{\,^i}_{S\lQGr} ( \mathcal M, \sigma_{S\otimes T}(\mathcal{N}) )
   \qquad\text{for\ }  \mathcal M \in S\lQGr.\end{equation}
  In this 
definition note that the section functor $\sigma_{S\otimes T}: S\otimes T\lQGr \rightarrow S\otimes T \lGr$ is not necessarily right exact, just as proper pushforward for quasi-coherent sheaves is not. 

We need to check that this definition coincides with \eqref{ext-defn2} when $\mathcal{N}=\pi_{S\otimes T}(N)$, and 
$\mathcal M$ is noetherian.

\begin{lemma} Assume that $\mathcal{M}\in S\lqgr$. Then for all $i\geq 0$ and   ${\sf N}\in S\otimes T\lGr$ there exists 
 a natural   isomorphism in $N$ $$
\SSExt^{\,^i}_{S\lQGr} (\mathcal M, {\sf N})\ \cong \  \SSExt^{\,^i}_{S\lQGr} (\mathcal M, \pi_{S\otimes T}({\sf N})). $$
\end{lemma}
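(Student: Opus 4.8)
The plan is to compare the two constructions of $\SSExt^i$ by reducing to a comparison of the honest $\Ext$-groups that appear inside them. Recall that $\SSExt^{\,^i}_{S\lQGr}(\mathcal M, {\sf N}) = \pi_T\bigl(\bigoplus_{\gamma}\Ext^i_{S\lQGr}(\mathcal M, \pi_S({\sf N}_{\ast,\gamma}))\bigr)$, while by \eqref{ext-defn3} the right-hand side is $\SSExt^{\,^i}_{S\lQGr}(\mathcal M,\sigma_{S\otimes T}\pi_{S\otimes T}({\sf N})) = \pi_T\bigl(\bigoplus_\gamma \Ext^i_{S\lQGr}(\mathcal M, \pi_S((\sigma_{S\otimes T}\pi_{S\otimes T}{\sf N})_{\ast,\gamma}))\bigr)$. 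So the whole question is to show that, after applying $\Ext^i_{S\lQGr}(\mathcal M,-)$ and then $\pi_T$, the adjunction unit ${\sf N}\to \sigma_{S\otimes T}\pi_{S\otimes T}({\sf N})$ becomes an isomorphism. The standard fact here (from \cite{Ga}, Chapitre~III) is that this unit has kernel and cokernel lying in $(S\otimes T)\lTors$, since $\pi_{S\otimes T}$ kills exactly the torsion objects and $\sigma_{S\otimes T}$ is the right adjoint realizing the localization.

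First I would take the canonical four-term exact sequence $0\to {\sf K}\to {\sf N}\to \sigma_{S\otimes T}\pi_{S\otimes T}({\sf N})\to {\sf C}\to 0$ with ${\sf K},{\sf C}\in (S\otimes T)\lTors$, and split it into two short exact sequences. Then I would apply the functor $(-)_{\ast,\gamma}$, which is exact (it is just extraction of a $\Lambda$-graded summand of the underlying $S$-module), obtaining short exact sequences in $S\lGr$; crucially, $(-)_{\ast,\gamma}$ sends a $(S\otimes T)$-torsion module to an $S$-torsion module, since if ${\sf M}_{\lambda,\gamma}=0$ for all $(\lambda,\gamma)\geq(\lambda_0,\gamma_0)$ then in particular ${\sf M}_{\lambda,\gamma_0}=0$ for $\lambda\geq\lambda_0$. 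Next I would apply $\pi_S$ (exact) and then the long exact sequence of $\Ext^\bullet_{S\lQGr}(\mathcal M,-)$. Because $\mathcal M\in S\lqgr$ is noetherian, the key input is that $\Ext^i_{S\lQGr}(\mathcal M,\pi_S({\sf L}_{\ast,\gamma}))$ is a \emph{torsion} $T$-module whenever ${\sf L}$ is $(S\otimes T)$-torsion: the $T$-module structure comes from left multiplication $t\cdot:{\sf L}_{\ast,\gamma_2}\to{\sf L}_{\ast,\gamma_1}$, and for $\gamma_1,\gamma_2$ large enough (at least, for $\gamma_1$ in the torsion cone) these source/target modules vanish, so the $\gamma$-component of the direct sum is zero for $\gamma$ beyond the torsion bound. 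Hence $\pi_T$ annihilates these contributions.

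Putting this together: applying $\pi_T\bigl(\bigoplus_\gamma \Ext^i_{S\lQGr}(\mathcal M, \pi_S((-)_{\ast,\gamma}))\bigr)$ to the long exact sequences and using that the torsion pieces die under $\pi_T$, the connecting maps force $\pi_T\bigl(\bigoplus_\gamma\Ext^i_{S\lQGr}(\mathcal M,\pi_S({\sf N}_{\ast,\gamma}))\bigr)\to \pi_T\bigl(\bigoplus_\gamma\Ext^i_{S\lQGr}(\mathcal M,\pi_S((\sigma_{S\otimes T}\pi_{S\otimes T}{\sf N})_{\ast,\gamma}))\bigr)$ to be an isomorphism for every $i$. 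Naturality in ${\sf N}$ is automatic since every step (the adjunction unit, the functors $(-)_{\ast,\gamma}$, $\pi_S$, $\Ext$, $\pi_T$) is natural. I expect the main obstacle to be the bookkeeping around the $T$-module structure: one must check carefully that "$(S\otimes T)$-torsion implies the associated graded $T$-module $\bigoplus_\gamma\Ext^i(\mathcal M,\pi_S({\sf L}_{\ast,\gamma}))$ is $T$-torsion," i.e. that there is a single $\gamma_0\in\Gamma$ killing all high-degree components simultaneously; this follows from the definition of torsion (a single $(\lambda_0,\gamma_0)$ bound) but requires being attentive to how the two gradings interact, and to the fact that $\Ext^i_{S\lQGr}(\mathcal M,-)$ commutes with the direct sum over $\gamma$ because $\mathcal M$ is noetherian (as used already in Definition~\ref{ext-defn}).
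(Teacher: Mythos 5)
Your overall strategy matches the paper's: compare ${\sf N}$ with $\sigma_{S\otimes T}\pi_{S\otimes T}({\sf N})$ up to torsion and reduce, via the long exact sequences of $\SSExt^{\,i}_{S\lQGr}(\mathcal M,-)$, to the vanishing of $\SSExt^{\,i}_{S\lQGr}(\mathcal M,{\sf Z})$ for ${\sf Z}\in (S\otimes T)\lTors$. (The paper invokes Gabriel's ``isomorphism modulo torsion'' statement rather than the four-term sequence of the adjunction unit, but the two are interchangeable here, since $(S\otimes T)\lTors$ is localising under the standing hypotheses.) The genuine gap is in your proof of that vanishing: you assert that an $(S\otimes T)$-torsion object comes with a single bound $(\lambda_0,\gamma_0)$ beyond which all bigraded components vanish. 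That is the definition of torsion only for \emph{noetherian} objects; an object of the big category $(S\otimes T)\lTors$ is merely a direct limit of noetherian torsion objects, and the kernel and cokernel of the unit ${\sf N}\to\sigma_{S\otimes T}\pi_{S\otimes T}({\sf N})$ --- for an arbitrary ${\sf N}\in (S\otimes T)\lGr$, which is what the lemma allows, and a fortiori for $\sigma_{S\otimes T}\pi_{S\otimes T}({\sf N})$, which is rarely noetherian --- need not admit any uniform bound. So the step ``the $\gamma$-component is zero for $\gamma$ beyond the torsion bound'' fails as stated for the objects you actually need it for.

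The repair is exactly where the hypothesis $\mathcal M\in S\lqgr$ earns its keep, and it is not the use you propose. Write a torsion ${\sf Z}$ as $\lim_{\rightarrow}{\sf Z}(j)$ with each ${\sf Z}(j)$ noetherian torsion, bounded by some $(\lambda_j,\gamma_j)$; then $\pi_S$ and $\pi_T$ commute with direct limits, and because $\mathcal M$ is noetherian the functor $\Ext^i_{S\lQGr}(\mathcal M,-)$ commutes with the direct limit over $j$. After these commutations your bounded-case argument applies to each ${\sf Z}(j)$: for $\gamma\geq \gamma_j$ one has $\pi_S({\sf Z}(j)_{\ast,\gamma})=0$, so $\bigoplus_{\gamma}\Ext^i_{S\lQGr}(\mathcal M,\pi_S({\sf Z}(j)_{\ast,\gamma}))$ is a torsion $T$-module and is killed by $\pi_T$, whence the vanishing. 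Note that noetherianity of $\mathcal M$ is not needed to commute $\Ext$ with the direct sum over $\gamma$ --- that sum sits outside the $\Ext$ in the definition --- it is needed precisely for the direct limit over $j$. With this correction your argument closes and coincides with the paper's proof.
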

\begin{proof}
Let ${\sf Z}\in S\otimes T\lGr$ be torsion; thus  ${\sf Z} = \displaystyle \lim_{\rightarrow} {\sf Z}(j)$ where each 
${\sf Z}(j)\in S\otimes T\lgr$ has the property that there exists $(\lambda_j, \gamma_j)\in \Lambda\times \Gamma$ such that 
  ${\sf Z}(j)_{\alpha,\beta} = 0$  for all $\alpha\geq \lambda_j$ and $\beta\geq \gamma_j$. 
The  first step of the proof will be to show that 
\begin{equation} \label{vanhom} 
\SSExt^{\,^i}_{S\lQGr} (\mathcal{M} ,{\sf Z}) = 0
\qquad \text{ for any $i\geq 0$. }
\end{equation}
Well,
\begin{eqnarray*}    \SSExt^{\,^i}_{S\lQGr} (\mathcal{M} ,{\sf Z}) & \cong & \pi_T \bigg(
\,   \bigoplus_{\gamma\in \Gammaplus} 
\Ext^i_{S\lQGr}(\mathcal{M}, \pi_S \bigl({\sf Z}_{\ast,\gamma}) \bigr) \bigg) 
 \\  \noalign{\vskip 6pt} 
& \cong &
 \pi_T\bigg(
\,   \bigoplus_{\gamma\in \Gammaplus} 
\Ext^i_{S\lQGr}\bigl(\mathcal{M}, \lim_{\rightarrow}\pi_S ( {\sf Z}(j)_{\ast,\gamma}) \bigr)\bigg)
 \\  \noalign{\vskip 6pt} 
& \cong &
\pi_T \bigg(\, \,   \bigoplus_{\gamma\in \Gammaplus}  \lim_{\rightarrow}
\Ext^i_{S\lQGr}\bigl(\mathcal{M}, \pi_S ( {\sf Z}(j)_{\ast,\gamma}) \bigr)\bigg)
 \\  \noalign{\vskip 6pt} 
& \cong &
 \lim_{\rightarrow} \pi_T\bigg( \, \,   \bigoplus_{\gamma\in \Gammaplus} 
\Ext^i_{S\lQGr}\bigl(\mathcal{M}, \pi_S ( {\sf Z}(j)_{\ast,\gamma}) \bigr)\bigg).
\end{eqnarray*}
 Here the second and the last isomorphism hold since $\pi_S$ and $\pi_T$ commute with direct limits, \cite[p.378--9]{Ga},
 while the third isomorphism holds because $\mathcal{M}$ is noetherian. But for any $\gamma \geq \gamma_j$, the object
   ${\sf Z}(j)_{\ast,\gamma}$ is $S$-torsion since $({\sf Z}(j)_{\ast,\gamma})_{\lambda} = {\sf Z}(j)_{\lambda,\gamma} = 0$ for
    all $\lambda\geq \lambda_j$. Hence $\pi_S({\sf Z}(j)_{\ast,\gamma}) = 0$ and  the   $T$-module 
$\bigoplus_{\gamma\in \Gammaplus} 
\Ext^i_{S\lQGr}(\mathcal{M}, \pi_S ( {\sf Z}(j)_{\ast, \gamma}) )$ is torsion. Thus $\pi_T$ kills each such module  and 
  \eqref{vanhom}  is proven.

 Set ${\sf N}' = \sigma_{S\otimes T} (\pi_{S\otimes T}({\sf N}))$.  By  \cite[Proposition~3(2), p.371]{Ga}
   there   are graded $S\otimes T$-modules 
 ${\sf M}\subseteq {\sf N}$ and ${\sf M}'\subseteq {\sf N}'$ 
such that  (i) ${\sf M}$   and  
 ${\sf N}'/{\sf M}'$ are torsion,  and (ii) there exists
   an isomorphism $\psi: {\sf N}/{\sf M} \stackrel{\sim}{\longrightarrow} {\sf M}'$.
  Now apply the functor $\SSExt^{\,^i}_{S\lQGr} (\mathcal{{M}} , - )$
 to the  short exact sequences  
 $$
 0 \longrightarrow {\sf M}\longrightarrow {\sf N} \longrightarrow {\sf N}/{\sf M} \longrightarrow 0\quad \text{and}
  \quad 0\longrightarrow {\sf M}'\longrightarrow {\sf N}' \longrightarrow {\sf N}'/{\sf M}' \longrightarrow 0.
  $$
 Using  the fact that  $\pi_S$ and $\pi_T$ are exact, we get long exact sequences 
$$
\cdots \rightarrow \SSExt^{\,^i}_{S\lQGr} (\mathcal{M} , {\sf M} ) \rightarrow \SSExt^{\,^i}_{S\lQGr} (\mathcal{M} , {\sf N} )
 \rightarrow \SSExt^{\,^i}_{S\lQGr} (\mathcal{M} , {\sf N}/{\sf M}) \rightarrow \cdots 
 $$ 
 and 
 $$
 \cdots \rightarrow \SSExt^{\,^i}_{S\lQGr} (\mathcal{M} , {\sf M}' ) \rightarrow \SSExt^{\,^i}_{S\lQGr} (\mathcal{M} , {\sf N}' ) 
 \rightarrow \SSExt^{\,^i}_{S\lQGr} (\mathcal{M} , {\sf N}'/{\sf M}' ) \rightarrow \cdots .
 $$  
By \eqref{vanhom}, it follows that 
 $$
 \SSExt^{\,^i}_{S\lQGr} (\mathcal{M} , {\sf N}) \cong \SSExt^{\,^i}_{S\lQGr} (\mathcal{M} , {\sf N}/{\sf M} ) \quad \text{and} 
 \quad \SSExt^{\,^i}_{S\lQGr} (\mathcal{M} , {\sf M}') \cong \SSExt^{\,^i}_{S\lQGr} (\mathcal{M} , {\sf N}').
 $$ 
But   $\psi$ induces an isomorphism 
 $\SSExt^{\,^i}_{S\lQGr} (\mathcal{M} ,{\sf N}/ {\sf M} ) \cong \SSExt^{\,^i}_{S\lQGr} (\mathcal{M} , {\sf M}').$
Combining these  three isomorphisms gives the desired result. \end{proof}

\subsection{Acyclic sheaves}\label{acyclic-sect}
Assume that  $S_{00} = T_{00}^{\opp}$. We  define   
\begin{equation}\label{acyclic-defn} \index{$\mathcal{X}$, the dualizing sheaf}
\mathcal{X}_{S\otimes T}\ =\  \pi_{S\otimes T}({\sf S}_{\ast,0}\otimes_{S_{00}}  {\sf T}_{\ast 0} )
 =   \pi_{S\otimes T} \Bigl({\bigoplus}_{\lambda\in\Lambda, \gamma\in \Gamma} S_{\lambda , 0}\otimes_{S_{00}} 
 T_{\gamma, 0}\Bigr)\ \in \ S\otimes T\lQGr
 \end{equation} 
 and, for $\gamma \in \Gamma$, set
 \begin{equation}\label{acyclic-defn2}
 \XXgamma\ = \ \XX_{\ast,\gamma}\ =\ \pi_S(\bigoplus_{\nu}S_{\nu,0}\otimes_{S_{0,0}} T_{\gamma,0})\ \in\ S\lQGr.
 \end{equation}
  We will usually drop the subscript  from $\mathcal{X}_{S\otimes T} $  as it will be clear from the context.
 
  \begin{definition}\index{Acyclic sheaf in $S\lQGr$}
A module $\MM\in S\lqgr$ is called an \emph{acyclic sheaf}, or more strictly an $(S,T)$-acyclic sheaf,  if 
$\SSExt^{\,^i}_{S\lQGr}(\MM,\,\XX_{S\otimes T})=0$ for all $i>0$.   
 \end{definition}
 
 The significance  of this definition is  that the   dual 
object $\Hom_{S\lQGr}(\mathcal M,\pi_S(S))$ does not behave well over a lower $\Lambdaplus$-directed algebra $S$. 
Indeed,   \ref{lackofsymmetry2}  implies that   $S$  is a torsion   right $S$-module and so $\pi_S(S_S)=0.$
 This makes    $\Hom_{S\lQGr}(\mathcal M,\pi_S(S))$  useless 
in applications. As  will be seen, an appropriate replacement  is  the $T$-module $\mathcal{HOM}_{S\lQGr}(\MM,\XX_{S\otimes T})$, for a suitable directed algebra $T$. 
  
Now suppose that  $S = T = \tR$ for  a commutative $\Lambdaplus$-graded algebra $R$ for which   $X = \Proj(R)$ is smooth.
If we identify  $S\lqgr=\coh(X)$, then $\mathcal{X}$ is just
 $\iota_{\ast} \mathcal{O}_X$ under  the diagonal embedding  $\iota: X\longrightarrow X\times X$. 
Since $\mathcal{E}xt^j_X(\mathcal{M}, \mathcal{O}_X)$
 can be calculated on affine patches, it follows that in this case  an acyclic sheaf is just a vector bundle on $X$.

\subsection{Proposition}   
\label{projective}{\it  Assume  that
 $S_{00} = T_{00}^{\opp}$. Let $\lambda\in \Lambdaplus$ 
be a good  parameter for  $S$ and $\gamma\in\Gamma$  a good  parameter for   $T$.
\begin{enumerate}
\item
 $\mathscr{S}_{\lambda}=
 \pi_S({\sf S}_{\ast, \lambda}) $ is a projective object in $S \lQGr $. In particular, for all
 $\mathcal{N}\in S\lQGr$ and $i> 0$ we have 
$\Ext_{ S \lQGr }^i(\mathscr{S}_{\lambda},\, \mathcal{N})=0$  and so $\mathscr{S}_{\lambda}$ is an acyclic sheaf.

\item  
 $\XXgamma  \in S\lqgr$  for any $\gamma\in \Lambdaplus$. Consequently, 
  $\mathcal{X}$ is  noetherian as an object in  $(S\otimes T)\lQGr$.  
  \end{enumerate}}
\smallskip

\begin{proof} (1)
Under the equivalence of categories $S_{\lambda,\lambda}\lMod\buildrel{\sim}\over{\longrightarrow}  S_{\geq \lambda} \lQGr $
defined in the proof of Proposition~\ref{corner-ring}, the projective  object $S_{\lambda, \lambda}$ is sent to 
$\bigoplus_{\tau} S_{\tau +\lambda ,\lambda}.$ On shifting
 by $(-\lambda)$ we then get the object whose $\mu$th component is $S_{\mu,\lambda}$ if $\mu \geq \lambda$ and is
  zero otherwise. But on applying $\pi_S$ this is $\mathscr{S}_{\lambda}$.
  
  (2) By  Proposition~\ref{corner-ring}, again,  the  noetherianity  of  $ \XXgamma $  is equivalent
   to the noetherianity of the $S_{\lambda, \lambda}$-module $S_{\lambda,0}\otimes_{S_{0,0}} T_{\gamma,0}$. This is 
   noetherian since $T_{\gamma,0}$ is a finitely generated module over  $S_{0,0}=T_{0,0}^{\opp}$  and $S_{\lambda,0}$ is a finitely generated 
 left   $S_{\lambda,\lambda}$-module.   
 
  In order to prove that  that $\XX\in (S\otimes T)\lqgr$,  recall  that 
  $T_{\mu,\gamma}$ is a progenerator for all $\mu\geq  \gamma$  and hence 
   $\XXmu =  \pi_S(\bigoplus_{\alpha}S_{\alpha,0}\otimes_{S_{0,0}} T_{\mu,0})
   = \pi_S\bigl( T_{\mu,\gamma} \otimes_{T_{\gamma,\gamma}}\XXgamma\bigr)  $
    for all such $\mu$. Therefore, 
    $$\XX\  =\ \pi_{S\otimes T} \bigl( \bigoplus_\nu \XXnu \bigr)
    \  =\ \pi_{S\otimes T} \bigl( \bigoplus_{\mu\geq \gamma}  \XXmu \bigr)
    \ =\  \pi_{S\otimes T}  \bigl(T_{\geq \gamma} \,\XXgamma\bigr).
   $$
   By the above paragraph this is a noetherian object in $ S\otimes T\lQGr$. 
   \end{proof}
 
Further acyclic sheaves are provided by Theorem~\ref{S-vee2}.  
 

\section{Filtrations.}\label{filtration-section}

\subsection{}    In this section we discuss the basic properties of filtrations on directed algebras. 

\begin{notation}\label{filt-notation}
An  $\mathbb{N}$-filtration $F^{\bullet}S$ on a lower $\Lambda$-directed algebra $S$  will always be assumed 
to respect the graded structure of $S$. Thus $F^m(S)=\bigoplus_{\nu\geq \lambda } F^mS_{\nu,\lambda}$ for all $m\geq 0$.
 This ensures that the associated graded object $\gr_FS$ has an induced lower $\Lambdaplus$-directed algebra 
structure.

Suppose that $A=\bigcup F^iA$ is a filtered right  module
 and that $B=\bigcup F^iB$ is a filtered left  module over some algebra $U$. Then the \emph{tensor product filtration}\index{Tensor product filtration}
on the vector space $A\otimes_UB$ is defined by
$F^m(A\otimes_U B)=\sum_j F^jA\otimes_U F^{m-j}B$ for $m\in \mathbb N$.  Note that for a filtered algebra $U=\bigcup F^\bullet U$, 
the multiplication map $\mu:U\otimes U\to U$ is   automatically filtration preserving in the sense that 
$\mu\bigl(F^m(U\otimes U)\bigr)\subseteq F^mU$ when the left hand side is given the tensor product filtration.
\end{notation}
 
\subsection{Hypotheses}    \label{filthyp} 
Given  a lower $\Lambdaplus$-directed algebra $S$ with an $\mathbb{N}$-filtration $F^{\bullet}S$, we assume
\begin{enumerate}
\item[(H1)] There exists a good parameter $\lambda_S$ for $S$;  
\item[(H2)]  As $\Lambdaplus$-directed algebras, $\gr S\cong \tR$, where 
$R= \bigoplus_{\lambda\in\Lambdaplus} R_\lambda$ is a finitely 
generated commutative $\Lambdaplus$-graded domain
 satisfying $R_{\nu}\cdot R_{\nu'} = R_{\nu+\nu'} $  for all $ \nu, \nu'\in \Lambdaplus.$
\end{enumerate}
Given a second  $\Lambdaplus$-directed algebra $T$ satisfying $T_{00}^{\opp} = S_{00}$, we also assume
\begin{enumerate} 
\item[(H3)] Hypotheses (H1) and (H2) also hold for  $T$ with $\lambda_S = \lambda_T$. Furthermore, under the
 tensor product filtration,      
  $ \gr (S_{\tau,0}\otimes_{S_{0,0}} T_{\gamma,0}) = R_{\tau +\gamma}$  
   for each $\tau, \gamma\geq \lambda_S$.
  \end{enumerate} 
\medskip

Hypothesis (H2) ensures that $S$  is locally left noetherian by the comments following Definition~\ref{projgen1}. 
Similarly, the first part of Hypothesis (H3) ensures that   $T$ and  $S\otimes T$ are locally left noetherian. 
Therefore the underlying assumptions of Section~\ref{homtech} follow from these hypotheses. 

The final part of Hypothesis (H2)  
guarantees that the $n^{\text{th}}$ graded piece 
$ \gr_n( S_{\nu , \tau}) $ of $\gr S_{\nu,\tau}$  satisfies 
$ \gr_n( S_{\nu , \tau}) =\sum_{j\leq n} \gr_j(S_{\nu , \rho} )\gr_{n-j}(S_{\rho ,\tau})$
for each $n$ and $\nu\geq\rho\geq\tau\in \Lambdaplus$.
 Thus (H2) implies that  the multiplication map  $\mu$ is \emph{filtered-surjective} in the sense that under the tensor product filtration 
\begin{equation}\label{filthyp22}
 \mu(F^m(S_{\nu , \rho} \otimes_{S_{\rho,\rho}} S_{\rho ,\tau}))  \ = \ F^m( S_{\nu , \tau}) \qquad
\text{for each $m$ and $\nu\geq\rho\geq\tau\in \Lambdaplus.$}
\end{equation}

\subsection{} \label{sheaf-filt-defn}
Here are our conventions concerning filtrations on objects from  $\ttt{Qgr}$.  
\begin{definition}\index{A (good) filtration on $M\in S\lQGr$}
{\it A filtration on 
$\MM\in S\lQGr$} is a pair 
$({\sf M}, F^{\bullet})$ consisting of choice of a lift ${\sf M}\in S\lGr$ of $\MM$ and a $\mathbb{Z}$-filtration 
$F^{\bullet}$ on ${\sf M}$ 
that is separated and exhaustive. 
We typically 
abuse notation and denote such data by $F^{\bullet}\mathcal{M}$. A filtration on $\mathcal{M}$ gives rise to the 
quasi-coherent sheaf $\gr_F \mathcal{M} = \pi_{_{\tR}}(\gr {\sf M})$ on $X$.  A \emph{filtration preserving morphism} 
between $F^{\bullet}\MM$ and 
 $F^{\bullet}\mathcal{N}$ is just a filtration preserving morphism in $S\lGr$ between the given lifts ${\sf M}$ and ${\sf N}$.
It induces a morphism $\gr_F \MM \rightarrow \gr_F \mathcal{N}$ in $R\lQGr$. A {\it good filtration} on $\MM\in S\lQGr$
 is a filtration such that $\gr_F \MM \in R\lqgr$.
\end{definition}
Note that if $G^{\bullet}\mathcal{M}$ is another filtration of $\MM$ that agrees with $F^{\bullet}\MM$ in high degree, 
 then $\gr_F \MM = \gr_{G} \MM$. The existence of a good filtration implies that $\MM\in S\lqgr$; conversely, any object 
 of $S\lqgr$ can be given a good filtration, thanks to 
the combination of Proposition~\ref{corner-ring} and \cite[Lemma~2.5]{GS2}.

 \subsection{}\label{resolution-chat}
Assume that  $S$ satisfies Hypotheses (H1) and (H2) and write $X=\Proj R$. For $\lambda\geq \lambda_S$,
 recall the  module $\mathscr{S}_\lambda=\pi_S({\sf S}_{\ast, \lambda})$ from
Corollary~\ref{projective}.  Then  the given filtration $F^\bullet$  on $S$   provides induced filtrations, again written $F^\bullet$,
 on ${\sf S}_{\ast, \lambda}$ and 
$\mathscr{S}_\lambda$.   We have 
$\gr_F \mathscr{S}_{\lambda} \cong \pi_R(R[-\lambda]) = \mathcal{O}_X(-\lambda)$ as objects in $\tR\lqgr$.
 
\begin{definition}Let  $\MM\in S\lQGr$ have a good filtration $F^{\bullet}\mathcal{M}$. We will say that an exact sequence 
$\mathcal{P}^\bullet\to \MM\to 0$ is {\it a filtered projective resolution of $\MM$ in $S\lQGr$}\index{Filtered projective resolutions in $S\lQGr$}
 if the following hold.
\begin{enumerate}
\item For some  $\tau\geq \lambda_S$  each $\mathcal{P}^r$ can be written 
  $\mathcal{P}^r =\bigoplus_{1\leq j\leq n_r} \mathscr{S}_{\tau}\epsilon_{jr}\cong \mathscr{S}_\tau^{(n_r)}$ 
for some basis $\{\epsilon_{jr}\}$.
\item Give each ${\sf S}_{\ast,\tau}$ the filtration induced from $S$, and give  ${\sf S}_{\ast,\tau}\epsilon_{jr}$ 
 the induced filtration $G^\bullet$ by assigning $\epsilon_{jr} \in G_{k_{jr}}$ for some $k_{jr}$.
Then the resolution  $\mathcal{P}^\bullet\to \MM$ 
is filtered and the associated graded complex $\gr_G \mathcal{P}
^\bullet\to\gr_F\MM\to 0$ is 
exact.  
\end{enumerate}
\end{definition} 
 Note that in the second part of the definition
$\gr_G \mathcal{P}^r =\bigoplus_j \mathcal{O}_X(-\tau)\epsilon_{jr}$.
\subsection{}
We can form filtered projective resolutions in $S\lqgr$.

\begin{lemma}\label{resolutions}
Let $S$ satisfy Hypotheses (H1) and (H2)  and suppose that  $\MM\in S\lQGr$ has a good filtration
$({\sf M},F^\bullet)$. Then $\MM$ has a filtered projective resolution.
\end{lemma}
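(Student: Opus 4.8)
The plan is to build the resolution inductively, one free module at a time, mimicking the classical construction of a projective resolution but keeping track of filtrations throughout. The essential point is that the generating objects $\mathscr{S}_\tau$ (for a fixed good parameter $\tau \geq \lambda_S$) are projective in $S\lQGr$ by Proposition~\ref{projective}(1), and they generate the category in a strong enough sense: since $\MM \in S\lqgr$ corresponds under the equivalence $\Psi$ of Proposition~\ref{corner-ring} to a noetherian $S_{\tau,\tau}$-module, and $S_{\tau,\tau}$ is noetherian, $\MM$ is a quotient of a finite direct sum $\mathscr{S}_\tau^{(n)}$. So surjections from finite sums of $\mathscr{S}_\tau$'s always exist; the work is in making them \emph{filtered} and making the associated graded sequences exact.

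First I would handle the initial step. Given the good filtration $({\sf M}, F^\bullet)$, the sheaf $\gr_F\MM = \pi_R(\gr{\sf M}) \in R\lqgr$; choose finitely many homogeneous generators of $\gr{\sf M}$ as an $R$-module (up to torsion), lift each to an element of the appropriate filtration layer $F^{k_{j0}}{\sf M}$, and use these to define $\mathcal{P}^0 = \mathscr{S}_\tau^{(n_0)}$ with $\epsilon_{j0} \mapsto$ the chosen lift, placing $\epsilon_{j0}$ in filtration degree $k_{j0}$ so the map is filtration-preserving with the induced filtration $G^\bullet$ on $\mathcal{P}^0$. Because the images of the $\gr$-generators generate $\gr{\sf M}$ modulo torsion, the induced map $\gr_G\mathcal{P}^0 \to \gr_F\MM$ is surjective in $R\lqgr$; a standard filtered argument (exhaustiveness and separatedness of $F^\bullet$, together with surjectivity on associated graded) then shows $\mathcal{P}^0 \to \MM$ is itself an epimorphism in $S\lqgr$. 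Let $\mathcal{K}^0$ be its kernel; equip $\mathcal{K}^0$ with the subspace filtration inherited from $G^\bullet\mathcal{P}^0$. The key lemma to invoke here is that with this filtration $\gr_G\mathcal{K}^0 \hookrightarrow \gr_G\mathcal{P}^0$ and the sequence $0 \to \gr_G\mathcal{K}^0 \to \gr_G\mathcal{P}^0 \to \gr_F\MM \to 0$ is exact in $R\lqgr$ --- this is the usual fact that associated-graded of a short exact sequence of filtered modules is exact when one uses the subspace/quotient filtrations.

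Next I would iterate. The subspace filtration makes $\mathcal{K}^0$ a good-filtered object of $S\lqgr$, provided $\gr_G\mathcal{K}^0 \in R\lqgr$, which follows since $\gr_G\mathcal{P}^0 \cong \bigoplus_j \OO_X(-\tau)\epsilon_{j0}$ is coherent and $R\lqgr \simeq \coh(X)$ is noetherian. So the inductive hypothesis applies to $\mathcal{K}^0$: we obtain $\mathcal{P}^1 = \mathscr{S}_\tau^{(n_1)} \twoheadrightarrow \mathcal{K}^0$ filtered, with $\gr_G\mathcal{P}^1 \to \gr_G\mathcal{K}^0$ surjective. Composing $\mathcal{P}^1 \to \mathcal{K}^0 \hookrightarrow \mathcal{P}^0$ gives the next term of the resolution, and on associated graded we splice the short exact sequences to conclude that $\gr_G\mathcal{P}^1 \to \gr_G\mathcal{P}^0 \to \gr_F\MM \to 0$ is exact. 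Repeating, we produce the full complex $\mathcal{P}^\bullet \to \MM \to 0$ with each $\mathcal{P}^r$ a finite sum of copies of $\mathscr{S}_\tau$ carrying a shifted-by-$k_{jr}$ filtration, such that $\gr_G\mathcal{P}^\bullet \to \gr_F\MM \to 0$ is exact --- exactly the two conditions in the definition of a filtered projective resolution. One should note one uniformity point: condition (1) of the definition wants a \emph{single} $\tau$ for all $r$, and this is automatic since $\tau$ is fixed once and for all at the outset (any $\mu \geq \lambda_S$ works for every kernel, by Morita theory as remarked after Definition~\ref{good-defn}).

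\textbf{The main obstacle} I anticipate is not any single step but the bookkeeping needed to verify that the subspace filtration on each syzygy $\mathcal{K}^r$ simultaneously (a) is separated and exhaustive, so that $\mathcal{K}^r$ genuinely carries a \emph{good} filtration in the sense of Definition~\ref{sheaf-filt-defn}, and (b) makes the associated-graded sequence short exact, so the inductive hypothesis is legitimately reusable. Exhaustiveness is clear; separatedness requires a small argument using that the ambient filtration is separated and the subobject is closed in a suitable sense (or, more simply, that everything in sight is noetherian so one reduces to finitely generated $R$-modules where the relevant filtered exactness is classical). I would isolate this as a preliminary lemma on filtered objects in $S\lqgr$ --- essentially transporting the standard filtered-module facts across the equivalence $\Psi$ of Proposition~\ref{corner-ring} --- and then the inductive construction above goes through cleanly. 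Everything else (existence of surjections, projectivity of $\mathscr{S}_\tau$, coherence of the $\gr$'s) is already in hand from the earlier sections.
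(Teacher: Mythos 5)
There is a genuine gap, and it sits exactly at the point you dismiss as ``automatic by Morita theory''. The problem is the $\Lambda$-degree bookkeeping, not the filtration bookkeeping. A graded morphism ${\sf S}_{\ast,\tau}\to {\sf M}$ is determined by the image of $1_\tau$, which must lie in ${\sf M}_\tau$; equivalently, $\Hom_{S\lQGr}(\mathscr{S}_\tau,\MM)$ is (via Proposition~\ref{corner-ring} and Remark~\ref{2.8.1}) the ``degree $\tau$ part'' of $\MM$. So your first step already does not parse: your chosen generators of $\gr_F{\sf M}$ sit in various degrees $\tau_i$, and you cannot send $\epsilon_{j0}\in\mathscr{S}_\tau$ to a lift in ${\sf M}_{\tau_i}$ with $\tau_i\neq\tau$ --- filtration shifts $k_{j0}$ do not compensate for $\Lambda$-grading shifts. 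What is really needed, since $\gr_G\mathcal{P}^0=\bigoplus_j\OO_X(-\tau)\epsilon_{j0}$ carries a \emph{single} twist, is that $\gr_F{\sf M}$ be generated, up to torsion, by its degree-$\tau$ component. This is precisely where the paper truncates ${\sf M}$ to ${\sf M}_{\geq\tau}$ with $\tau\geq\lambda_S,\tau_1,\dots,\tau_n$ and uses the condition $R_\nu R_{\nu'}=R_{\nu+\nu'}$ from (H2) --- a hypothesis your argument never invokes --- to conclude that ${\sf M}_{\geq\tau}$ is generated by $N={\sf M}_\tau$ and indeed that ${\sf M}_{\geq\tau}\cong {\sf S}_{\ast,\tau}\otimes_{S_{\tau,\tau}}N$ as filtered modules.

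The same issue recurs, more seriously, at every syzygy, and there it is not repaired by truncation. Morita theory does give an epimorphism $\mathscr{S}_\tau^{(n_1)}\twoheadrightarrow\mathcal{K}^0$ in $S\lqgr$, but not one whose associated graded surjects onto $\gr_G\mathcal{K}^0$: for that you would need the graded $R$-submodule $\gr_G{\sf K}^0\subseteq\bigoplus_j R[-\tau]\epsilon_{j0}$ to be generated, up to torsion, by its degree-$\tau$ component, which is a statement about the $\Lambda$-grading of the syzygy and is neither automatic nor a consequence of the equivalence $\Psi$. The naive repair --- truncate the kernel and enlarge $\tau$ --- is forbidden by condition (1) of the definition in \ref{resolution-chat}, which demands one $\tau$ for all homological degrees, and there is no a priori bound on how much you would have to enlarge it along an infinite resolution. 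The paper's proof is architected to avoid exactly this: after the reduction above it builds the \emph{entire} filtered free resolution $Q^\bullet\to N\to 0$ over the unital noetherian filtered algebra $S_{\tau,\tau}$ (the proof of \cite[Lemma~2.5(i)]{GS2}), where no $\Lambda$-degree question arises, and only then induces up through ${\sf S}_{\ast,\tau}\otimes_{S_{\tau,\tau}}(-)$, getting exactness in $S\lqgr$ from the Morita equivalence and control of the associated graded from the filtered isomorphism ${\sf M}_{\geq\tau}\cong{\sf S}_{\ast,\tau}\otimes_{S_{\tau,\tau}}N$. The points you flag as the main obstacles (separatedness, exhaustiveness and goodness of the subspace filtration on the kernel, exactness of $\gr$ of a short exact sequence) are indeed the routine parts; the uniform degree-$\tau$ generation at every stage is the real obstruction, and as written your induction does not get past it.
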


\begin{proof} We will reduce the  problem to a case where we can apply \cite[Lemma~2.5(i)]{GS2}.   
It does no harm to replace   ${\sf M}$ 
by ${\sf M}_{\geq \tau}$, for any $\tau \geq \lambda_S$, so we do. Now, by
Proposition~\ref{corner-ring}, $S\lQGr   \simeq S_{\tau, \tau}\lMod$ and $S_{\geq \tau}$ is a Morita $\Lambdaplus$-algebra.

Let $\gr_F {\sf M}$ be generated by   $\sum_{i=1}^n\gr_F {\sf M}_{\tau_i}$ and pick
  $\tau \geq \lambda_S , \tau_1, \ldots , \tau_n$.   By (H2)    $\gr_F{\sf M}_{\geq \tau}$ is generated by $\gr {\sf M}_\tau,$  so
replacing $M$ by ${\sf M}_{\geq \tau}$   means that ${\sf M}$ is now  generated by $N={\sf M}_\tau$. 
The multiplication map 
$\mu_\nu: S_{\nu, \tau}\otimes_{S_{\tau, \tau}} N \rightarrow {\sf M}_{\nu}$ is therefore  surjective for all $\nu\geq \tau$. 
If it is not an  isomorphism for some $\nu$ then Ker$\,\mu_{\nu}\not=0$. But as each $S_{\alpha,\nu}$ is  
a progenerator, this would imply that 
 Ker$\,\mu_{\alpha}\not=0$ for all $\alpha\geq \nu$,     contradicting  Proposition~\ref{corner-ring}.  Hence each $\mu_\nu$ is 
 an isomorphism.

Let $F^{\bullet}N$ denote the induced filtration on $N$.  We claim that 
each $\mu_\nu$  is   a filtered isomorphism, where the domain is given the tensor product filtration $T^\bullet$ induced from
 the filtrations on $S_{\nu ,\tau}$ and  $N$ while the range is given the filtration $F^\bullet$ induced from that on ${\sf M}$.
To see this, note that  $\mu_\nu$ is  filtered by construction  and so  it is   a filtered injection. That it is a filtered surjection 
for all $\nu$  is equivalent to the fact that $\gr_F{\sf M}$ is generated by $\gr_FN$. This proves the claim.
To summarise,   replacing ${\sf M}$ by some ${\sf M}_{\geq \tau}$, we can  assume that its filtration is induced from that on  
$N={\sf M}_{\tau}$.

Set $U=S_{\tau, \tau}$ and apply the proof of  \cite[Lemma~2.5(i)]{GS2}. This constructs a free $U$-module 
$Q=\bigoplus U\epsilon_i$,   with a filtration $G^\bullet$ 
defined  by giving each $\epsilon_i$ some  degree $k_i\geq 0$, for which there exists  a filtered surjection 
$\phi: Q\twoheadrightarrow N$.  Set     $P(\nu)=S_{\nu,\tau}\otimes_UQ$ for $\nu\geq\tau$ and  
${\sf P} = {\sf S}_{\ast,\tau}\otimes_{U} Q = \bigoplus_{\nu\geq \tau} P(\nu)$
with the tensor product filtration $T^\bullet$.
As in [loc.cit] it follows that  
 $\gr_T {\sf P}\cong \bigoplus R\epsilon_i$ is a  
free $R$-module such that the induced map 
$\gr\phi : \gr_T{\sf P}\to \gr_F {\sf M}$ is also surjective.

Now repeat this procedure for $N'=\ker(\phi)$,  with the 
filtration induced from that of $Q$, and iteratively obtain a filtered resolution $Q^\bullet\to N\to 0$ 
that satisfies all  the usual properties of
filtered free resolutions of modules.  Note that, as we are working with 
$S_{\tau,\tau}$-modules here, the choice of $\tau$ stays fixed throughout this procedure.
Finally, using  Proposition~\ref{corner-ring}, one sees  that  
 ${\sf S}_{\ast, \tau}\otimes_{S_{\tau,\tau}}Q^\bullet\to {\sf S}_{\ast, \tau}\otimes_{S_{\tau,\tau}}{\sf M}_{\tau}\to 0$ is then the 
 desired filtered projective  resolution of ${\sf M}$.
\end{proof}

\subsection{} \label{resolutions2}
  {\bf Corollary.}  {\it 
Let $S$ and $T$  satisfy Hypotheses (H1--H3) and pick $\MM\in S\lqgr$ and $\mathcal{N}\in S\lQGr$.

\begin{enumerate}
\item The groups 
$\Ext^\bullet_{S\lQGr}(\MM,\mathcal{N})$    
can be computed as the cohomology groups 
 $\mathrm{RHom}_{S\lQGr}(\mathcal{P}^\bullet, \mathcal{N})$ for   any
filtered projective resolution $\mathcal{P}^\bullet\to\MM\to 0$. 

\item Define $\mathcal{X}
=\mathcal{X}_{S\otimes T}$ by  \eqref{acyclic-defn}.    
Then $$\SSExt_{S\lQGr}^\bullet(\MM,\,\XX) = \pi_T\Bigl(\bigoplus_{\lambda\in\Lambdaplus} 
\mathrm{RHom}_{S\lQGr}\bigl(\mathcal{P}^\bullet, \pi_S({\sf S}_{\ast, 0}\otimes_{S_{0,0}} T_{\lambda ,0})\bigr)\Bigr).$$
\end{enumerate}
}

\begin{proof}  (1)  Since everything can be computed in 
$S_{\tau,\tau}\lMod$ for  $\tau\gg 0$, this follows from Lemma~\ref{resolutions}. 

  (2) As the members of the projective resolution of $\mathcal{M}$ are noetherian, this follows from (1) combined with 
  Lemma~\ref{agreement}.
 \end{proof}


\section{Dualising and associated graded modules.} \label{section-dualising}
\medskip

\begin{boxcode} Assume throughout this section that $S$ and  $T$ are filtered, locally left noetherian $\Lambdaplus$-directed algebras that 
satisfy (H1--H3) from \ref{filthyp}. Fix objects $\MM,\NN \in S\lqgr$ with  good filtrations $({\sf M}, F^{\bullet})$, respectively
 $({\sf N}, G^{\bullet})$,  where  
 ${\sf M}, {\sf N}\in S\lgr$.  Set $X=\Proj(R)$.
\end{boxcode}

\bigskip

\subsection{}  

In this section    results on filtrations and associated graded modules for unital algebras are
 generalised to  directed algebras. This will culminate in a spectral sequence for the associated graded 
groups of Ext groups; see Proposition~\ref{spectral1} for the details.

\subsection{}  
Recall from \cite[p.365]{Ga} that  
 $\Hom_{S\lQGr}(\mathcal{M}, \mathcal{N}) =  
\displaystyle \lim_{\rightarrow} \, \Hom_{S\lGr} ({\sf M}' , {\sf N}/{\sf N}')$, where  ${\sf M}'\subseteq {\sf M}$ 
and ${\sf N}'\subseteq {\sf N}$ are submodules  for which 
${\sf M}/ {\sf M}'$   and ${\sf N}'$ are torsion. Since  ${\sf M}$ and ${\sf N}$ are both noetherian,  
  $({\sf N}/{\sf N}')_{\nu} = {\sf N}_{\nu}$ and ${\sf M}'_{\nu} = {\sf M}_{\nu}$ for $\nu\gg 0$ and so 
   any  $\theta\in \Hom_{S\lQGr}(\mathcal{M}, \mathcal{N})$ belongs to  
  $\Hom_{S\lGr}( {\sf M}_{\geq\nu} , {\sf N}_{\geq \nu})$ for $\nu\gg 0$. We therefore obtain a well-defined  filtration  
  $\Psi^{\bullet}$ on $\Hom_{S\lQGr}(\mathcal{M}, \mathcal{N})$   by 
\begin{equation}\label{homfil3}
\begin{array}{rl}
\Psi^m\ = \ \Psi^m \Hom_{S\lQGr}(\mathcal{M}, \mathcal{N})\  
=&  \{ \theta \in \Hom_{S\lGr}( {\sf M}_{\geq\nu} , {\sf N}_{\geq \nu})  \text{ such that }  \\
\noalign{\vskip 5pt}
& \qquad \theta(F^t{\sf M}_{\geq \nu}) \subseteq G^{t+m}{\sf N}_{\geq \nu}    \text{ for all } t\in\mathbb{Z} \text{ and  $\nu\gg 0$}\}.
 \end{array}\end{equation}

\subsection{Lemma}\label{homfil2}
  {\it 
The   filtration \eqref{homfil3} is  separated and exhaustive.
}
 
\begin{proof}  
For $\beta>\alpha\in\Lambda$ and $\theta\in \Hom({\sf M}_{\geq \alpha },{\sf N}_{\geq \alpha }) $, we will denote the 
image of $\theta$ in $\Hom({\sf M}_{\geq \beta},{\sf N}_{\geq \beta}) $ by $\theta$ too.
Given   $\alpha \in \Lambdaplus$, define the usual exhaustive filtration $\Psi^\bullet_\alpha $ on 
$\Hom_{S\lGr}({\sf M}_{\geq \alpha }, {\sf N}_{\geq \alpha })$ by 
$$\Psi^m_\alpha  
=\Big\{\theta \in \Hom({\sf M}_{\geq \alpha },{\sf N}_{\geq \alpha }) : \theta(F^t {\sf M}_\beta)\subseteq G^{t+m}{\sf N}_{\beta}
 \text{ for all $\beta \geq \alpha$   and $t\in \mathbb Z$}\Big\}.$$
Since $\Psi^m_\alpha\subseteq \Psi^m_{\beta}\subseteq \Psi^m$, for all $m$ and $\beta\geq \alpha $, 
 it follows that $\Psi$ is exhaustive. Thus it remains to prove  separability.

For $\theta\in \Hom({\sf M}_{\geq \alpha },{\sf N}_{\geq \alpha }) $ write 
$m_\alpha (\theta) = \min\{m : \theta\in \Psi_\alpha ^m\}$.
Since $G^\bullet $ is good, there exists $\alpha \in\Lambda$ such that  
$\gr_G({\sf N})_{\geq \alpha }= \gr_G({\sf N}_{\geq \alpha })$ 
has   zero torsion submodule;  $\text{Tors}(\gr_G({\sf N}_{\geq \alpha}))=0$. Clearly 
$m_\beta(\theta)\geq  m_{\gamma}(\theta)$  for any 
$\gamma>\beta\geq \alpha $ and  $\theta\in \Hom({\sf M}_{\geq \beta},{\sf N}_{\geq \beta}) $, so the result will follow if 
 $m_\beta(\theta)=  m_{\gamma}(\theta)$ always holds.

Suppose that 
$r=m_\beta(\theta)>m_{\gamma}(\theta)$ for some such  $\gamma, \beta  $ and  $\theta $. Then we can find some 
$\beta'\geq \beta$ 
and  $a\in F^u{\sf M}_{\beta'}$   such that 
$\theta(a)\in G^{u+r}{\sf N}_{\beta'}\smallsetminus G^{u+r-1}{\sf N}_{\beta'}$. Replace $\beta$ by $\beta'$ and $\gamma$ by 
$\gamma'=\max\{\beta',\gamma\}$; noting that we still have  $r=m_\beta(\theta)>m_{\gamma}(\theta)$. 
Clearly  $xa\in F^{u+p}{\sf M}_{\delta}$ for any $x\in F^pS_{\delta,\beta}\smallsetminus F^{p-1}S_{\delta,\beta}$
 with  $\delta\geq\gamma$ and  $p\in \Z$. Since  $m_{\delta}(\theta)\leq m_{\gamma}(\theta)\leq r-1$,  
it follows that  
 $x\theta(a)= \theta(xa) \in G^{u+p+r-1}{\sf N}_{\delta}.$   But this implies that the principal symbol $\sigma\theta(a)$ satisfies 
 $\sigma(x)\sigma\theta(a)= 0$. In other words,  we have $({\tR}_{\delta,\beta})\cdot\sigma(\theta(a)) = 0 $ for any $\delta \geq \gamma$.
Thus $\sigma\theta(a)\in \text{Tors}(\gr_G({\sf N}_{\geq \alpha}))=0$,   contradicting the choice of $\alpha$. 
This means that  $m_\gamma(\theta)=m_{\beta}(\theta)$ for all $\gamma>\beta\geq \alpha$ and hence that  $\Psi$ is separated.
\end{proof}

\subsection{}  
 Any 
$\theta\in \Psi^m \Hom_{S\lQGr} (\MM, \mathcal{N})$ induces a mapping from $F^i {\sf M}_{\nu}/F^{i+1}{\sf M}_{\nu}$ to
 $F^{i+m}{\sf N}_{\nu}/F^{i+1+m}{\sf N}_{\nu}$ for all large enough $\nu$ and hence defines  a homomorphism
  \begin{equation}\label{Xi-defn}
  \Theta_{\mathcal M,\mathcal N}: \gr_{\Psi} \Hom_{S\lQGr} (\MM, \mathcal{N}) \longrightarrow 
  \Hom_{\tR \lQGr} (\gr_F \MM, \gr_F \mathcal{N}) = \Hom_{X} (\gr_F \MM, \gr_F \mathcal{N}).
  \end{equation}
It follows from the definition of the filtration $\Psi$ that  $\Theta_{\mathcal M,\,\mathcal N}$  is injective. It is natural in 
both entries in the category of filtered objects.

\begin{lemma}\label{goodhom}  
Give $\mathscr{S}_{\nu} = \pi_S({\sf S}_{\ast, \nu})$ the filtration induced from $S$. For large enough $\nu$, depending 
on $\mathcal{N}$ and its filtration, the map
 $$ \Theta=\Theta_{\mathscr{S}_\nu,\,\mathcal N}: \gr_{\Psi} \Hom_{S\lQGr} (\mathscr{S}_{\nu}, 
 \mathcal{N}) \longrightarrow \Hom_{\tR \lQGr} (\gr_F \mathscr{S}_{\nu}, \gr_F \mathcal{N}) = 
 H^0(\gr_F \mathcal{N}\otimes \mathcal{O}_X(\nu))$$  
 is an isomorphism.
 \end{lemma}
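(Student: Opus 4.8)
The plan is to reduce the statement to a computation over a single corner ring $U=S_{\tau,\tau}$ for $\tau\gg 0$, where we can invoke the standard comparison between $\gr\Hom$ and $\Hom$ of associated graded modules for good filtrations over a filtered noetherian ring. First I would observe that $\Theta_{\mathscr{S}_\nu,\,\mathcal N}$ is always injective, as noted just before the lemma, so only surjectivity for large $\nu$ is at stake. Since $\mathscr{S}_\nu=\pi_S({\sf S}_{\ast,\nu})$ is projective in $S\lQGr$ by Corollary~\ref{projective}(1), and $\gr_F\mathscr{S}_\nu\cong\OO_X(-\nu)$ by \ref{resolution-chat}, the target is $H^0(\gr_F\mathcal N\otimes\OO_X(\nu))$ as stated. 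The key point is that maps out of $\mathscr{S}_\nu$ are detected on a single graded component: by the generation property \ref{hommumjum}, $\Hom_{S\lGr}({\sf S}_{\ast,\nu},{\sf N}_{\geq\alpha})$ (for $\alpha\leq\nu$) is naturally identified with ${\sf N}_\nu$ via $\theta\mapsto\theta(1_\nu)$, and after passing to $S\lQGr$ and to the filtration $\Psi^\bullet$ of \eqref{homfil3}, this identifies $\Psi^m\Hom_{S\lQGr}(\mathscr S_\nu,\mathcal N)$ with $F^{m+k_\nu}{\sf N}_\nu$ for the appropriate shift $k_\nu$ coming from the filtration of $1_\nu\in{\sf S}_{\ast,\nu}$. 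Hence $\gr_\Psi\Hom_{S\lQGr}(\mathscr S_\nu,\mathcal N)\cong\gr_F{\sf N}_\nu$ (up to degree shift by $\nu$).

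Next I would identify the target. Using the equivalence $\tR\lqgr\simeq\coh(X)$ and the presentation of $\gr_F\mathcal N$ as $\pi_{\tR}(\gr_G{\sf N})$, we have $\Hom_X(\OO_X(-\nu),\gr_F\mathcal N)=H^0(X,\gr_F\mathcal N\otimes\OO_X(\nu))$, and by the standard fact that for a finitely generated graded module $L$ over a finitely generated commutative graded algebra $R$ (here $L=\gr_G{\sf N}$, generated in bounded degree by (H2)) one has $H^0(X,\widetilde L(\nu))=L_\nu$ for all $\nu\gg 0$ — this is exactly where ``large enough $\nu$, depending on $\mathcal N$'' enters. So the target is $\bigl(\gr_G{\sf N}\bigr)_\nu=\gr_F({\sf N}_\nu)$ for $\nu\gg 0$, matching the source. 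It then remains to check that under these two identifications the map $\Theta$ is the identity on $\gr_F{\sf N}_\nu$; this is a diagram chase: $\theta\in\Psi^m$ corresponds to $n=\theta(1_\nu)\in F^{m+k_\nu}{\sf N}_\nu$, and $\Theta(\sigma\theta)$ is the induced map $\OO_X(-\nu)\to\gr_F\mathcal N$ sending the symbol of $1_\nu$ to $\sigma(n)$, which under the identification of $H^0$ with the degree-$\nu$ component is precisely $\sigma(n)$.

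An alternative, and perhaps cleaner, route is to work directly in $U\lmod$ via Proposition~\ref{corner-ring}: for $\tau\gg 0$ one has $S\lQGr\simeq U\lMod$ with $\mathscr S_\tau\mapsto U$, and filtered projective resolutions (Lemma~\ref{resolutions}) and the filtration $\Psi^\bullet$ correspond to the usual filtered data over $U$. Then the statement becomes the classical assertion that for a good filtration on a finitely generated $U$-module $N'$, the natural map $\gr_F\Hom_U(U,N')\to\Hom_{\gr U}(\gr U,\gr_F N')$ is an isomorphism — which is trivial since both sides are just $\gr_F N'$ — combined with the sheaf-cohomology vanishing $H^0(X,\gr_F\mathcal N(\nu))=(\gr_G{\sf N})_\nu$ for $\nu\gg0$ to translate back to $S\lQGr$ with the correct twist. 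I would only need to take $\nu$ large enough that simultaneously (a) the good filtration ${\sf N}$ has $\gr_G{\sf N}$ generated in degrees $\leq\nu$ relative to $X$-grading, and (b) $H^0(X,\gr_F\mathcal N(\nu))$ agrees with the module component.

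The main obstacle I anticipate is bookkeeping with the three interacting gradings/filtrations: the $\Lambdaplus$-grading of the directed algebra, the $\Z$-filtration $F^\bullet$, and the internal $\Lambdaplus$-grading on $R$ that gives the $\OO_X(\nu)$-twists. Getting the degree shifts right — in particular verifying that the filtration $\Psi^\bullet$ of \eqref{homfil3}, which is defined via an inverse limit over truncations ${\sf M}_{\geq\alpha}$, really does induce on $\Hom_{S\lGr}({\sf S}_{\ast,\nu},{\sf N})\cong{\sf N}_\nu$ exactly the shifted filtration $F^{\bullet+k_\nu}$ — requires care but no new ideas; Lemma~\ref{homfil2} already did the hard analytic work of showing $\Psi^\bullet$ is separated and exhaustive. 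The genuinely nontrivial input is the Serre-type vanishing $H^0(X,\gr_F\mathcal N(\nu))=(\gr_G{\sf N})_\nu$ for $\nu\gg0$, which holds because $R$ is a finitely generated commutative graded algebra and $\gr_G{\sf N}$ is a finitely generated graded $R$-module, so that the natural map from the module to the sections of its sheafification is an isomorphism in all sufficiently high degrees.
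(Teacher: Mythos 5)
Your strategy is the same as the paper's in outline: identify the target with $\gr_F {\sf N}_{\nu}$ for $\nu\gg 0$ using coherence of $\gr_F\mathcal{N}$ (the Serre-type statement that sections of $\gr_F\mathcal N\otimes\OO_X(\nu)$ are the degree-$\nu$ component in high degrees), relate the source to $\gr_F{\sf N}_\nu$ through evaluation at $1_\nu$, and use the injectivity of $\Theta$ recorded after \eqref{Xi-defn}. The gap is in your claimed identification of the source: the assertion that $\theta\mapsto\theta(1_\nu)$ identifies $\Psi^m\Hom_{S\lQGr}(\mathscr{S}_\nu,\mathcal N)$ with $F^{m+k_\nu}{\sf N}_\nu$ is precisely the nontrivial half of the lemma, and as stated it does not go through. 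Two concrete problems: (a) a morphism in $S\lQGr$ is only represented by a graded map on a tail $({\sf S}_{\ast,\nu})_{\geq\beta}$, so $\theta(1_\nu)$ is not defined until you show that every such morphism comes from a map on all of ${\sf S}_{\ast,\nu}$, i.e.\ that the natural map ${\sf N}_\nu\to\Hom_{S\lQGr}(\mathscr{S}_\nu,\mathcal N)\cong\Phi_\nu(\mathcal N)$ (Proposition~\ref{corner-ring}, Remark~\ref{2.8.1}) is bijective for $\nu\gg 0$; (b) membership in $\Psi^m$ in \eqref{homfil3} only constrains $\theta$ on components of sufficiently large degree, so even for a globally defined $\theta$ the containment $\theta(1_\nu)\in F^{m+k_\nu}{\sf N}_\nu$ can fail unless $\gr_F{\sf N}$ is torsion-free in degrees $\geq\nu$ — otherwise the symbol of $\theta(1_\nu)$ could be a torsion class killed by all high components of $R$, making your map out of $\gr_\Psi\Hom$ ill-defined. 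Both defects are repairable for $\nu\gg 0$ (torsion submodules of ${\sf N}$ and of $\gr_F{\sf N}$ are bounded, as exploited in Lemma~\ref{homfil2}), but you have not supplied these arguments, and they are the real content.

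The paper sidesteps both issues by running the easy direction only: for $x\in F^i{\sf N}_\nu$ the map $\chi:1_\nu\mapsto x$ visibly lies in $\Psi^i\Hom_{S\lQGr}(\mathscr{S}_\nu,\mathcal N)$, and the composite $\gr_F{\sf N}_\nu\to\gr_\Psi\Hom_{S\lQGr}(\mathscr{S}_\nu,\mathcal N)\to H^0(\gr_F\mathcal N\otimes\OO_X(\nu))=\gr_F{\sf N}_\nu$ is the identity, so $\Theta$ is surjective; injectivity then makes it an isomorphism (and only afterwards does one learn that your asserted identification holds). I recommend restructuring your argument this way, or else explicitly proving the saturation statement (a) and the torsion-freeness statement (b). Your alternative corner-ring route has the same soft spot in a different guise: under $S\lQGr\simeq S_{\tau,\tau}\lMod$ the relevant filtration on $\Hom$ is still $\Psi$, defined through high-degree behaviour in the directed algebra, so the "trivial" equality $\gr\Hom_U(U,N')=\gr_F N'$ is not the statement you need, and the equivalence also hides the twist by $\nu$, which is where the dependence of the bound on $\mathcal N$ lives.
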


\begin{proof} 
We have   seen in \ref{resolution-chat}  
that $\gr_F \mathscr{S}_{\nu} \cong \mathcal{O}_X(-\nu)$ as objects in $\tR\lQGr$ so the final equality in
 the displayed equation follows.  Since $\gr_F\mathcal{N}$ is coherent, 
$\gr_F\mathcal N\otimes \mathcal{O}_X(\nu) $ has global sections $\gr_F{\sf N}_\nu$  for all $\nu\gg 0$. 
Given $x\in F^i{\sf N}_{\nu}$ the map  $\chi:1_\nu \mapsto x$  defines an element of 
$\Psi^i \Hom_{S\lQGr}(\mathscr{S}_\nu ,\, \mathcal N)$.
Therefore, for $\nu\gg 0$ we have induced graded homomorphisms
$$\gr_F {\sf N}_{\nu} \ \buildrel{\gr \chi\, \,}\over\longrightarrow \ \gr_{\Psi} \Hom_{S\lQGr} (\mathscr{S}_{\nu}, \mathcal{N})
\ \buildrel{\Theta}\over\longrightarrow\
\Hom_{\tR\lQGr} (\gr_F \mathscr{S}_{\nu}, \gr_F \mathcal{N}) \ \buildrel{\cong}\over \longrightarrow\ \gr_F {\sf N}_{\nu}$$ whose 
composition is the  identity on $\gr_F {\sf N}_{\nu}$. It follows that $\Theta$ is surjective and so,  by the comments after
 \eqref{Xi-defn},  it is  an isomorphism.
\end{proof}

\subsection{\bf Definition.}  \label{homfil} \index{The dual $\mathcal{M}^{\vee}$ of $\mathcal{M}\in S\lQGr$}
Let $\mathcal{X}
=\mathcal{X}_{S\otimes T}$ be as defined in    \eqref{acyclic-defn} and recall $\lambda_S=\lambda_T$ from (H3).  Define
$$\MM^\vee\ =\  \SSExt^{0}_{S\lQGr}(\MM,\,\XX) \
\ = \ \pi_T \bigg(\bigoplus_{\gamma\in \Lambda} \Hom_{S\lQGr}(\MM,\,\XX_{\gamma})\bigg) \  \in\  T\lQGr.$$ 
 Lemma~\ref{agreement} provides an explicit lift of $\MM^\vee$ to $T\lGr$:  
 $\MM^\vee =\pi_T \MM^\ve$ where
$$ {\MM}^{\ve}  \ = \   
 \bigoplus_{\gamma\geq \lambda_T}\Hom_{S\lQGr}(\MM,\,\pi_S(\bigoplus_{\nu}S_{\nu,0}
\otimes_{S_{0,0}} T_{\gamma,0}))  \in T\lGr.$$
Symmetric definitions apply to $\MM\in T\lqgr.$
      
We will always give both $\XX$ and $\XXgamma $  the filtration induced from  
 the tensor product filtration on the summands 
 $S_{\nu,0}\otimes_{S_{0,0}} T_{\gamma,0} $.
Then \eqref{homfil3} induces a  filtration $\Psi^\bullet$ 
on ${\MM}^{\ve}$ by
\begin{equation}\label{4.4.1}
\begin{array}{rl}
\Psi^m{\MM}^{\ve}\ =\  \Big\{(\theta_\gamma)\in {\MM}^{\ve} : \theta_{\gamma} (F^tM_\nu)
\ \subseteq & F^{t+m}(S_{\nu,0}\otimes_{S_{0,0}} T_{\gamma,0}) :  \\ \noalign{\vskip 2pt}
&\text{for all } \  \gamma\geq \lambda_T, \, t\in \Z \text{ and } 
\nu\gg 0\Big\}.
\end{array}
\end{equation}
This induces a  filtration, again written $\Psi^\bullet$,  on $\mathcal{M}^\vee$.
We observe that, although $\XX\not\in S\lqgr$, any $(\theta_\gamma)\in \MM^\ve$ only has finitely many non-zero entries 
 and is therefore
contained in $\Hom (\MM,\mathcal{Y})$ for some $\mathcal{Y}\in S\lqgr.$ Therefore, Lemma~\ref{homfil2}
can be applied to give

\begin{lemma}\label{homfil22} The filtration \eqref{4.4.1} is separated and exhaustive. \qed\end{lemma}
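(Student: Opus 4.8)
The plan is to reduce Lemma~\ref{homfil22} directly to Lemma~\ref{homfil2} by exhibiting $\MM^\ve$, together with its filtration $\Psi^\bullet$ from \eqref{4.4.1}, as a sub-object of a $\Hom$-space to which Lemma~\ref{homfil2} applies. First I would fix the observation already made in the text: since each $(\theta_\gamma)\in\MM^\ve$ has only finitely many non-zero components, the whole module $\MM^\ve = \bigoplus_{\gamma\geq\lambda_T}\Hom_{S\lQGr}(\MM,\XXgamma)$ is the directed union of the finite partial sums $\bigoplus_{\lambda_T\leq\gamma\leq\delta}\Hom_{S\lQGr}(\MM,\XXgamma)$ over $\delta\in\Lambda$, and each such partial sum is $\Hom_{S\lQGr}(\MM,\mathcal{Y}_\delta)$ for $\mathcal{Y}_\delta=\bigoplus_{\lambda_T\leq\gamma\leq\delta}\XXgamma\in S\lqgr$ (noetherian by Corollary~\ref{projective}(2), being a finite sum of such). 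One must check that the filtration $\Psi^\bullet\MM^\ve$ restricts on $\Hom_{S\lQGr}(\MM,\mathcal{Y}_\delta)$ to precisely the filtration $\Psi^\bullet$ of \eqref{homfil3} attached to the pair $(\MM,\mathcal{Y}_\delta)$ with the good filtration on $\mathcal{Y}_\delta$ induced from the tensor-product filtrations on the summands $S_{\nu,0}\otimes_{S_{0,0}}T_{\gamma,0}$; this is immediate from comparing the two defining conditions, since the condition $\theta_\gamma(F^tM_\nu)\subseteq F^{t+m}(S_{\nu,0}\otimes_{S_{0,0}}T_{\gamma,0})$ in \eqref{4.4.1} is the same as $\theta(F^tM_\nu)\subseteq G^{t+m}(\mathcal{Y}_\delta)_\nu$ on each graded component, and the direct sum decomposes componentwise.

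With this identification in hand, exhaustiveness of $\Psi^\bullet\MM^\ve$ is formal: given $(\theta_\gamma)\in\MM^\ve$, choose $\delta$ large enough that all non-zero components have index $\leq\delta$; then $(\theta_\gamma)\in\Hom_{S\lQGr}(\MM,\mathcal{Y}_\delta)$, and by Lemma~\ref{homfil2} applied to $\mathcal{Y}_\delta$ there is some $m$ with $(\theta_\gamma)\in\Psi^m$. For separatedness, suppose $(\theta_\gamma)\in\bigcap_m\Psi^m\MM^\ve$ is non-zero; again fixing $\delta$ so that $(\theta_\gamma)\in\Hom_{S\lQGr}(\MM,\mathcal{Y}_\delta)$, the compatibility of filtrations from the previous paragraph shows $(\theta_\gamma)$ lies in every step of the $\Psi^\bullet$-filtration of $\Hom_{S\lQGr}(\MM,\mathcal{Y}_\delta)$, contradicting separatedness from Lemma~\ref{homfil2}. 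Thus $\Psi^\bullet\MM^\ve$ is separated. The induced filtration on $\mathcal{M}^\vee=\pi_T\MM^\ve$ is then separated and exhaustive because $\pi_T$ is exact and commutes with the directed unions involved, so a non-zero element of $\mathcal{M}^\vee$ lifts to a non-zero element of $\MM^\ve$ modulo torsion, on which the argument above applies.

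The only point requiring care — and the step I expect to be the main obstacle — is verifying that Lemma~\ref{homfil2} genuinely applies to each $\mathcal{Y}_\delta$, i.e.\ that the induced filtration on $\mathcal{Y}_\delta$ coming from the tensor-product filtrations on $S_{\nu,0}\otimes_{S_{0,0}}T_{\gamma,0}$ is a \emph{good} filtration in the sense of \ref{sheaf-filt-defn}. For this one invokes Hypothesis (H3): under the tensor-product filtration $\gr(S_{\tau,0}\otimes_{S_{0,0}}T_{\gamma,0})=R_{\tau+\gamma}$ for $\tau,\gamma\geq\lambda_S$, so $\gr_F\XXgamma\cong\pi_R(R[-\gamma])=\OO_X(-\gamma)\in R\lqgr$ (after passing to $S_{\geq\lambda_S}$ and using that this is a Morita $\Lambdaplus$-directed algebra, Proposition~\ref{corner-ring}), whence $\gr_F\mathcal{Y}_\delta=\bigoplus_{\lambda_T\leq\gamma\leq\delta}\OO_X(-\gamma)$ is coherent. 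One should also note that $\MM$ carries the given good filtration $F^\bullet$ throughout, and that Lemma~\ref{homfil2} was stated for a pair $(\MM,\NN)$ of noetherian objects with good filtrations, which is exactly the present setup with $\NN=\mathcal{Y}_\delta$. Once these bookkeeping points are settled, the reduction is clean and no further argument is needed; indeed the text already signals this by writing ``Lemma~\ref{homfil2} can be applied.''
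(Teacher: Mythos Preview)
Your argument is correct and is precisely the expansion of the paper's one-line justification preceding the lemma: an element $(\theta_\gamma)\in\MM^\ve$ has finitely many nonzero components, hence lies in $\Hom_{S\lQGr}(\MM,\mathcal{Y})$ for some finite partial sum $\mathcal{Y}\in S\lqgr$ carrying the (good) tensor-product filtration, and Lemma~\ref{homfil2} applies. Two small remarks: the sign in your identification should be $\gr_F\XXgamma\cong\OO_X(\gamma)$, not $\OO_X(-\gamma)$ (compare Corollary~\ref{easier2} and the proof of Proposition~\ref{spectral1}); and your final paragraph on $\mathcal{M}^\vee$ is superfluous, since by Definition~\ref{sheaf-filt-defn} a filtration on an object of $T\lQGr$ \emph{is} a separated, exhaustive filtration on a chosen lift, so the statement for $\MM^\ve$ is all that is claimed.
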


\subsection{} The assumption in (H1) that some large enough $\lambda_S$ is good, rather than all 
$\lambda\in \Lambdaplus$ are good, is necessary in applications. 
However, it does have the disadvantage that that we have less  control over  terms like $S_{\lambda_S,0}$. The 
next lemma will allow us to avoid these problems.  For $\nu\geq \lambda_S$, recall the definition of 
$S^*_{\nu,\,\lambda_S }$ from \ref{good-defn}.

\begin{lemma}\label{S-bar}
\begin{enumerate}
\item  ${\sf S}_{\mu,0}=S_{\mu,\nu}S_{\nu,0}\cong S_{\mu,\nu}
\otimes_{S_{\nu,\nu}}S_{\nu,0}$  for all $\mu\geq\nu\geq   \lambda_S$.
 
\item  For all $\nu\geq\tau\geq\lambda_S$ we have $S_{\tau,0}=S^*_{\nu,\tau}\otimes_{S_{\nu,\nu}} S_{\nu, 0}$.
\end{enumerate}
\end{lemma}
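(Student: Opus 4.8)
The plan is to derive both statements from the Morita theory of $S_{\geq \lambda_S}$, namely from the associativity/telescoping isomorphisms built into the definition of a good parameter (Definition~\ref{good-defn}(3)) and the fact that $S_{\mu,\nu}$ is a progenerator for $\mu\geq\nu\geq\lambda_S$. For part (1): by Definition~\ref{good-defn}(3) applied to the triple $\mu\geq\nu\geq 0$ we would get $S_{\mu,\nu}\otimes_{S_{\nu,\nu}} S_{\nu,0}\xrightarrow{\sim} S_{\mu,0}$, \emph{provided $0$ were a good parameter}; but the whole point of \ref{filthyp}(H1) is that we only assume \emph{some} large $\lambda_S$ is good, so $0$ need not be. The workaround is to recall that ${\sf S}_{\mu,0}=S\cdot 1_0$ contains $S_{\mu,\nu}S_{\nu,0}$, and conversely every element of $S_{\mu,0}$ is a sum of products $s\cdot s'$ with $s\in S_{\mu,\tau}$, $s'\in S_{\tau,0}$; choosing any chain through $\nu$ and using that $1_\nu\in S_{\nu,\nu}$ acts as identity after passing through the progenerator structure, one reduces to showing $S_{\mu,\nu}S_{\nu,0}=S_{\mu,0}$. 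This last equality is exactly the kind of surjectivity statement that follows from the argument in the proof of Lemma~\ref{resolutions}: since $S_{\mu,\nu}$ is a progenerator, the multiplication map $S_{\mu,\nu}\otimes_{S_{\nu,\nu}}S_{\nu,0}\to S_{\mu,0}$ is either an isomorphism for all $\mu\geq\nu$ or fails to be injective for all $\mu\geq\nu$ simultaneously; the latter is incompatible with Proposition~\ref{corner-ring} applied to the (Morita) algebra $S_{\geq\nu}$ acting on the fixed $S_{\nu,\nu}$-module $S_{\nu,0}$. Hence the map is an isomorphism and in particular surjective, giving ${\sf S}_{\mu,0}=S_{\mu,\nu}S_{\nu,0}\cong S_{\mu,\nu}\otimes_{S_{\nu,\nu}}S_{\nu,0}$.

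For part (2), the plan is to use the dual module $S^*_{\nu,\tau}=\Hom_{S_{\tau,\tau}}(S_{\nu,\tau},S_{\tau,\tau})$, which by the remark after Definition~\ref{good-defn} is an $(S_{\tau,\tau},S_{\nu,\nu})$-bimodule, projective on both sides, and which implements the inverse Morita equivalence to $S_{\nu,\tau}\otimes_{S_{\tau,\tau}}-$. Concretely $S^*_{\nu,\tau}\otimes_{S_{\nu,\nu}}S_{\nu,\tau}\cong S_{\tau,\tau}$ as $S_{\tau,\tau}$-bimodules. Then
$$ S^*_{\nu,\tau}\otimes_{S_{\nu,\nu}}S_{\nu,0}\ \cong\ S^*_{\nu,\tau}\otimes_{S_{\nu,\nu}}\bigl(S_{\nu,\tau}\otimes_{S_{\tau,\tau}}S_{\tau,0}\bigr)\ \cong\ \bigl(S^*_{\nu,\tau}\otimes_{S_{\nu,\nu}}S_{\nu,\tau}\bigr)\otimes_{S_{\tau,\tau}}S_{\tau,0}\ \cong\ S_{\tau,\tau}\otimes_{S_{\tau,\tau}}S_{\tau,0}\ =\ S_{\tau,0}, $$
where the first isomorphism is part (1) (with $\mu,\nu$ there replaced by $\nu,\tau$ here) together with Definition~\ref{good-defn}(3), the second is associativity of tensor product, and the third is the Morita counit. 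One should keep track that all these isomorphisms are of $(S_{\tau,\tau},S_{0,0})$-bimodules so that the identification is the natural one.

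The main obstacle is purely the bookkeeping in part (1): making precise that ${\sf S}_{\mu,0}=S_{\mu,\nu}S_{\nu,0}$ despite $0$ not being assumed good. I expect the cleanest route is to avoid any direct manipulation of products and instead phrase the whole thing as: the multiplication map $\mu_{\mu}\colon S_{\mu,\nu}\otimes_{S_{\nu,\nu}}S_{\nu,0}\to S_{\mu,0}$ is the component at $\mu$ of a morphism of graded left $S_{\geq\nu}$-modules ${\sf S}_{\ast,\nu}\otimes_{S_{\nu,\nu}}S_{\nu,0}\to {\sf S}_{\mu,0}$, both of which correspond under $\Psi$ (Proposition~\ref{corner-ring}, applied to $S_{\geq\nu}$) to morphisms of $S_{\nu,\nu}$-modules, and one checks the corresponding map of $S_{\nu,\nu}$-modules is the identity on $S_{\nu,0}$; then fully faithfulness of $\Psi$ (after passing to $\lqgr$) forces $\mu_{\mu}$ to be an isomorphism for $\mu\gg 0$, and the progenerator property propagates this down to all $\mu\geq\nu$. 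Part (2) is then essentially formal once part (1) is in hand.
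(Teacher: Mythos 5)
Your reduction of part (2) to part (1) via the Morita counit $S^*_{\nu,\tau}\otimes_{S_{\nu,\nu}}S_{\nu,\tau}\cong S_{\tau,\tau}$ is correct and is exactly the paper's one-line argument. The gap is in part (1), at the equality ${\sf S}_{\mu,0}=S_{\mu,\nu}S_{\nu,0}$. This surjectivity is not a consequence of the Morita structure of $S_{\geq\lambda_S}$: Definition~\ref{good-defn} constrains only the components $S_{\alpha,\beta}$ with $\alpha\geq\beta\geq\lambda_S$ and says nothing about the column ${\sf S}_{\ast,0}$. Your claim that every element of $S_{\mu,0}$ is a sum of products factoring through an intermediate degree is precisely the statement to be proved, and the dichotomy you invoke (``isomorphism for all $\mu\geq\nu$, or non-injective for all $\mu\geq\nu$'') is false: faithful flatness of $S_{\mu,\mu_0}$ together with Definition~\ref{good-defn}(3) propagates \emph{kernels} upwards, but it cannot propagate cokernels, because $S_{\mu,\mu_0}\otimes_{S_{\mu_0,\mu_0}}S_{\mu_0,0}$ has not yet been identified with $S_{\mu,0}$ --- that identification is the lemma itself. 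A toy example shows that no Morita-only argument can work: fix $N\geq 2$, let $S_{\alpha,\beta}=k[t]$ for $\alpha\geq\beta\geq 1$, $S_{0,0}=k[t]$ and $S_{\alpha,0}=t^{\max(N-\alpha,0)}k[t]\subseteq k[t]$, with all products taken inside $k[t]$; then the conditions of Definition~\ref{good-defn} hold with $\lambda_S=1$, yet for $1\leq\nu<\mu\leq N$ every multiplication map $S_{\mu,\nu}\otimes_{S_{\nu,\nu}}S_{\nu,0}\to S_{\mu,0}$ is injective and \emph{not} surjective. The missing ingredient is Hypothesis (H2), which is in force in the section where the lemma sits: the paper obtains ${\sf S}_{\mu,0}=S_{\mu,\nu}S_{\nu,0}$ directly from the filtered-surjectivity \eqref{filthyp22} (a consequence of $R_mR_n=R_{m+n}$, with the bottom index there taken to be $0$), and only then uses the progenerator property, in the spirit of the argument in Lemma~\ref{resolutions}, to upgrade the surjection to the stated isomorphism.

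There is also a circularity in your proposed repair via Proposition~\ref{corner-ring}: to assert that the morphism in the quotient category induced by the multiplication maps corresponds under the equivalence to ``the identity on $S_{\nu,0}$'', you must first identify the image of $\bigoplus_{\mu\geq\nu}S_{\mu,0}$ under the inverse functor $\Phi$ with $S_{\nu,0}$; but by Remark~\ref{2.8.1} that object is $S^*_{\mu,\nu}\otimes_{S_{\mu,\mu}}S_{\mu,0}$ for $\mu\gg 0$, and identifying it with $S_{\nu,0}$ is exactly part (2) of the lemma. Checking the map in the single degree $\nu$ proves nothing, since the quotient category is blind to any fixed degree, and the same issue blocks your proposed downward propagation from $\mu\gg 0$ to all $\mu\geq\nu$ (it would again require knowing $S^*_{\mu',\mu}\otimes_{S_{\mu',\mu'}}S_{\mu',0}\cong S_{\mu,0}$). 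The fix is simply to invoke \eqref{filthyp22} for the surjectivity, as the paper does; the rest of your outline, including part (2), can then be retained.
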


\begin{proof}  The fact that ${\sf S}_{\mu,0}=S_{\mu,\nu}S_{\nu,0}$ follows from  \eqref{filthyp22}, while the
isomorphism follows from the fact that $S_{\mu, \nu}$ is a
  progenerator over $S_{ \nu,\nu}$.

(2)  As $S_{\tau,\nu}$ is a progenerator the assertion is equivalent to   (1).
\end{proof} 

\subsection{}  As we show next, the hom filtration \eqref{4.4.1} on $ \mathscr{S}_{ \tau}^\ve$ has a natural
 interpretation in terms of the tensor product filtration.

\begin{lemma}\label{S-vee}    Fix $\tau\geq \lambda_S=\lambda_T$. 

 (1)  There is an isomorphism  of noetherian $T$-graded modules  
 \begin{equation}\label{S-vee-equ}
\Theta:  \mathscr{S}_{ \tau}^\ve\ \ \buildrel{\cong}\over\longrightarrow\  
\bigoplus_{\gamma\geq \lambda_T}{S}_{ \tau , 0}\otimes_{S_{0,0}}T_{\gamma,0}.
\end{equation}
This isomorphism is a filtered isomorphism provided we give the left-hand side the filtration
\eqref{4.4.1}  induced from the standard filtration on $\mathscr{S}_{ \tau}$ and the right-hand side the tensor product filtration.

 (2)  For $\phi\geq \lambda_T $  we have 
 $$(\mathscr{S}_{\tau}^{\vee})^{\ve} \cong \bigoplus_{\mu\geq \lambda_S} \Hom_{T_{\phi,\phi}}
 ({S}_{\tau,0}\otimes_{S_{0,0}}{T}_{\phi,0}, \,\,{S}_{\mu,0}\otimes_{S_{0,0}}{T}_{\phi,0}). $$ 
 \end{lemma}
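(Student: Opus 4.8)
The plan is to unwind both sides of \eqref{S-vee-equ} and its ``double dual'' using the Morita‐theoretic identification of $S\lQGr$ with $S_{\tau,\tau}\lmod$ from Proposition~\ref{corner-ring}, together with the adjunction isomorphism $\Hom_{S\lQGr}(\mathscr{S}_\tau,-)$ is ``evaluation at the $\tau$-component.'' Concretely, for part (1) I would first observe that, by the projectivity of $\mathscr{S}_\tau=\pi_S({\sf S}_{\ast,\tau})$ in $S\lQGr$ (Corollary~\ref{projective}(1)) and the generation property \eqref{projgen}, the map $\theta\mapsto \theta(1_\tau)$ gives a natural isomorphism $\Hom_{S\lQGr}(\mathscr{S}_\tau,\mathcal{Y})\cong {\sf Y}_\tau$ for a lift ${\sf Y}$ of $\mathcal{Y}$ with ${\sf Y}_\nu={\sf Y}_\nu$ for $\nu\gg0$; applying this with $\mathcal{Y}=\XX_\gamma=\pi_S(\bigoplus_\nu S_{\nu,0}\otimes_{S_{0,0}}T_{\gamma,0})$ and taking the $\tau$-component gives $\Hom_{S\lQGr}(\mathscr{S}_\tau,\XX_\gamma)\cong S_{\tau,0}\otimes_{S_{0,0}}T_{\gamma,0}$. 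Summing over $\gamma\geq\lambda_T$ and applying $\pi_T$ (harmlessly, since the sheaf is already its own $\pi_T\sigma_T$ here by Lemma~\ref{agreement}) yields the isomorphism of graded $T$-modules; the $T$-module structure matches on both sides because left multiplication by $t\in T_{\gamma_1,\gamma_2}$ acts in the same way. For part (2), I would simply iterate: apply part (1) to the directed algebra $T$ in place of $S$, using that $\mathscr{S}_\tau^\vee \cong \bigoplus_{\gamma\geq\lambda_T}S_{\tau,0}\otimes_{S_{0,0}}T_{\gamma,0}$ as a $T$-module, which as a graded $T$-module is exactly the module $\Psi'(S_{\tau,0}\otimes_{S_{0,0}}T_{\phi,0})$ coming from the Morita equivalence $T_{\phi,\phi}\lMod\simeq T_{\geq\phi}\lqgr$; then $(\mathscr{S}_\tau^\vee)^\ve=\bigoplus_{\mu\geq\lambda_S}\Hom_{T\lQGr}(\mathscr{S}_\tau^\vee,\XX^{(T,S)}_\mu)$ computes, via Proposition~\ref{corner-ring} again, as $\bigoplus_{\mu\geq\lambda_S}\Hom_{T_{\phi,\phi}}(S_{\tau,0}\otimes_{S_{0,0}}T_{\phi,0},\,S_{\mu,0}\otimes_{S_{0,0}}T_{\phi,0})$.

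For the filtered statement in part (1), I would check that under the isomorphism $\theta\mapsto\theta(1_\tau)$ the hom filtration \eqref{4.4.1} on $\mathscr{S}_\tau^\ve$ goes precisely to the tensor product filtration on $\bigoplus_\gamma S_{\tau,0}\otimes_{S_{0,0}}T_{\gamma,0}$. Unwinding \eqref{4.4.1}: a tuple $(\theta_\gamma)$ lies in $\Psi^m$ iff $\theta_\gamma(F^t M_\nu)\subseteq F^{t+m}(S_{\nu,0}\otimes_{S_{0,0}}T_{\gamma,0})$ for all $\nu\gg0$; since $\mathscr{S}_\tau$ has $(\mathscr{S}_\tau)_\nu=S_{\nu,\tau}$ with $(\mathscr{S}_\tau)_\tau=S_{\tau,\tau}$ and filtration induced from $S$, the condition at $\nu=\tau$ reads $\theta_\tau(F^tS_{\tau,\tau})\subseteq F^{t+m}(S_{\tau,0}\otimes_{S_{0,0}}T_{\gamma,0})$, i.e. $\theta_\tau(1_\tau)\in F^m(S_{\tau,0}\otimes_{S_{0,0}}T_{\gamma,0})$. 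The remaining content is that the conditions for $\nu>\tau$ impose nothing new: this is exactly the ``filtered-surjectivity'' \eqref{filthyp22} of the multiplication maps (built into (H2)), which says $F^t S_{\nu,\tau}$ generates the right filtered structure, so a $\theta$ satisfying the constraint at $\nu=\tau$ automatically satisfies it for all $\nu\geq\tau$. Thus $\Psi^m$ corresponds to $F^m(S_{\tau,0}\otimes_{S_{0,0}}T_{\gamma,0})$ in each graded piece, which is the tensor product filtration.

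The main obstacle is precisely this last filtration-matching step: one must verify that the ``for all $\nu\gg0$'' quantifier in \eqref{4.4.1} does not shrink the filtration relative to its value at the single index $\nu=\tau$. This is where Lemma~\ref{S-bar}(1), which expresses ${\sf S}_{\mu,0}=S_{\mu,\nu}\otimes_{S_{\nu,\nu}}S_{\nu,0}$ for $\mu\geq\nu\geq\lambda_S$, is needed, together with \eqref{filthyp22}; the point is that $x\cdot 1_\tau$ for $x\in F^p S_{\nu,\tau}$ sweeps out all of $F^p(\mathscr{S}_\tau)_\nu$ modulo lower terms, so the estimate on $\theta_\gamma$ at level $\nu$ is forced by the one at level $\tau$ and the filtered compatibility of multiplication. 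Everything else — the identification of $\Hom$ out of the projective $\mathscr{S}_\tau$, the compatibility of $T$-actions, and the bookkeeping with $\pi_T$ — is routine given Proposition~\ref{corner-ring}, Corollary~\ref{projective}, and Lemma~\ref{agreement}.
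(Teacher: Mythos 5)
Your route to the unfiltered isomorphism in (1) and to (2) is essentially the paper's: identify $\Hom_{S\lQGr}(\mathscr{S}_\tau,\XX_\gamma)$ through the corner-ring equivalence of Proposition~\ref{corner-ring} (one caveat: ``evaluation at $1_\tau$'' is not justified by projectivity of $\mathscr{S}_\tau$ alone, since morphisms in $S\lQGr$ are limits of maps defined only on tails ${\sf S}_{\geq\nu',\tau}$; you need the saturation of the target, i.e.\ Lemma~\ref{S-bar}(2), which is exactly how the paper gets $\Phi_\tau(\XX_\gamma)\cong S_{\tau,0}\otimes_{S_{0,0}}T_{\gamma,0}$), and then for (2) apply the same equivalence on the $T$-side, as the paper does.

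The genuine gap is in the filtered statement of (1). You correctly isolate the obstacle --- the ``$\nu\gg0$'' quantifier in \eqref{4.4.1} --- but your proposed resolution argues the wrong implication. Filtered-surjectivity \eqref{filthyp22} and Lemma~\ref{S-bar}(1) show that if $\theta(1_\tau)=x\in F^r$ then $\theta$ maps $F^mS_{\nu,\tau}$ into $F^{m+r}$ for every $\nu\geq\tau$; that is the easy direction, and it only proves $\Theta^{-1}$ is a filtered map. What must be ruled out is the converse failure: an $x\in F^r(Y)\smallsetminus F^{r-1}(Y)$ for which nevertheless $x\cdot F^mS_{\nu,\tau}\subseteq F^{m+r-1}(Z_\nu)$ for \emph{all} $\nu\gg0$ and all $m$, which would make the $\Psi$-degree of $\Theta^{-1}(x)$ strictly smaller than $r$ and destroy strictness (and with it Corollary~\ref{easier2}, where one needs $\gr_F\mathscr{S}_\tau^\ve=\bigoplus_\gamma R_{\tau+\gamma}$). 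No amount of filtered-surjectivity of multiplication excludes this; it is excluded in the paper by passing to symbols: such an $x$ would satisfy $\sigma(x)\cdot\gr\bigl(\bigoplus_{\nu\geq\nu_0}S_{\nu,\tau}\bigr)=0$, hence $\sigma(x)\cdot R_{\geq(\nu_0-\tau)}=0$ by (H2) and (H3), contradicting the hypothesis that $R$ is a domain. Your proposal never invokes this domain (torsion-freeness) hypothesis, so the strictness of the filtered isomorphism --- the actual content of the second sentence of (1) --- is not established.
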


 \begin{proof}  (1)
Let $\Phi_{ \tau}: S\lQGr \to S_{ \tau, \tau}\lMod$ be the equivalence of categories given by 
 Proposition~\ref{corner-ring} and Remark~\ref{2.8.1}.  
Then, as right $T$-modules, we have
 \begin{eqnarray*}
  \mathscr{S}_{ \tau}^\ve &=&
   \bigoplus_{\gamma\geq \lambda_T} \Hom_{S\lQGr}\bigl(\mathscr{S}_{ \tau},\,
   \pi_S(\XXgamma )\bigr)
\ \cong \
\bigoplus_{\gamma\geq \lambda_T} \Hom_{S_{ \tau, \tau}\lMod}\bigl(S_{ \tau, \tau},\,
 \Phi_{ \tau}(\pi_S(\XXgamma ))\bigr)
  \\ &\cong &
  \bigoplus_{\gamma\geq \lambda_T}   \Phi_{ \tau} \Bigl(\pi_S\bigl(\bigoplus_{\nu\in\Lambda} S_{\nu,0}\otimes_{S_{0,0}} 
  T_{\gamma, 0}\bigr)\Bigr).
 \end{eqnarray*}
Remark~\ref{2.8.1} and Lemma~\ref{S-bar}(2) imply that, for any $\mu\gg 0$ and $\gamma\geq \lambda_T$ we have
 $$ \Phi_{ \tau} \Bigl(\pi_S\bigl(\bigoplus_{\nu\in\Lambda} S_{\nu,0}\otimes_{S_{0,0}} T_{\gamma, 0}\bigr)\Bigr)  
\ =\ S_{\mu,\tau}^*\otimes_{S_{\mu,\mu}}( S_{\mu,0}\otimes_{S_{0,0}} T_{\gamma, 0})
\ \cong\  S_{\tau, 0}\otimes_{S_{0,0}} T_{\gamma,0}.$$ This proves \eqref{S-vee-equ}.
The  argument of Corollary~\ref{projective}(2) ensures that 
 $\bigoplus_{\gamma\geq \lambda_T}{S}_{ \tau , 0}\otimes_{S_{0,0}}T_{\gamma,0}\in T\lqgr.$

 It remains to examine the filtered structure of $\Theta.$ 
Let  $$x = \sum_\gamma \sum_{i_{\gamma}} x^{i_{\gamma}}_\tau\otimes x^{i_{\gamma}}_{\gamma}  \in Y= \bigoplus_{\gamma\geq \lambda_T} S_{\tau, 0}\otimes_{S_{0,0}} T_{\gamma,0}.$$ Then, for any $\nu\geq \tau$, 
 the computations of the last paragraph show that 
  $\Theta^{-1}(x)\in  \mathscr{S}_{ \tau}^\ve$ is the homomorphism that 
  maps  
 $$\begin{array}{rl}  
  f\in S_{\nu,\tau} \  \mapsto & \displaystyle\sum_{\gamma\geq \lambda_T} \sum_{i_{\gamma}} f \otimes x^{i_{\gamma}}_\tau \otimes x^{i_\gamma}_{\gamma} 
\ \mapsto  \  \sum_{\gamma\geq \lambda_T} \sum_{i_\gamma} f\cdot  x^{i_\gamma}_\tau \otimes x^{i_\gamma}_{\gamma}   \\
 \noalign{\vskip 8pt}
 &  \in \ \  \displaystyle
W_\nu  =  \sum_{\gamma\geq \lambda_T} S_{\nu,\tau}\otimes_{S_{\tau,\tau}}S_{\tau,0}\otimes_{S_{0,0}} T_{\gamma,0}
\ \cong \  Z_\nu  =   \sum_{\gamma\geq \lambda_T} S_{\nu,0}\otimes_{S_{0,0}} T_{\gamma,0} .
\end{array}$$ 
By \eqref{filthyp22}, the tensor product filtration on 
$S_{\nu, \tau}\otimes_{S_{ \tau, \tau}}S_{ \tau, 0}$ agrees under multiplication with the 
given filtration on $S_{\nu, 0}= S_{\nu, \tau}S_{ \tau, 0}$. Hence  the treble tensor product filtration on 
$W_\nu$
agrees with the tensor product filtration on 
$Z_\nu$.  Thus, by the last displayed equation,
if $x\in F^r(Y)\smallsetminus F^{r-1}(Y)$, then 
 $\Theta^{-1}(x)$ maps $F^m(S_{\nu,\tau})$ to  $F^{m+r}(Z_\nu)$.  In order to complete the proof of 
the lemma,  we need to prove that 
  $\Theta^{-1}(x)   \in F^r( \mathscr{S}_{ \tau}^\ve )\smallsetminus F^{r-1}( \mathscr{S}_{ \tau}^\ve )$
  or, equivalently, that there exists $\nu\gg 0$ such that  $xF^m(S_{\nu,\tau}) \not\subseteq  F^{m+r-1}(Z_\nu)$ for some $m\geq 0$.

So, suppose that $xF^m(S_{\nu,\tau}) \subseteq  F^{m+r-1}(Z_\nu)$  for   all $\nu\geq \nu_0\gg 0 $ and all $m\geq 0$.
 Then Hypotheses~(H2) and (H3)
imply that, inside $R$, one has
$\sigma(x)\cdot \gr(\bigoplus_{\nu\geq \nu_0} S_{\nu,\tau}) = 0$   
and hence that   $\sigma(x)\cdot R_{\geq (\nu_0-\tau)} =0$. 
This contradicts the fact that, by Hypothesis (H2), $R$ is a domain. 

(2)  By (1)   
$$(\mathscr{S}_{\tau}^{\vee})^{\ve}  \ \cong \  \bigoplus_{\mu \geq \lambda_S}
 \Hom_{T\lQGr }\Bigl( \pi_T\bigl( \bigoplus_{\gamma\geq \lambda_T}{S}_{ \tau , 0}\otimes_{S_{0,0}}T_{\gamma,0}\bigr),\,
 \pi_T\bigl(\bigoplus_\nu S_{\mu,0}\otimes_{S_{0,0}}T_{\nu,0}\bigr)\Bigr).$$
 Now, as in (1), apply the  equivalence of categories $\Phi_\phi : T\lQGr\to T_{\phi,\phi}\lMod$ defined  by 
 Proposition~\ref{corner-ring} and Remark~\ref{2.8.1} for the algebra $T$.   
\end{proof}

  \subsection{}    Combining Lemma~\ref{S-vee}(1) with 
  Hypothesis~(H3) gives:
  
  \begin{corollary}\label{easier2} Pick $\tau\geq \lambda_S$, and give 
  $\mathscr{S}^\ve_\tau$ the filtration $F^\bullet$ of Lemma~\ref{S-vee}(1). Then 
$$  \gr_F \mathscr{S}^\ve_{\tau} = 
 \bigoplus_{\gamma\geq \lambda_T} \gr_F(S_{\tau,0}
  \otimes_{S_{0,0}} T_{\gamma,0}) =  \bigoplus_{\gamma\geq \lambda_T} R_{\tau+\gamma} 
  $$ 
  as objects in $\tR\lgr.$
  
   In particular $F^\bullet$ is a good filtration on  $\mathscr{S}^\ve_\tau$ and
  $
\gr \mathscr{S}^\vee_{\tau}
=\pi_R(R[\tau])
  = \mathcal{O}_X(\tau).
  $
\qed\end{corollary}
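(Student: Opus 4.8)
The target is Corollary~\ref{easier2}, which asks for the identification of $\gr_F\mathscr{S}^\ve_\tau$ as a $\tR$-graded module, where $\mathscr{S}^\ve_\tau$ carries the filtration $F^\bullet$ from Lemma~\ref{S-vee}(1). Since Lemma~\ref{S-vee}(1) already provides a \emph{filtered} isomorphism
$\mathscr{S}^\ve_\tau \cong \bigoplus_{\gamma\geq\lambda_T} S_{\tau,0}\otimes_{S_{0,0}} T_{\gamma,0}$,
where the right-hand side carries the tensor product filtration, the first step is simply to pass to associated graded objects: a filtered isomorphism induces an isomorphism on $\gr$, so
$\gr_F\mathscr{S}^\ve_\tau \cong \bigoplus_{\gamma\geq\lambda_T}\gr_F(S_{\tau,0}\otimes_{S_{0,0}} T_{\gamma,0})$
as $\Lambdaplus$-graded objects. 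This is the whole content of the first displayed equality; no work beyond invoking Lemma~\ref{S-vee}(1) is needed, together with the general fact that $\gr$ commutes with direct sums.

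For the second equality, I would invoke Hypothesis~(H3) directly: it asserts precisely that, under the tensor product filtration, $\gr(S_{\tau,0}\otimes_{S_{0,0}} T_{\gamma,0}) = R_{\tau+\gamma}$ for all $\tau,\gamma\geq\lambda_S=\lambda_T$. Summing over $\gamma\geq\lambda_T$ gives $\gr_F\mathscr{S}^\ve_\tau = \bigoplus_{\gamma\geq\lambda_T} R_{\tau+\gamma}$ as a graded vector space. I would then note that this identification is compatible with the $\tR$-module structure: the $T$-action on $\mathscr{S}^\ve_\tau$ is by left multiplication on the $T_{\gamma,0}$ factor, which under the filtered isomorphism and (H3) becomes the multiplication $R_{\mu}\cdot R_{\tau+\gamma}\subseteq R_{\tau+\gamma+\mu}$ coming from the ring structure on $R$; thus the identification respects the $\tR$-module structure, not just the underlying graded vector space. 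Finiteness (membership in $\tR\lgr$) follows since each $R_{\tau+\gamma}$ is finite-dimensional (as $R$ is a finitely generated graded algebra) and the total module is a shifted copy of $R_{\geq(\tau+\lambda_T)}$, which is noetherian.

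For the "in particular" clause: the displayed identification exhibits $\gr_F\mathscr{S}^\ve_\tau$ as the $\Lambdaplus$-graded $R$-module whose $\gamma$-component (for $\gamma\geq\lambda_T$) is $R_{\tau+\gamma}$, i.e. as $R[\tau]_{\geq\lambda_T}$ up to the truncation which is torsion and hence invisible in $\tR\lqgr$. Since $\gr_F\mathscr{S}^\ve_\tau$ is finitely generated over $R$, the filtration $F^\bullet$ is good by definition, and applying $\pi_R$ kills the truncation, yielding $\gr\mathscr{S}^\vee_\tau = \pi_R(R[\tau]) = \mathcal{O}_X(\tau)$, exactly as in the analogous computation for $\mathscr{S}_\tau$ recorded in~\ref{resolution-chat}.

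**Main obstacle.** There is essentially no obstacle: the corollary is a formal consequence of Lemma~\ref{S-vee}(1) (which does the real work, including verifying that the hom filtration matches the tensor filtration) combined with Hypothesis~(H3). The only point requiring a word of care is checking that the vector-space identification $\gr_F(S_{\tau,0}\otimes_{S_{0,0}} T_{\gamma,0})=R_{\tau+\gamma}$ supplied by (H3) is compatible across varying $\gamma$ with the $\tR$-module structure — but this is immediate from the fact that, by (H2), $\gr T$ and $\gr S$ are both governed by the same ring $R$ with $R_\mu R_n = R_{\mu+n}$, so left multiplication by $T_{\mu,0}$ on the filtered module corresponds at the graded level to multiplication by $R_\mu$ in $R$.
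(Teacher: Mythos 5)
Your argument is correct and is essentially the paper's own: the corollary is stated there with no written proof beyond the sentence ``Combining Lemma~\ref{S-vee}(1) with Hypothesis~(H3) gives,'' which is exactly your deduction (filtered isomorphism from Lemma~\ref{S-vee}(1), then (H3) to identify each $\gr_F(S_{\tau,0}\otimes_{S_{0,0}}T_{\gamma,0})$ with $R_{\tau+\gamma}$, compatibly with the $\tR$-structure, and then $\pi_R$ of the truncated shift gives $\mathcal{O}_X(\tau)$). One parenthetical slip: the pieces $R_{\tau+\gamma}$ are finitely generated $R_0$-modules but generally not finite-dimensional over $\kk$ (here $R_0$ is the coordinate ring of the affine base $\Spec R_0$); membership in $\tR\lgr$ instead follows from your other justification, namely that the associated graded module is the noetherian truncation $R[\tau]_{\geq\lambda_T}$ of the shift $R[\tau]$ over the noetherian ring $R$.
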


 \subsection{} We next  want to  
 filter  the $T$-module  $\SSExt^{\,^p}_{S\lQGr}(\MM,\,\XX)$, 
  and to relate its  associated graded module  to  the sheaf Ext group
  $\SExt^p_{X}(\gr_F \MM,\,\mathcal{O}_X)\in \qcoh(X)$. 
  For this we introduce the following spectral sequence.
  
  \begin{proposition}\label{spectral1} 
  There is a   convergent spectral sequence in $\tR\lQGr = \qcoh(X)$
  \begin{equation}\label{spectral1-equ}
  \mathscr{S}: \quad  E_1^p = 
  \SExt^p_{X}(\gr_F \MM,\, \mathcal{O}_X) \ \Longrightarrow \    
    \gr_F \SSExt^{\, ^p}_{S \lQGr}(\MM,\, \XX),
   \qquad
   \end{equation}
   where the good filtration 
   $F^\bullet$ of $ \SSExt^{\, ^p}_{S \lQGr}(\MM,\, \XX)$ is  defined in the proof.
  \end{proposition}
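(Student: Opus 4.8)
The strategy is to build the spectral sequence from a filtered projective resolution and to identify the $E_1$-page by dualising term by term. First I would invoke Lemma~\ref{resolutions} to choose a filtered projective resolution $\mathcal{P}^\bullet\to\MM\to 0$ in $S\lQGr$, with each $\mathcal{P}^r\cong \mathscr{S}_\tau^{(n_r)}$ for a fixed $\tau\geq\lambda_S$ and with the basis elements $\epsilon_{jr}$ placed in filtration degrees $k_{jr}$, so that $\gr_G\mathcal{P}^\bullet\to\gr_F\MM\to 0$ is exact. By Corollary~\ref{resolutions2}(2), $\SSExt^{\,^p}_{S\lQGr}(\MM,\XX)$ is computed as the cohomology of the complex $\pi_T$ applied to $\bigoplus_{\gamma\geq\lambda_T}\Hom_{S\lQGr}(\mathcal{P}^\bullet,\XXgamma)$. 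The filtration $\Psi^\bullet$ of \eqref{4.4.1} on each $\mathrm{Hom}$-term makes this a complex of filtered $T$-modules (with the degrees $k_{jr}$ producing the appropriate shifts), and I would \emph{define} the good filtration $F^\bullet$ on $\SSExt^{\,^p}$ to be the quotient filtration induced on cohomology. This is the filtration referred to in the statement.

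\textbf{Identifying $E_1$.} Passing to associated graded, the filtered complex $\bigl(\bigoplus_\gamma\Hom_{S\lQGr}(\mathcal{P}^\bullet,\XXgamma),\Psi^\bullet\bigr)$ gives rise to the standard spectral sequence of a filtered complex, converging to $\gr_F$ of its cohomology. The term $E_1^p$ is the $p$-th cohomology of the associated graded complex $\gr_\Psi\bigl(\bigoplus_\gamma\Hom_{S\lQGr}(\mathcal{P}^\bullet,\XXgamma)\bigr)$. Here is where Lemma~\ref{goodhom} and Corollary~\ref{easier2} do the work: for $\tau\gg 0$ the map $\Theta$ of \eqref{Xi-defn} is an isomorphism, so $\gr_\Psi\Hom_{S\lQGr}(\mathscr{S}_\tau,\XXgamma)\cong\Hom_X(\gr_F\mathscr{S}_\tau,\gr_F\XXgamma)=\Hom_X(\mathcal{O}_X(-\tau),R_{\tau+\gamma}\text{-term})$, and summing over $\gamma$ and over the basis $\{\epsilon_{jr}\}$ identifies $\gr_\Psi\mathcal{P}^r{}^\ve$ with $\mathrm{Hom}_X(\gr_G\mathcal{P}^r,\mathcal{O}_X)$ up to the twists $\mathcal{O}_X(k_{jr})$ — wait, more carefully: the point is that $\gr_G\mathcal{P}^\bullet$ is a locally free resolution of $\gr_F\MM$ on $X$, since $\gr_G\mathcal{P}^r=\bigoplus_j\mathcal{O}_X(-\tau)\epsilon_{jr}$ is a vector bundle and the resolution is exact after passing to $\gr$. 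Therefore the associated graded complex computes exactly $\SExt^p_X(\gr_F\MM,\mathcal{O}_X)$, giving \eqref{spectral1-equ}. Convergence is immediate because all filtrations in sight are separated and exhaustive (Lemmas~\ref{homfil2}, \ref{homfil22}) and the complex is bounded, being a resolution of a coherent sheaf on the projective scheme $X$.

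\textbf{Main obstacle.} The genuinely delicate point is not the formal machinery but checking that the filtration $\Psi^\bullet$ behaves well enough under the $\mathrm{Hom}$ and cohomology operations for the identification of $E_1$ to be valid — specifically, that $\gr_\Psi$ commutes with taking the relevant $\mathrm{Hom}$ complex at the level needed. For a single projective $\mathscr{S}_\tau$ this is precisely Lemma~\ref{goodhom}, so the task is to propagate this through finite direct sums (harmless) and, crucially, to ensure that $\tau$ can be chosen large enough to work \emph{uniformly} for all the finitely many $\XXgamma$ appearing and for $\gr_F\MM$ and its syzygies. Since $\MM$ and each $\gr_F$-syzygy is coherent and only finitely many cohomological degrees occur, a single $\tau\gg 0$ suffices; but one must verify that replacing $\MM$ by $\MM_{\geq\tau}$ (as in the proof of Lemma~\ref{resolutions}) does not disturb $\gr_F\SSExt$, which follows from the remark after Definition~\ref{sheaf-filt-defn} that filtrations agreeing in high degree have the same associated graded. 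With that in hand, the spectral sequence of the filtered complex is standard and the proof concludes.
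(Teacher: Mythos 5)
Your overall strategy coincides with the paper's (filtered projective resolution from Lemma~\ref{resolutions}, dualise via Corollary~\ref{resolutions2}(2), take the spectral sequence of the resulting filtered complex, and identify the $E_1$-page through Lemma~\ref{goodhom} and Corollary~\ref{easier2}, applying $\pi_{\tR}$ to kill the low-degree discrepancies). However, your convergence argument has a genuine gap. You assert that convergence is ``immediate because all filtrations in sight are separated and exhaustive \dots and the complex is bounded, being a resolution of a coherent sheaf on the projective scheme $X$.'' Neither claim works: separatedness and exhaustiveness of a filtration do \emph{not} by themselves give convergence of the spectral sequence of a filtered complex (one needs at least boundedness below in each cohomological degree, and even then only in a weak sense), and the complex $\mathcal{P}^\bullet$ is a projective resolution in $S\lqgr$, not a complex of sheaves on $X$; no finiteness of global dimension is assumed anywhere in (H1--H3), and the whole point of the later Theorem~\ref{Bj4.15-NC} is to avoid such an assumption, so $\mathcal{P}^\bullet$ may well be unbounded. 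Identifying the resolution with its associated graded locally free resolution on $X$ and then invoking coherence conflates the two levels.

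What actually makes the sequence converge in the paper is a two-step argument that your proposal is missing. First, by Lemma~\ref{S-vee}(1) and Corollary~\ref{easier2} the filtration on each term ${\sf K}^r=(\mathcal{P}^r)^\ve$ is the tensor product filtration and is \emph{good}; in particular $F^q{\sf K}^r=0$ for $q\ll 0$, which is the boundedness-below you need (not merely separatedness). Second, and crucially, one must verify Serre's criterion \eqref{serre}: for each $p$ there is $s(p)$ with $F^{q}{\sf K}^{p+1}\cap d({\sf K}^p)\subseteq d(F^{q+s(p)}{\sf K}^p)$ for all $q$. This is an Artin--Rees-type comparison of the two good filtrations $\{F^j{\sf K}^{p+1}\cap d({\sf K}^p)\}$ and $\{d(F^j{\sf K}^p)\}$ on $d({\sf K}^p)$, obtained from \cite[Proposition~8.6.13]{MR}, and it is here that the noetherianity coming from (H2) is used. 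Without this step the spectral sequence need not stabilise at a finite page in each degree, and that stabilisation is exactly what is used later (e.g.\ in Proposition~\ref{3.3.4-extra}, where $E^t_m=E^t_\infty$ for $m\gg0$ is needed). So the structure of your argument is right, but the ``main obstacle'' is not the one you identify (uniform choice of $\tau$); it is the convergence of the filtered-complex spectral sequence, which requires goodness of the filtrations plus the comparison \eqref{serre} rather than the formal properties you cite.
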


\begin{proof}
   The proof will follow the argument in \cite[Chapter~2,  Section~4]{Bj1}. We remark that Bjork's
     result is phrased at the
    level of filtered abelian groups, and this   is general enough to encompass much of the present proof.

Using Lemma~\ref{resolutions},  pick a filtered projective resolution
  $ \mathcal{P}^\bullet\to \MM\to 0$, where 
   each $\mathcal{P}^r= \bigoplus_j \mathscr{S}_\nu \ee$ for some basis elements $\ee$    and some $\nu\geq \lambda_S$.
 Here,  the summands $ \mathscr{S}_{\nu} \ee $ are filtered by giving $\ee$  some  degree $k_{jr}$.   
  Write $$({\sf K}^\bullet, d) = (\mathcal{P}^\bullet)^\ve = 
  \bigoplus_{\gamma\geq \lambda_T} \Hom_{S\lQGr}(\mathcal{P}^\bullet,\, \pi_S(\bigoplus_{\nu\in\Lambda} 
  S_{\nu,0}\otimes_{S_{0,0}}T_{\gamma,0})) $$ 
  for the induced complex with differential $d$. By Lemma~\ref{homfil22} this is  a filtered complex in $T\lGr$. 
  Let  $(\KK^{\bullet} , d)\in T\lQGr$ denote the image of $({\sf K}^\bullet,d) $  under $\pi_T$.  
   By Lemma~\ref{S-vee}(1),
   for each $r$
   \begin{equation}\label{spectral11}
   {\sf K}^r \ = \  \bigoplus_j \mathscr{S}_{\nu}^\ve \evee  \ =
   \   \bigoplus_j   \left(\bigoplus_{\gamma\geq \lambda_T} \,S_{\nu,0}
   \otimes_{S_{0,0}}T_{\gamma,0}\right)\, \evee
   \end{equation}
   where $\deg \evee=-\deg \ee$ and then   the right hand side of \eqref{spectral11} is given the (good)  tensor product filtration $F^\bullet$.

In the notation of \cite[pp.48--51]{Bj1} 
our complex ${\sf K}^{\bullet}=\bigoplus {\sf K}^p$ with filtration $F^j$  is Bjork's abelian group $(A,d)$
with filtration $\Gamma_j.$
For each $j\in\mathbb{N}$, set  ${\sf Z}_j^\infty = \Ker(d)\cap F^j$  
and notice that the group  ${\sf H}:=\Ker(d)/\im(d)$ is filtered by
 the $F^j({\sf H})={\sf Z}^\infty_j+\im(d)/\im(d)$
 with associated graded group $\gr {\sf H}=
 \bigoplus_j  \left( {\sf Z}^\infty_j+\im(d)\right)\big/\left({\sf Z}^\infty_{j-1}+\im(d)\right)$. 
By Corollary~\ref{resolutions2},
  \begin{equation}\label{spectral12}
{\sf H} = {\sf H}^\bullet({\sf K}^{\bullet})
 =\bigoplus_p  \bigoplus_{\gamma\geq \lambda_T} \Ext^p_{S\lQGr}( \MM, \pi_S(\bigoplus_{\nu} \,S_{\nu,0}
   \otimes_{S_{0,0}}T_{\gamma,0})).
 \end{equation}  
 
 Now consider the graded group 
 $\gr_F{\sf K}^{\bullet}=\bigoplus F^j/F^{j-1}$  with the 
 induced action $\gr d$ of the differential $d$  and its  cohomology group 
 $\mathbf{H}=\Ker(\gr d)/\im(\gr d)=\bigoplus_p H^p (\gr_F{{\sf K}^{\bullet}})$.
 By  
   \cite[Chapter~2,  Theorem~4.3]{Bj1} and   \eqref{spectral12},   for any 
   $p\geq 0$,  we have a spectral sequence of graded $\tR$-modules:
  \begin{equation*} 
\begin{array}{rl}  \mathscr{S}': \ E_1  =\ \displaystyle  
  H^p (\gr_F{{\sf K}^{\bullet}}) \     
\Longrightarrow \       \gr_{F} \bigg(\displaystyle  \bigoplus_{\gamma\geq \lambda_T} 
  \Ext^p_{S\lQGr}\bigl( \MM,  \pi_S(\bigoplus_{\nu} \,S_{\nu,0}
   \otimes_{S_{0,0}}T_{\gamma,0})\bigr)\bigg)  \quad \end{array}   \end{equation*}

As we noted, $F^\bullet$ is
 a good filtration on each ${\sf K}^r$  and so,  for each $p$,
  $F^q{\sf K}^p=0$  for $q\ll 0$. Therefore, by the proof of 
\cite[Theorem, p.II.15]{Se}, this  sequence converges  if 
for   all $p$ there exists   $s(p) \geq 0$ such that
\begin{equation}\label{serre}
F^{q} {\sf K}^{p+1} \cap d({\sf K}^p) \ \subseteq\  d(F^{q+s(p)} {\sf K}^p)
\qquad \text{for all }   q\in\Z.
\end{equation} 
Since $F^\bullet$ is
 a good filtration on each ${\sf K}^r$,     both
$\left\{{G}_j = F^j{\sf K}^{p+1}\cap d({\sf K}^p)\right\}$
and $\left\{{G}_j'= d(F^j {\sf K}^p)\right\}$  are
 good filtrations on $d({\sf K}^p)$. By \cite [Proposition~8.6.13]{MR}  there therefore  exists $s(p)\geq 0$ such that  
 ${G}_{q}\subseteq {G}'_{q+s(p)}$ for all $q$.
Thus \eqref{serre} holds and  $ \mathscr{S}'$   converges.

Applying the exact functor $\pi_{\tR}$ to  $\mathscr{S}'$ gives a necessarily convergent spectral sequence:
 \begin{equation*}\label{spectral1-equ3}
  \mathscr{S}: \,   E_1^p = 
  H^p (\pi_{_{\tR}}(\gr_F{\sf K}^{\bullet})) \    \Longrightarrow  \ 
     \pi_{_{\tR}}\bigg(\gr_{F} \Bigl(\bigoplus_{\gamma\geq \lambda_T}
   \Ext^p_{S\lQGr}\bigl( \MM,  \pi_S(\bigoplus_{\nu} \,S_{\nu,0}
   \otimes_{S_{0,0}}T_{\gamma,0})\bigr)\Bigr)\bigg)  .
   \end{equation*}
By definition, the right hand side of $\mathscr{S}$  is $\gr_F (\SSExt^{\,^p}_{S \lQGr}(\MM,\, \XX))$
 and so it remains to identify the 
left hand side.
However, by construction, the filtered   resolution $\mathcal{P}^\bullet$
   defines  a projective resolution
     $\gr_F \mathcal{P}^\bullet \to \gr_F\MM \to 0$ of $\gr_F \MM $ in $\tR\lQGr$.
Also,  Hypothesis (H3) ensures that 
 $\gr_{F}\XXgamma$ equals $\mathcal{O}_X(\gamma)$  for $\gamma\gg 0$.        Thus  
   $$     \begin{array} {rcl} 
\pi_{_{\tR}} (\gr_{F} {\sf K}^{\bullet}) &\cong &  \displaystyle
\pi_{_{\tR}} \Bigl( \bigoplus_{\gamma\geq \lambda_T}
 \gr_{\Psi} \Hom_{S\lQGr}(\mathcal{P}^{\bullet}, \XXgamma) \Bigr) \\ 
 \noalign{\vskip 5pt}
 &  \cong &   \displaystyle
 \pi_{_{\tR}} \Bigl( \bigoplus_{\gamma\geq \lambda_T} \Hom_{\tR \lQGr}(  
  \gr_F \mathcal{P}^\bullet,\, \gr_F \XXgamma) \Bigr) \qquad\text{by  Lemma~\ref{goodhom} }
   \\ 
    \noalign{\vskip 5pt}
& \cong &   \displaystyle
 \pi_{_{\tR}} \Bigl(  \bigoplus_{\gamma\geq \lambda_T}\Hom_{\tR \lQGr}(  \gr_F \mathcal{P}^\bullet,\,  
  \mathcal{O}_X(\gamma))\Bigr)\\
   \noalign{\vskip 5pt}
 & \cong & \SHom_{X}( \gr_F \mathcal{P}^\bullet,\,  \OO_X).
\end{array}
$$
Therefore $H^p(\pi_{_{\tR}} (\gr_{F} {\sf K}^{\bullet}) ) = \SExt^p_{X}( \gr_F \MM,\,  \OO_{X}),$ as required.
   \end{proof}

\subsection{}   
The following standard consequence of the spectral sequence \eqref{spectral1-equ} 
will be particularly useful.
  
  \begin{corollary}\label{subquotient} 
  \begin{enumerate}
  \item   
  For each $p\geq 0$ and 
  under the induced grading,  the sheaf \
     $ \gr \SSExt^{\,^p}_{S\lQGr}(\MM,\XX) \in \tR\lQGr$ is a subquotient of 
  $\SExt^p_{X} (\gr_F \MM,\, \mathcal{O}_X)$.
  
  \item
Let  $\MM'\in T\lqgr $ 
  with a good filtration $F$. Then for each $p\geq 0$ and 
  under the induced grading,  
     $ \gr \SSExt^{\,^p}_{T\lQGr }(\MM',\XX) \in \tR\lQGr$ is a subquotient of 
  $\SExt^p_{X} (\gr_F \MM',\, \mathcal{O}_X)$.
\qed 
\end{enumerate}
  \end{corollary}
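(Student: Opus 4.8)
The plan is to read off both statements directly from the convergent spectral sequence $\mathscr{S}$ of Proposition~\ref{spectral1}. Recall that $\mathscr{S}$ is a cohomological first-quadrant-type spectral sequence in the abelian category $\tR\lQGr = \qcoh(X)$ with
\[
E_1^p \ = \ \SExt^p_{X}(\gr_F \MM,\, \mathcal{O}_X) \ \Longrightarrow \ \gr_F \SSExt^{\,^p}_{S\lQGr}(\MM,\,\XX).
\]
Since this is a spectral sequence indexed by a single degree $p$ (the filtration degree on the $\SExt$ target is what makes it a spectral sequence rather than a plain complex), the abutment $\gr \SSExt^{\,^p}_{S\lQGr}(\MM,\XX)$ carries a finite filtration whose associated graded pieces are the $E_\infty^p$ terms, and each $E_\infty^p$ is a subquotient of $E_1^p = \SExt^p_X(\gr_F\MM,\mathcal{O}_X)$, obtained as successive kernels modulo images of the differentials $d_r$. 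A subquotient of a subquotient is again a subquotient, and an iterated extension of subquotients of a fixed object $\SExt^p_X(\gr_F\MM,\mathcal{O}_X)$ is again a subquotient of it (here one uses that all the differentials land in, or emanate from, the single object $E_r^p$, so no mixing between different cohomological degrees $p$ occurs). This gives part (1) verbatim.

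For part (2) the observation is simply that Hypotheses (H1--H3) are symmetric in $S$ and $T$: by (H3) the algebra $T$ also satisfies (H1) and (H2) with the same good parameter $\lambda_T = \lambda_S$, and the tensor-product condition $\gr(S_{\tau,0}\otimes_{S_{0,0}} T_{\gamma,0}) = R_{\tau+\gamma}$ is itself symmetric in $\tau$ and $\gamma$. Consequently all the constructions of Section~\ref{section-dualising} apply with the roles of $S$ and $T$ interchanged; in particular $\XX_{T\otimes S} \cong \XX_{S\otimes T} = \XX$ under the obvious identification, and Proposition~\ref{spectral1} yields a convergent spectral sequence
\[
\SExt^p_X(\gr_F\MM',\,\mathcal{O}_X) \ \Longrightarrow \ \gr_F \SSExt^{\,^p}_{T\lQGr}(\MM',\,\XX)
\]
for $\MM'\in T\lqgr$ with a good filtration. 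Applying the argument of part (1) to this spectral sequence gives part (2).

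I would present this concisely: first invoke $\mathscr{S}$ and note that convergence means the abutment has a finite filtration with graded pieces $E_\infty^p$; then observe $E_\infty^p$ is a subquotient of $E_1^p$ (standard spectral-sequence bookkeeping, since $E_{r+1}^p = \ker(d_r)/\operatorname{im}(d_r)$ on $E_r^p$); then note that being a subquotient is transitive and closed under extensions to conclude $\gr\SSExt^p$ is a subquotient of $\SExt^p_X(\gr_F\MM,\mathcal{O}_X)$; finally remark that (H1--H3) are symmetric in $S,T$ so the same spectral sequence exists over $T$, giving (2). The main (and only) obstacle is the small amount of care needed in the extension-closedness step: one must check that the filtration on the abutment $\gr_F\SSExt^p_{S\lQGr}(\MM,\XX)$ has finite length so that ``iterated extension of subquotients is a subquotient'' genuinely applies — but this is immediate since $\gr_F\MM$ is coherent on $X$, so $\SExt^p_X(\gr_F\MM,\mathcal{O}_X)$ vanishes for $p$ outside a bounded range and the $F^\bullet$ on each $\sf K^p$ is a good (hence bounded-below, and after truncation bounded) filtration, making the spectral sequence regular.
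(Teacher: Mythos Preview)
Your approach is the same as the paper's --- invoke Proposition~\ref{spectral1} and use that each $E_{r+1}^p$ is a subquotient of $E_r^p$, then appeal to symmetry for part~(2) --- but you have overcomplicated the middle step and in doing so introduced a false claim.

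In this Bjork-style spectral sequence the index $p$ is the cohomological degree and the filtration degree is absorbed into the internal grading of each $E_r^p$; convergence means precisely that $E_\infty^p = \gr_F \SSExt^{\,^p}_{S\lQGr}(\MM,\XX)$ \emph{as a single object} in $\tR\lQGr$. There is no further filtration on the abutment to unwind. So the argument is exactly one line: $E_{r+1}^p$ is a subquotient of $E_r^p$ (being a kernel modulo an image), the sequence degenerates at some finite page, and subquotient-of-subquotient is subquotient. This is all the paper says.

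Your detour through ``an iterated extension of subquotients of a fixed object is again a subquotient of it'' is not only unnecessary, it is false in general: over a field $\kk$, the module $\kk\oplus\kk$ is an extension of two subquotients of $\kk$ but is not itself a subquotient of $\kk$. You don't actually need this step, so the error is harmless for the conclusion, but you should remove it. Your treatment of part~(2) by the $S\leftrightarrow T$ symmetry of (H1--H3) is exactly what the paper does.
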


  \begin{proof}  Part (1)  follows from the proposition combined with  the observation that
 each term $E^{p}_{r+1}$ is a subquotient of $E^p_r$.   
  As  our hypotheses are symmetric in $S$ and $T$, this also proves  (2).
   \end{proof}

\subsection{Remark}\label{gr-comment}
It is well-known that   filtrations that are not good have bad properties. This is especially true
 when filtering 
objects in $S\lqgr$, or even $U\lqgr$ for a graded unitary ring $U$. For an extreme example,
suppose that $F^0S\subseteq \bigoplus_\tau  S_{\tau,\tau}$ and that 
$0\not=\mathcal{M}=\pi_S({\sf M})\in S\lqgr$. Now  
  filter ${\sf M}$ by $F^0({\sf M})= {\sf M}$. Then $\gr_F{\sf M}$ is killed by $\gr_{\geq 1}\tR\supseteq R_{\geq 1}$
   and so $\gr_F{\sf M}$ is torsion. Thus, in the notation of \ref{sheaf-filt-defn},
 $\gr_F\mathcal{M}=\pi_{\tR} (\gr_F({\sf M}))=0$.

Fortunately for good filtrations this problem does not arise as we have 

\begin{lemma} Let $\mathcal{M}\in S\lqgr$ have a good filtration $({\sf M},F^{\bullet})$ and suppose that 
$\gr_F\mathcal{{\sf M}}=0$. Then $\mathcal M=0$. \end{lemma}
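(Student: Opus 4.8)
The plan is to recover the vanishing of $\mathcal{M}$ from the vanishing of $\gr_F\mathcal{M}$ by passing to a corner ring $S_{\tau,\tau}$, where $\tau\geq\lambda_S$ is a good parameter, and translating ``good filtration'' into the statement that $\gr_F{\sf M}_\tau$ generates $\gr_F{\sf M}$ over $R$ after shifting high enough. First I would replace ${\sf M}$ by ${\sf M}_{\geq\tau}$ for suitable $\tau\geq\lambda_S$; this changes neither $\mathcal{M}$ (by Lemma~\ref{shiftlemma}, since the cokernel is torsion) nor $\gr_F\mathcal{M}=\pi_{\tR}(\gr_F{\sf M})$ (the difference is a torsion graded module, killed by $\pi_{\tR}$). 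Invoking Proposition~\ref{corner-ring} we then have an equivalence $S\lqgr\simeq S_{\tau,\tau}\lmod$ sending $\mathcal{M}$ to $\Phi(\mathcal{M})=S^*_{\mu,\tau}\otimes_{S_{\mu,\mu}}{\sf M}_\mu$ for $\mu\gg\tau$.

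The key step is: if $\gr_F\mathcal{M}=\pi_{\tR}(\gr_F{\sf M})=0$, then $\gr_F{\sf M}$ is a torsion object of $\tR\lgr$, so there is some $\tau'\geq\lambda_S$ with $(\gr_F{\sf M})_\nu=0$ for all $\nu\geq\tau'$. Enlarging $\tau$ we may assume $\tau=\tau'$, hence $\gr_F{\sf M}_\nu=0$, i.e. $F^m{\sf M}_\nu=F^{m-1}{\sf M}_\nu$ for all $m$ and all $\nu\geq\tau$. Because $F^\bullet{\sf M}$ is separated and exhaustive, a filtration on each ${\sf M}_\nu$ which is its own shift must have $F^m{\sf M}_\nu=0$ for all $m$, so ${\sf M}_\nu=0$ for all $\nu\geq\tau$. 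Thus ${\sf M}_{\geq\tau}=0$, whence $\mathcal{M}=\pi_S({\sf M})=\pi_S({\sf M}_{\geq\tau})=0$.

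The only mild subtlety — and the place I would be most careful — is the reduction that makes $\gr_F{\sf M}$ vanish in all \emph{individual} degrees $\nu\geq\tau$ rather than merely being torsion as a graded $R$-module. Here I use that $\gr_F{\sf M}$ is noetherian over $R$ (goodness of the filtration), so its being torsion in $\tR\lgr$ means exactly that it is annihilated by some $R_{\geq\delta}$ and is supported in a bounded range of degrees; since it is finitely generated we may choose $\tau$ past the top degree. One should also note that the argument does not require $\gr_F{\sf M}$ itself to be zero, only $\pi_{\tR}$ of it, which is why the torsion reduction is essential and why the naive ``$\gr_F{\sf M}=0\Rightarrow{\sf M}=0$'' statement is replaced by this more delicate degreewise version. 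Everything else is a routine application of Proposition~\ref{corner-ring}, Lemma~\ref{shiftlemma} and the definitions in~\ref{sheaf-filt-defn}.
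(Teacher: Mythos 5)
Your argument is correct and is essentially the paper's own proof: goodness makes $\gr_F{\sf M}$ noetherian, so $\pi_{\tR}(\gr_F{\sf M})=0$ forces $\gr_F{\sf M}$ to be torsion, and then separatedness and exhaustiveness of $F^\bullet$ give ${\sf M}_\nu=0$ in high degrees, i.e.\ ${\sf M}$ is torsion and $\mathcal{M}=0$. The detour through Proposition~\ref{corner-ring} and the shift to ${\sf M}_{\geq\tau}$ is harmless but unnecessary; the degreewise argument you spell out is exactly what the paper leaves implicit.
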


\begin{proof}  Since $F^\bullet$ is a good filtration, $\gr_F({\sf M})$ is finitely generated. Since $\pi_{\tR}=0$ 
this means that $\gr_F({\sf M})$ is torsion and so  ${\sf M}$ is also torsion. Hence $\mathcal{M}=0$.
\end{proof}

   
 \section{Commutative theory.}\label{section-commutative}
 \bigskip
 \begin{boxcode}  Fix  a finitely 
 generated commutative $\Lambdaplus$-graded algebra $R = \bigoplus_{\lambda\in\Lambdaplus} R_\lambda$
 and  lower $\Lambdaplus$-directed algebras  $S$ and $T$ that satisfy Hypotheses
  (H1--H3) from \ref{filthyp}. Set $X = \Proj (R)$. 
\end{boxcode}

\bigskip

\subsection{}  We  prove results on  the support of $R$-modules and more specifically the support of 
 $\gr \MM$ for  $\MM\in S\lqgr$.  These  provide analogues for projective varieties of results proven in 
 \cite{Le}  for  affine    varieties and our proofs closely mimic Levasseur's.

 \subsection{\bf Notation} \label{support-defn}
  We will always  identify $X=\Proj(R)$ with the set of graded 
  prime ideals of $R$ that contain no irrelevant ideal.
Given  $\mathfrak{p}\in \Proj (R)$,  write $\mathcal{C}_\mathfrak{p}$ for
  the homogeneous elements in $R\smallsetminus \mathfrak{p}$, with $\Lambdawide$-graded
   localization $R_{\mathfrak{p}} = R\mathcal{C}_\mathfrak{p}^{-1}$ and set $R_{(\mathfrak{p})}$ to be the
   degree zero component $(R_{\mathfrak{p}})_0$.  Thus $R_{(\mathfrak{p})}$ is the local ring of $X$
   corresponding to  the complement of the  subscheme $  \VV(\mathfrak{p})\subset X $. Similarly we write
   ${\sf N}_{(\mathfrak{p})}=({\sf N}\otimes_RR_{\mathfrak{p}})_0$ for  ${\sf N}\in R\lGr$. By
    \cite[Propositions~8.2.2 and 9.1.2]{rob} there is an equivalence  $R_{(\mathfrak{p})}\lMod
    \ \buildrel{\sim}\over\longrightarrow\ R_{\mathfrak{p}}\lGr$ 
    given by $M\mapsto R_{\mathfrak{p}}\otimes_{R_{(\mathfrak{p})}} \hskip -3pt M$, with inverse $L\mapsto L_0 $. 
 
Given $\mathcal{L} \in \tR \lqgr = \coh(X)$, its {\it support} \index{Supp, the support of a coherent sheaf}
$\Supp \mathcal{L}$ is defined to be its 
support in $X$:   $$\Supp \mathcal{L} = \{\mathfrak{p}\in \Proj(R) : \mathcal{L}_\mathfrak{p}\not= 0\}.$$
By the multihomogeneous analogue of \cite[Chapter~II, Proposition~5.11(a)]{Ha}, if $\mathcal{L}=\pi_R{\sf L}$ for   
${\sf L}\in R\lgr$,
 then $\mathcal{L}_{\mathfrak{p}}  = {\sf L}_{(\mathfrak{p})}$. Thus  
 $\Supp \mathcal{L} = \Supp {\sf L} \cap \Proj(R),$
 where $\Supp {\sf L}$ denotes the module-theoretic support of ${\sf L}$.
 
  Combining these observations  with the multi-graded analogue of  \cite[Proposition~III.6.8]{Ha} gives the  
  following description of   the sheaf Ext  groups  of $\mathcal{O}_X$-modules,  denoted by $\SExt_X$.
     
\begin{lemma} \label{localise} 
Let $\mathfrak{p}\in \Proj R$ and ${\sf N}\in R\lgr$ with corresponding coherent sheaf $\NN =\pi_R{\sf N}\in R\lqgr$.
Then $  \Ext^t_{R_{(\mathfrak{p})}}({\sf N}_{(\mathfrak{p})} ,\,R_{(\mathfrak{p})}) 
 \cong\  \SExt^t_{X}(\NN,\, \mathcal{O}_X)_{\mathfrak{p}}$ for 
any $t\geq 0$. \qed
  \end{lemma}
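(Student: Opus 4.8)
The plan is to obtain the isomorphism by localising the coherent sheaf $\SExt^t_X(\NN,\mathcal{O}_X)$ at the point $\mathfrak{p}$ and identifying the resulting stalk. Since $R$ is a finitely generated $\kk$-algebra, $X=\Proj(R)$ is a noetherian scheme, and both $\NN=\pi_R{\sf N}$ and $\mathcal{O}_X$ are coherent; hence $\SExt^t_X(\NN,\mathcal{O}_X)$ is a well-defined coherent $\mathcal{O}_X$-module, computed on any affine open $U\subseteq X$ as the $t$-th cohomology of $\SHom_U(\mathcal{F}^{\bullet},\mathcal{O}_U)$ for a finite free resolution $\mathcal{F}^{\bullet}\to\NN|_U$. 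First I would invoke the multi-graded analogue of \cite[Proposition~III.6.8]{Ha}: its proof is purely local, so it transfers verbatim to the $\Lambdaplus$-graded Proj once one knows — as recalled before the statement, via \cite[Propositions~8.2.2 and 9.1.2]{rob} — that $X$ is glued from the spectra of the degree-zero localisations $R_{(f)}$ and that restriction of a coherent sheaf to such a patch corresponds to localisation of a finitely generated module. This yields a natural isomorphism
$$\SExt^t_X(\NN,\mathcal{O}_X)_{\mathfrak{p}}\ \cong\ \Ext^t_{\mathcal{O}_{X,\mathfrak{p}}}\bigl(\NN_{\mathfrak{p}},\,\mathcal{O}_{X,\mathfrak{p}}\bigr)\qquad(t\geq 0).$$

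Next I would identify the two local objects on the right. By the multihomogeneous analogue of \cite[Chapter~II, Proposition~5.11(a)]{Ha} recalled just above — applied both to $\NN$ and to the structure sheaf — one has $\NN_{\mathfrak{p}}={\sf N}_{(\mathfrak{p})}$ and $\mathcal{O}_{X,\mathfrak{p}}=R_{(\mathfrak{p})}$, the latter being by construction the local ring of $X$ at $\mathfrak{p}$. Substituting these identifications into the displayed isomorphism gives $\SExt^t_X(\NN,\mathcal{O}_X)_{\mathfrak{p}}\cong\Ext^t_{R_{(\mathfrak{p})}}({\sf N}_{(\mathfrak{p})},R_{(\mathfrak{p})})$, which is precisely the claim.

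The only step that is more than bookkeeping is the transfer of the classical stalkwise formula for sheaf Ext to the multi-graded Proj: one must check that $X$ genuinely admits an affine open cover by the spectra $\Spec R_{(f)}$ (with $f$ homogeneous), that each such patch is noetherian, that a coherent sheaf restricts on it to the sheafification of a finitely generated module, and that both $\SHom$ and passage to the stalk at $\mathfrak{p}$ commute with this restriction. Granting the multi-graded commutative-algebra facts already cited in the paper, all of this is routine, and I do not anticipate any further obstacle; the substance of the lemma is really just the observation that sheaf Ext is computed locally.
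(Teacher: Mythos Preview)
Your argument is correct and is exactly the one the paper has in mind: the lemma is stated with a \qed\ because it follows immediately from combining the identification $\NN_{\mathfrak{p}}={\sf N}_{(\mathfrak{p})}$, $\mathcal{O}_{X,\mathfrak{p}}=R_{(\mathfrak{p})}$ with the multi-graded analogue of \cite[Proposition~III.6.8]{Ha}, precisely as you spell out.
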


\subsection{}  By \cite[Theorem~(f)]{Ba}. and Lemma~\ref{localise}, 
  $X$ is Gorenstein if and only if  $\SExt^t_{X}(\NN,\, \mathcal{O}_X) = 0 $ for 
all $t>\dim X$ and $\NN\in \coh(X)=R\lqgr$.
Given a  module $M$ over a ring $A$, we write $\ell_A(M)$ for the length of $M$. 
  
\begin{lemma}\label{3.3.3}
Assume that $X$ is Gorenstein.
   Let ${\sf N}\in R\lgr$ with corresponding coherent sheaf $\NN=\pi_R{\sf N}$, and let $t\in\mathbb{N}$. \begin{enumerate}
\item If $\mathfrak{p}\in      \Supp \SExt^t_{X}(\NN,\,\mathcal{O}_X)$, then
     $ht(\mathfrak{p})=\dim R_{(\mathfrak{p})} \geq  t$.

\item Let $\mathfrak{p}$ be a minimal prime in $\Supp \NN$. Then 
  $\Ext^t_{R_{(\mathfrak{p})}}({\sf N}_{(\mathfrak{p})},\,R_{(\mathfrak{p})})=0$ if
  $t\not=\dim\, R_{(\mathfrak{p})}$. If $t=\dim\,  R_{(\mathfrak{p})}$ then 
   $\Ext^t_{R_{(\mathfrak{p})}}({\sf N}_{(\mathfrak{p})},\,R_{(\mathfrak{p})})$ has finite 
  length, equal to  $\ell_{R_{(\mathfrak{p})}}({\sf N}_{(\mathfrak{p})})$.
  
\item $\Supp \SExt^t_{X}(\NN,\,\mathcal{O}_X)=\bigcup_{i=1}^p \VV(\mathfrak{p}_i)\cup \mathcal{D}$, where the
 $\VV(\mathfrak{p}_i)$   run through the irreducible components of $\Supp \NN$ of codimension $t$ in 
   $X$ and $\mathcal{D}$ consists of a
    union of irreducible components of codimension $\geq t$ in $X$, each of 
    which is contained in the union of the irreducible components of $\Supp \NN$ 
     of codimension different from $t$.
 \end{enumerate}
 \end{lemma}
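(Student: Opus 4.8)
\textbf{Proof plan for Lemma~\ref{3.3.3}.}

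The plan is to mimic Levasseur's treatment for affine varieties (as in \cite{Le}) after reducing everything to a commutative local ring via the localization machinery of \ref{support-defn}. Throughout I write $A = R_{(\mathfrak{p})}$, which is a noetherian local ring, and $L = {\sf N}_{(\mathfrak{p})}$, a finitely generated $A$-module; by Lemma~\ref{localise} the sheaf Ext groups $\SExt^t_X(\NN,\OO_X)_{\mathfrak p}$ are just $\Ext^t_A(L,A)$, and by the Bass/Baer characterization quoted before the lemma the Gorenstein hypothesis on $X$ translates into: $A$ is a Gorenstein local ring for every $\mathfrak{p}\in\Proj(R)$, hence $\injdim_A A = \dim A < \infty$.

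For part~(1): if $\Ext^t_A(L,A)\neq 0$ then $t\le\injdim_A A = \dim A$, which is exactly $ht(\mathfrak p) = \dim R_{(\mathfrak p)} \geq t$ (the equality $ht(\mathfrak p) = \dim R_{(\mathfrak p)}$ is the standard dimension formula for the local ring of a point on $X=\Proj R$). For part~(2): $\mathfrak p$ minimal in $\Supp\NN$ means $L$ is a nonzero $A$-module supported only at the maximal ideal of $A$, i.e.\ $L$ has finite length. First I would handle the case where $A$ itself is Artinian ($\dim A = 0$): then $A$ is self-injective, so $\Ext^t_A(L,A) = 0$ for $t>0$, and for $t=0$ one has $\ell_A(\Hom_A(L,A)) = \ell_A(L)$ using Matlis duality over the Gorenstein Artinian ring $A$ (for which $A$ is an injective cogenerator). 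For $\dim A = d > 0$ I would argue by induction on $d$: pick $x$ in the maximal ideal but outside every minimal prime of $A$ (possible since $d>0$ and $L$ has finite length, so $x$ kills $L$ for a suitable power — more precisely one shows $L$ is annihilated by a power of the maximal ideal, hence by some such $x$), so $x$ is a nonzerodivisor on $A$; then $A/xA$ is again Gorenstein local of dimension $d-1$, and the standard change-of-rings isomorphism $\Ext^{t}_{A}(L,A)\cong \Ext^{t-1}_{A/xA}(L, A/xA)$ (valid because $x$ annihilates $L$ and is $A$-regular) reduces the claim to dimension $d-1$, giving vanishing unless $t-1 = d-1$, i.e.\ $t = d$, and in that case $\ell_A(\Ext^d_A(L,A)) = \ell_{A/xA}(\Hom_{A/xA}(L,A/xA)) = \ell_{A/xA}(L) = \ell_A(L)$, closing the induction.

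For part~(3): I would analyze $\Supp\SExt^t_X(\NN,\OO_X)$ prime by prime. A prime $\mathfrak p$ lies in this support iff $\Ext^t_{R_{(\mathfrak p)}}({\sf N}_{(\mathfrak p)}, R_{(\mathfrak p)})\neq 0$; by part~(1) this forces $ht(\mathfrak p)\geq t$. If $\VV(\mathfrak q)$ is an irreducible component of $\Supp\NN$ of codimension exactly $t$, then localizing at $\mathfrak q$ puts us in the situation of part~(2) with $\dim R_{(\mathfrak q)} = t$, so $\Ext^t_{R_{(\mathfrak q)}}({\sf N}_{(\mathfrak q)}, R_{(\mathfrak q)})\neq 0$ and hence $\mathfrak q\in\Supp\SExt^t_X(\NN,\OO_X)$; since this support is closed, all of $\VV(\mathfrak q)$ is contained in it, giving the inclusion $\bigcup\VV(\mathfrak p_i)\subseteq\Supp\SExt^t_X(\NN,\OO_X)$. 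Conversely, any irreducible component $\VV(\mathfrak r)$ of $\Supp\SExt^t_X(\NN,\OO_X)$ satisfies $\codim\VV(\mathfrak r)\geq t$, and if $\mathfrak r\in\Supp\NN$ but is not itself in $\bigcup\VV(\mathfrak p_i)$, I need to see it is contained in the union of the components of $\Supp\NN$ of codimension $\neq t$: localize at a minimal prime of $\Supp\NN$ dominating $\mathfrak r$, or more directly observe that if $\mathfrak r$ were a minimal prime of $\Supp\NN$ it would have to have codimension $= t$ by part~(2) (forcing $\mathfrak r\in\{\mathfrak p_i\}$), so a component $\VV(\mathfrak r)$ not among the $\VV(\mathfrak p_i)$ must sit inside some component of $\Supp\NN$ of codimension $\neq t$ (or, if $\mathfrak r\notin\Supp\NN$, it lands in the leftover piece $\mathcal D$, which one also controls by noting $\Supp\SExt^t\subseteq\Supp\NN$ always holds, so in fact $\mathcal D$ consists of components of $\Supp\NN$). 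Bookkeeping the finitely many minimal primes of $\Supp\NN$ then assembles the stated decomposition.

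\textbf{Main obstacle.} The genuinely delicate point is part~(2), specifically getting the change-of-rings reduction to go through cleanly: one must verify that a finite-length module over the Gorenstein local ring $A$ of positive dimension really is annihilated by a single $A$-regular element (equivalently, by a parameter), and then invoke the precise form of the change-of-rings spectral sequence / long exact sequence that collapses to $\Ext^t_A(L,A)\cong\Ext^{t-1}_{A/xA}(L,A/xA)$ when $x$ is both $A$-regular and $L$-annihilating. Everything else — part~(1), the Artinian base case, and the globalization in part~(3) — is essentially formal once the local statement of part~(2) is in hand.
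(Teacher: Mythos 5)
Your plan is correct, and for parts (1) and (3) it is essentially the paper's own argument: nonvanishing of $\Ext^t_{R_{(\mathfrak p)}}({\sf N}_{(\mathfrak p)},R_{(\mathfrak p)})$ forces $t\leq \injdim R_{(\mathfrak p)}=\dim R_{(\mathfrak p)}=ht(\mathfrak p)$ by Bass, and (3) is exactly the two inclusions you state (codimension-$t$ components of $\Supp\NN$ lie in $\Supp\SExt^t_X(\NN,\OO_X)$ via part (2) at the minimal prime, and conversely $\Supp\SExt^t_X(\NN,\OO_X)\subseteq\Supp\NN$ with every prime there of height $\geq t$), followed by the same bookkeeping — your remark about what happens when a component of $\Supp\SExt^t$ sits inside a codimension-$t$ component could be tightened (such a component would then fail to be maximal in $\Supp\SExt^t$, since the codimension-$t$ components are entirely contained in it), but this is cosmetic. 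The genuine difference is in part (2), the step you flag as the main obstacle. The paper does not induct on $\dim R_{(\mathfrak p)}$ at all: it quotes Bass's computation that over the Gorenstein local ring $R_{(\mathfrak p)}$ one has $\Ext^t_{R_{(\mathfrak p)}}(k_{\mathfrak p},R_{(\mathfrak p)})=0$ for $t\neq ht(\mathfrak p)$ and $=k_{\mathfrak p}$ for $t=ht(\mathfrak p)$, and then runs a d\'evissage on the length of the finite-length module ${\sf N}_{(\mathfrak p)}$ using the long exact sequence; this gives both the vanishing and the length equality in two lines, with no need for a regular element annihilating the module or for Rees's change-of-rings isomorphism. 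Your route — induct on dimension, using self-injectivity and Matlis duality in the Artinian base case and $\Ext^t_A(L,A)\cong\Ext^{t-1}_{A/xA}(L,A/xA)$ for an $A$-regular $x$ killing $L$ (which exists by prime avoidance since the Cohen--Macaulay ring $A$ has no associated prime equal to $\mathfrak m$) — is also sound, provided you note separately that $\Hom_A(L,A)=0$ when $\dim A>0$ (the displayed isomorphism only makes sense for $t\geq 1$, though this vanishing is part of Rees's lemma and is immediate from $x$ being regular). In short: your argument trades the single citation of Bass's Proposition~2.9 plus length induction for a dimension induction resting on Rees and Matlis duality; both are standard, the paper's being the more economical, yours the more self-contained from the Artinian case.
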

 
   \begin{proof}  (1)  
  If $\mathfrak{p}\in     \Supp \SExt^t_{X}(\NN,\,\mathcal{O}_X)$, then       
  $\Ext^t_{R_{(\mathfrak{p})}}({\sf N}_{(\mathfrak{p})},\, R_{(\mathfrak{p})})\not=0$  by  Lemma~\ref{localise}. 
  Since $R_{(\mathfrak{p})} 
$ is Gorenstein,  \cite[Theorem]{Ba} implies that     
$ht (\mathfrak{p}) = \injdim R_{(\mathfrak{p})} = \dim R_{(\mathfrak{p})} \geq  t$.

    (2) If $\ell_{R_{(\mathfrak p)}}({\sf N}_{(\mathfrak{p})})=\infty $, then   ${\sf N}_{(\mathfrak{q})}\not=0$ for some
     $\mathfrak{q}\subsetneq \mathfrak{p}$, contradicting the minimality of $\mathfrak{p}$ in $\Supp\NN$.
    Hence $\ell_{R_{(\mathfrak{p})}}({\sf N}_{(\mathfrak{p})})<\infty $. Set
     $k_{\mathfrak{p}} = R_{(\mathfrak{p})}/ \mathfrak{p}_{(\mathfrak{p})}$.  Then, as
     $R_{(\mathfrak{p})}$ is   Gorenstein,   \cite[Proposition~2.9]{Ba} implies that 
     $$\Ext^t_{R_{(\mathfrak{p})}} ( k_{\mathfrak{p}}, R_{(\mathfrak{p})}) = 
     \begin{cases} 0 \quad & \text{if $t\neq ht(\mathfrak{p})$} \\ k_{\mathfrak{p}} & \text{if $t =  ht(\mathfrak{p})$.} \end{cases}$$
  Let $L$ be an $R_{(\mathfrak{p})}$-module of finite length.
  By  induction on the  length of $L$,   it follows that  
   $\Ext^t_{R_{(\mathfrak{p})}}(L, R_{(\mathfrak{p})}) = 0$ if $t\not= ht(\mathfrak{p})$, while
   $\ell_{R_{(\mathfrak p)}} (\Ext^{ht(\mathfrak{p})}_{R_{(\mathfrak{p})}}(L, R_{(\mathfrak{p})})) =
   \ell_{R_{(\mathfrak{p})}}(L_{(\mathfrak{p})})<\infty$.  So (2) holds.

    (3)   If $\VV(\mathfrak{p})$  is an irreducible component of
    $\Supp \NN$, then $\mathfrak{p}$ is minimal in $\Supp \NN$. Thus if 
    $t = \codim \VV(\mathfrak{p}) = \injdim  R_{(\mathfrak{p})}$, 
    then part~(2) and    Lemma~\ref{localise} show that  $\mathfrak{p}\in   \Supp \SExt^t_{X}(\NN,\,\mathcal{O}_X)$
    and hence that $\VV(\mathfrak{p})\subseteq \Supp \SExt^t_{X}(\NN,\,\mathcal{O}_X)$.
Conversely, if $\mathfrak{q}\in \Supp  \SExt^t_{X}(\NN,\,\mathcal{O}_X)$, then
$ht(\mathfrak{q})\geq t$ by part~(1).  By    Lemma~\ref{localise},
$0\not= \SExt^t_{X}(\NN,\,\mathcal{O}_X)_\mathfrak{q} \cong 
\Ext^t_{R_{(\mathfrak{q})}}({\sf N}_{(\mathfrak{q})},\,R_{(\mathfrak{q})})$
and so $\mathfrak{q}\in \Supp \NN$.   \end{proof}

 \subsection{\bf Definition}\label{char-defn} \index{$\Char$, the characteristic variety of!an object in $S\lqgr$} 
 Let ${\sf M}\in S\lgr$ and 
pick a good filtration $F^{\bullet}$  for ${\sf M}$.  Following 
 \cite[Definition~2.6]{GS2}, we define the \emph{characteristic
variety of $\sf M$} to be  
$\Char {\sf M} =\Supp (\gr_F{\sf M})\subseteq X$.
The {\it characteristic dimension of $\sf M$} \index{$\chdim$, the characteristic dimension}is defined to be 
 $\chdim(\sf M)=\dim \Char\sf M$. 
 If $M\in S_{\lambda,\lambda}\lmod$, for a good parameter $\lambda$, 
 we write $ \Char(M)=\Char(\Psi(M))$, in the notation of Proposition~\ref{corner-ring}.
 
\medskip 
By the proof of  \cite[Lemma~2.5(3)]{GS2},  $\Char {\sf M} $  is independent of the choice of $F^{\bullet}$.
Moreover, $\Char {\sf N}=\emptyset$ for any torsion module $\sf N$ and so
 $\Char {\sf M}=\Char {\sf M}'$ whenever $\pi_S({\sf M})=\pi_S({\sf M}')$.
 Thus $\Char\MM$  and $\chdim(\MM)$ are  also     well-defined for  $\MM\in S\lqgr$, or indeed for $\MM\in R\lqgr$.  
 
 \subsection{} Recall the cohomology groups  $\SSExt$  
  defined in   \eqref{ext-defn3} for objects in  $S\lQGr$.

\begin{proposition}  \label{3.3.4-extra} Assume that $X$ is Gorenstein. Suppose that  $\MM\in S\lQGr$ has 
a   good filtration $F$ and give  $ \SSExt^{\,^t}_{S\lQGr}(\MM,\XX)$ 
   the induced filtration  $F$, as defined in Proposition~\ref{spectral1}.
  If $\VV(\mathfrak{p})$ is an irreducible component of $\Char \MM$ of 
  codimension  $t$ in $X$,
  then   $$\left(\gr_F  \SSExt^{\,^t}_{S\lQGr}(\MM,\, \XX) \right)_\mathfrak{p} \cong
   \SExt^t_{X} \left( \gr_F(\MM),\, \mathcal{O}_{X}\right)_\mathfrak{p}.$$
   \end{proposition}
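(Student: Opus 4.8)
The plan is to localise the spectral sequence $\mathscr{S}$ of \eqref{spectral1-equ} at the point $\mathfrak{p}$ and to observe that it then collapses onto the single column $p=t$. Since $\VV(\mathfrak{p})$ is an irreducible component of $\Char\MM=\Supp(\gr_F\MM)$, the prime $\mathfrak{p}$ is a minimal prime of $\Supp(\gr_F\MM)$, and the codimension hypothesis amounts to $ht(\mathfrak{p})=\dim R_{(\mathfrak{p})}=t$ in the conventions of Notation~\ref{support-defn} and Lemma~\ref{3.3.3}. Combining Lemma~\ref{localise} with Lemma~\ref{3.3.3}(2) — which applies because $X$ is Gorenstein and $\mathfrak{p}$ is minimal in $\Supp(\gr_F\MM)$, taking there ${\sf N}=\gr_F{\sf M}$ — shows that the $E_1$-terms of $\mathscr{S}$ localise to
$$\big(\SExt^p_X(\gr_F\MM,\,\mathcal{O}_X)\big)_\mathfrak{p}\ \cong\ \Ext^p_{R_{(\mathfrak{p})}}\big((\gr_F{\sf M})_{(\mathfrak{p})},\,R_{(\mathfrak{p})}\big)\ =\ 0\qquad\text{for all } p\neq t.$$

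Next I would localise the whole spectral sequence. By its construction in the proof of Proposition~\ref{spectral1}, $\mathscr{S}$ is the image under the exact functor $\pi_{\tR}$ of the Bj\"ork spectral sequence of the bounded-below filtered complex $({\sf K}^{\bullet},F^{\bullet})$ exhibited there, whose graded pieces are finitely generated; hence applying the (exact) stalk functor $(-)_\mathfrak{p}$ produces a convergent spectral sequence of $R_{(\mathfrak{p})}$-modules with first page $\big(\SExt^p_X(\gr_F\MM,\mathcal{O}_X)\big)_\mathfrak{p}$ and abutment $\big(\gr_F\SSExt^{\,^p}_{S\lQGr}(\MM,\XX)\big)_\mathfrak{p}$. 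By the previous paragraph this page is concentrated in the column $p=t$; since the differential $d_r$ shifts the column by $r$, for every $r\geq 1$ both the differential into and the differential out of the $t$-th column vanish after localisation, so the localised spectral sequence degenerates at $E_1$ and the resulting isomorphism of the abutment in degree $t$ with the $E_1$-term gives
$$\big(\gr_F\SSExt^{\,^t}_{S\lQGr}(\MM,\,\XX)\big)_\mathfrak{p}\ \cong\ \big(\SExt^t_X(\gr_F\MM,\,\mathcal{O}_X)\big)_\mathfrak{p},$$
which is the assertion.

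I expect the only genuine point needing care to be the bookkeeping that localisation at $\mathfrak{p}$ may be interchanged with the spectral sequence $\mathscr{S}$ — that is, that exactness of $(-)_\mathfrak{p}$ (and of $\pi_{\tR}$) lets one pass it through the construction of $\mathscr{S}$ from the filtered complex in the proof of Proposition~\ref{spectral1} and through the convergence statement there. Everything else is the formal collapse argument above, together with the standard identification of $\codim_X\VV(\mathfrak{p})$ with $ht(\mathfrak{p})=\dim R_{(\mathfrak{p})}$ used already in Lemma~\ref{3.3.3}.
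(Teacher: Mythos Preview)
Your proposal is correct and is essentially the same argument as the paper's: localise the spectral sequence $\mathscr{S}$ of Proposition~\ref{spectral1} at $\mathfrak{p}$, use Lemma~\ref{3.3.3}(2) (via Lemma~\ref{localise}) to see that $(E_1^p)_\mathfrak{p}=0$ for $p\neq t$, and conclude degeneration. The paper handles the bookkeeping you flag simply by noting that exactness of $(-)_\mathfrak{p}$ gives $(E^\ell_r)_\mathfrak{p}=H^\ell\big((E^\bullet_{r-1})_\mathfrak{p}\big)$, so the localised pages can be computed by recurrence directly from the localised $E_1$-page.
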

   
\begin{proof}  We use the spectral sequence $\mathscr{S}$ from Proposition~\ref{spectral1}.
Here  $E_1^t= \SExt^t_{X}(\gr_F \MM,\,\mathcal{O}_X)$, and by definition  
  $E_m^\ell=H^\ell(E^\bullet_{m-1},d)$  for $\ell\geq 0$. We have
$E^t_m = E^t_\infty =  \gr_F  \SSExt^{\,^t}_{S\lQGr}(\MM,\,\XX)  $ for
 all $m\gg 0$. 
  Clearly,  $\big(E^\ell_r\big)_\mathfrak{p} = H^\ell\big(( E^\bullet_{r-1})_\mathfrak{p}\big)$ and so, by  Lemma~\ref{localise}, 
    $$(E^\ell_1)_\mathfrak{p} =\SExt^\ell_{X}(\gr_F \MM,\,\mathcal{O}_X)_\mathfrak{p}
   \cong \Ext^\ell_{R_{(\mathfrak{p})}}\bigl((\gr_F{\sf M})_{(\mathfrak{p})},\,R_{(\mathfrak{p})}\bigr).$$
  Since $\mathfrak{p}$ is a minimal prime ideal in $\Char \MM = \Supp(\gr_F \MM)$, 
  Lemma~\ref{3.3.3}(2) implies that the only nonzero term in $(E^\bullet_1)_\mathfrak{p}$ is
  $ \Ext^t_{R_{(\mathfrak{p})}}\bigl((\gr_F{\sf M})_{(\mathfrak{p})},\,R_{(\mathfrak{p})}\bigr)$.  Therefore, by recurrence,  
  $(E^\bullet_m)_\mathfrak{p} = (E^\bullet_1)_\mathfrak{p}$ for all $m\geq 1$.
    By Proposition~\ref{spectral1}, this  implies that
    $$\left(\gr_F  \SSExt^{\,^t}_{S\lQGr}(\MM,\,\XX) \right)_\mathfrak{p}= \left(E^\bullet_\infty\right)_\mathfrak{p}
     = \left(E^\bullet_{1}\right)_\mathfrak{p} = \SExt^t_{X} \left( \gr_F(\MM),\, \mathcal{O}_{X}\right)_\mathfrak{p},$$
     as required.
     \end{proof}

   \subsection{}\label{3.3.4}  The main result of this section is the following analogue of 
   \cite[Corollary~3.3.4]{Le}.
   
  \begin{corollary} Assume that $X$ is Gorenstein.
    Let $\MM\in S\lQGr$ with a good filtration $F$
    and pick $t\in \mathbb{N}$. Then
    $$\Char \SSExt^{\,^t}_{S\lQGr }(\MM,\,\XX)      \ = \ \bigcup_{i=1}^p  
    \VV(\mathfrak{p}_i) \cup \mathcal{D},$$
    where the $  \VV(\mathfrak{p}_i)$ are the irreducible components of 
 $\Char\MM$ of codimension $t$ in $X$ and $\mathcal{D}$ consists of a
    union of irreducible components of  codimension $\geq t$ in $X$, each of 
    which is contained in the union of the irreducible components of 
     $\Char \MM$   of codimension different from $t$.   
     \end{corollary}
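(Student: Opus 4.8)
The plan is to obtain the corollary as an essentially formal consequence of three earlier facts: the subquotient statement of Corollary~\ref{subquotient}(1), the local isomorphism of Proposition~\ref{3.3.4-extra}, and the commutative computation of Lemma~\ref{3.3.3}(3). Throughout, $\gr_F$ refers to the associated graded sheaf with respect to the filtration on $\SSExt^{\,^t}_{S\lQGr}(\MM,\XX)$ constructed in Proposition~\ref{spectral1}; I would write $\NN=\gr_F\MM\in\coh(X)$, so that by Definition~\ref{char-defn} one has $\Char\MM=\Supp\NN$ and $\Char\SSExt^{\,^t}_{S\lQGr}(\MM,\XX)=\Supp\gr_F\SSExt^{\,^t}_{S\lQGr}(\MM,\XX)$.

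First I would prove the inclusion ``$\subseteq$''. By Corollary~\ref{subquotient}(1) the sheaf $\gr_F\SSExt^{\,^t}_{S\lQGr}(\MM,\XX)$ is a subquotient of $\SExt^t_X(\NN,\OO_X)$, so its support is contained in $\Supp\SExt^t_X(\NN,\OO_X)$. By Lemma~\ref{3.3.3}(3), this support equals $\bigcup_{i=1}^p\VV(\mathfrak p_i)\cup\mathcal D$, where the $\VV(\mathfrak p_i)$ are precisely the irreducible components of $\Char\MM=\Supp\NN$ of codimension $t$ in $X$, and $\mathcal D$ is a union of irreducible closed subsets of codimension $\geq t$, each contained in the union of the irreducible components of $\Char\MM$ of codimension different from $t$. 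Next I would establish the reverse inclusion on the $\VV(\mathfrak p_i)$ part. Since $\VV(\mathfrak p_i)$ is an irreducible component of $\Char\MM$ of codimension $t$, Proposition~\ref{3.3.4-extra} gives $\big(\gr_F\SSExt^{\,^t}_{S\lQGr}(\MM,\XX)\big)_{\mathfrak p_i}\cong\SExt^t_X(\NN,\OO_X)_{\mathfrak p_i}$. The right-hand side is nonzero: $\mathfrak p_i$ is minimal in $\Supp\NN$, so by Lemma~\ref{localise} it is isomorphic to $\Ext^t_{R_{(\mathfrak p_i)}}(\NN_{(\mathfrak p_i)},R_{(\mathfrak p_i)})$, which by Lemma~\ref{3.3.3}(2) has finite nonzero length because $t=\dim R_{(\mathfrak p_i)}=\codim\VV(\mathfrak p_i)$. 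Hence $\mathfrak p_i\in\Char\SSExt^{\,^t}_{S\lQGr}(\MM,\XX)$, and since this set is Zariski closed, $\VV(\mathfrak p_i)\subseteq\Char\SSExt^{\,^t}_{S\lQGr}(\MM,\XX)$.

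Combining the two inclusions gives $\bigcup_{i=1}^p\VV(\mathfrak p_i)\subseteq\Char\SSExt^{\,^t}_{S\lQGr}(\MM,\XX)\subseteq\bigcup_{i=1}^p\VV(\mathfrak p_i)\cup\mathcal D$, and the remainder is bookkeeping with irreducible components. I would decompose $\Char\SSExt^{\,^t}_{S\lQGr}(\MM,\XX)$ into its irreducible components. Each $\VV(\mathfrak p_i)$ is one of them: were it strictly contained in some other component $Z$, then $Z$, being irreducible, would lie inside some $\VV(\mathfrak p_j)$ or inside an irreducible component $W$ of $\mathcal D$, forcing either $\VV(\mathfrak p_i)\subsetneq\VV(\mathfrak p_j)$ (hence $\codim\VV(\mathfrak p_i)>t$) or $\VV(\mathfrak p_i)\subsetneq W$ (hence $\codim W<t$), both contradicting the codimension constraints. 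Any remaining component $Z$ lies in $\bigcup_j\VV(\mathfrak p_j)\cup\mathcal D$; being irreducible it lies in some $\VV(\mathfrak p_j)$ or some component $W$ of $\mathcal D$, but the former would force $Z=\VV(\mathfrak p_j)$ by maximality of $Z$ (as $\VV(\mathfrak p_j)\subseteq\Char\SSExt^{\,^t}_{S\lQGr}(\MM,\XX)$), contradicting that $Z$ is a remaining component. Hence $Z\subseteq W$, so $\codim Z\geq\codim W\geq t$ and $Z$ is contained in the union of the irreducible components of $\Char\MM$ of codimension different from $t$. Taking $\mathcal D$ (abusively) to be the union of these remaining components yields the asserted equality.

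The genuine content of the argument lies entirely in Proposition~\ref{3.3.4-extra} and Lemma~\ref{3.3.3}; beyond invoking these, the present argument is formal. The single point requiring care, and the one I would treat as the main obstacle, is this last reorganisation of irreducible components: one must check that discarding and re-grouping the non-codimension-$t$ components preserves the hereditary description of $\mathcal D$, i.e.\ that every such component is still contained in a component of the $\mathcal D$ supplied by Lemma~\ref{3.3.3}(3) and hence in the union of the irreducible components of $\Char\MM$ of codimension different from $t$.
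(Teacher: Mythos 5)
Your proposal is correct and follows essentially the same route as the paper: the inclusion $\Char \SSExt^{\,^t}_{S\lQGr}(\MM,\XX)\subseteq\bigcup\VV(\mathfrak{p}_i)\cup\mathcal{D}$ via Corollary~\ref{subquotient} and Lemma~\ref{3.3.3}(3), and the nonvanishing of each codimension-$t$ component via Proposition~\ref{3.3.4-extra} together with Lemma~\ref{3.3.3} (the paper cites part (3) where you unwind part (2) through Lemma~\ref{localise}, which amounts to the same computation). Your final reorganisation of irreducible components is a routine bookkeeping step that the paper leaves implicit, not a genuinely different argument.
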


    \begin{proof}   
    By  Corollary~\ref{subquotient}, and under the induced
     filtrations, we know that 
    $\gr \SSExt^{\,^t}_{S\lQGr}(\MM,\,\XX)$ is a subquotient of 
    $\SExt^ t_{X}(\gr_F  \MM,\,\mathcal{O}_X)$.
    Therefore, $$\Char \SSExt^{\,^t}_{S\lQGr}(\MM,\,\XX) \subseteq
     \Supp \SExt^t_{X} (\gr_F  \MM,\,\mathcal{O}_X).$$
  Lemma~\ref{3.3.3}(3) then implies that 
  $\Char \SSExt^{\,^t}_{S\lQGr }(\MM,\,\XX)     \subseteq  \bigcup_{i=1}^p  
    \VV(\mathfrak{p}_i) \cup \mathcal{D}$.

It  remains  to show 
    that each  irreducible component $\VV(\mathfrak{q})$ of $\Char\MM$ 
    of codimension $t$
    actually appears in $\Char \SSExt^{\,^t}_{S\lQGr}(\MM,\,\XX)$. 
    However, by Lemma~\ref{3.3.3}(3), such a $\VV(\mathfrak{q})$ is also an irreducible 
    component of   $\Supp \SExt^t_{X}(\gr_F \MM,\, \mathcal{O}_X)$.
 Therefore, by Proposition~\ref{3.3.4-extra}, 
 $\left(\gr \SSExt^{\,^t}_{S\lQGr}(\MM,\, \XX) \right)_{\mathfrak{q}}\not=0$,
 and so   $\VV(\mathfrak{q})$ does indeed appear in  
     $\Char \SSExt^{\,^t}_{S\lQGr}(\MM,\,\XX)$. 
    \end{proof}


\clearpage
\section{The double Ext spectral sequence.}\label{section-spectral}
\bigskip

 \begin{boxcode}    
 Fix  lower $\Lambdaplus$-directed algebras
 $S$ and $T$   and a finitely generated commutative $\Lambdaplus$-graded algebra
  $R = \bigoplus_{\lambda\in\Lambdaplus} R_\lambda$ that satisfy (H1--H3)  from \ref{filthyp}.
  Set $X = \Proj (R)$.
\end{boxcode}
\bigskip

  \subsection{} One of the most useful tools for applying homological techniques to enveloping algebras and other filtered algebras is the notion of an Auslander-Gorenstein ring and the related double Ext spectral sequence
   \cite[Chapter~2,Theorem~4.15]{Bj1}.
 As we prove in  Theorem~\ref{Bj4.15-NC},  these concepts have direct analogues for directed algebras.

    \subsection{Hypothesis}  \label{ample}   The following hypothesis  will be in 
    force for the rest of the section.

  \begin{enumerate}   
 \item[(H4)]  $X$ is Gorenstein, $\Spec R_0$ is normal, and the canonical morphism  $X  \to \Spec R_0 $ is birational. 

  \end{enumerate}
    
    \subsection{\bf Lemma}\label{EndRv}
\emph{Pick    $\nu\in \Lambda$ such that $R_\nu\not=0$. Then $R_0=\End_{R_0}(R_\nu ) $.}

\begin{proof}   By (H2), $R$ is a domain and so  the field of rational functions $\kk(X)$ equals 
$R[\mathcal{C}_R^{-1}]_0$, where $\mathcal{C}_R$ denotes the set of nonzero homogeneous elements of $R$. 
By (H4), $\kk(X)=Q(R_0)$, the field of fractions of $R_0$. Therefore
 $R_\nu x^{-1}\subseteq \kk(X)$ for any  $0\not=x\in R_\nu$ 
 and hence $R_\nu x^{-1}y\subseteq R_0$ for some $0\not=y\in R_0$.

Set $E=\End_{R_0}(R_\nu )$.   By the last paragraph, we may identify 
$E=\{\theta\in \kk(X) : \theta R_\nu\subseteq R_\nu\}\supseteq R_0.$
 For any $\omega\geq \nu$, (H2) implies that   $R_\omega = R_{\nu}R_{\omega-\nu}$ and hence 
 that $E R_\omega\subseteq R_\omega$. 
Therefore, if $I=R_{\geq \nu}$   then $E\subset F=\End_R(I)$.  Since $R$ is a $\Lambda$-graded domain, so is $F$. 
Moreover, $F$ is a finitely generated $R$-module 
since  $F\cong Fx\subseteq R$ for any 
$0\not=x\in R_\nu$. By the same observation, $R$ and $F$ have the same graded quotient ring
$R[\mathcal{C}_R^{-1}] = F[\mathcal{C}_F^{-1}]$, and hence $F[\mathcal{C}_F^{-1}]_0=\kk(X)=Q(R_0)$.

By restriction to degree zero, it follows that  
 $F_0$and hence $E$  are   finitely generated as  $R_0$-modules. Moreover, 
 $$Q(R_0)\subseteq Q(E)\subseteq Q(F_0)\subseteq F[\mathcal{C}_F^{-1}]_0 =Q(R_0).$$
As $R_0$ is integrally closed by (H4), this implies that $R_0=E$.
\end{proof}

\subsection{}
   The next result is fundamental to our approach, since it implies that 
   $\SSExt^{\bullet}(-,\XX)$ for  directed algebras plays the r\^ole of 
   $\Ext^{\bullet}(-,R)$ for a unital algebra $R$ and it is this that allows us to mimic 
   the homological approach of Gabber \cite{Le}. 
 Recall the definition of acyclic objects in $S\lqgr$ from Definition~\ref{acyclic-sect}.
 
 \begin{theorem}\label{S-vee2} Let $\nu\geq \lambda_S$. Then

  {\rm (1)} $\mathscr{S}_{\nu}^\vee$ is an acyclic sheaf in $T\lQGr$, and
  
  {\rm (2)}  $\mathscr{S}_{\nu}^{\vee\vee}\cong \mathscr{S}_{\nu}$ as objects in $S\lQGr$.
\end{theorem}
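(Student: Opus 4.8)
The plan is to reduce both statements to statements about finitely generated projective modules over a single unital ring and a computation of sheaf-$\SExt$ groups on $X$, exploiting that $\mathscr{S}_\nu$ is a projective object in $S\lQGr$ (Corollary~\ref{projective}(1)) and that its dual was identified concretely in Lemma~\ref{S-vee}. Fix $\tau = \nu \geq \lambda_S = \lambda_T$ and set $U = T_{\phi,\phi}$ for a fixed $\phi \geq \lambda_T$; via Proposition~\ref{corner-ring} (for $T$) we may compute everything in $U\lMod$. By Lemma~\ref{S-vee}(1), $\mathscr{S}_\nu^{\ve} \cong \bigoplus_{\gamma \geq \lambda_T} S_{\nu,0}\otimes_{S_{0,0}} T_{\gamma,0}$ as a graded $T$-module, and under $\Phi_\phi$ this corresponds to the $U$-module $S_{\nu,0}\otimes_{S_{0,0}} T_{\phi,0}$, which is a finitely generated projective left $U$-module (it is a direct summand of a free one, since $T_{\phi,0}$ is a progenerator over $S_{0,0}=T_{0,0}^{\opp}$ and $S_{\nu,0}$ is projective over $S_{\nu,\nu}$, hence — after applying the Morita bimodule $S^*_{\phi,0}$ — projective over $U$; alternatively argue directly that $\mathscr{S}_\nu^\vee$ is a projective object in $T\lQGr$).

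\textbf{Part (1).} Since $\mathscr{S}_\nu^\vee$ corresponds under $\Phi_\phi$ to a projective object in $T\lQGr$ (equivalently a projective $U$-module), $\Ext^i_{T\lQGr}(\mathscr{S}_\nu^\vee, -) = 0$ for all $i>0$ on every object of $T\lQGr$. In particular $\SSExt^i_{T\lQGr}(\mathscr{S}_\nu^\vee, \XX_{T\otimes S}) = 0$ for $i>0$ — the summands $\Ext^i_{T\lQGr}(\mathscr{S}_\nu^\vee, \pi_T((\XX_{T\otimes S})_{*,\delta}))$ all vanish before applying $\pi_S$ — so $\mathscr{S}_\nu^\vee$ is an acyclic sheaf in $T\lQGr$. (Strictly one should check the ``opposite'' roles of $S$ and $T$ match the conventions of Definition~\ref{acyclic-sect}; this is immediate from $S_{00} = T_{00}^{\opp}$ and the symmetry of (H1--H3).) This part I expect to be routine.

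\textbf{Part (2).} The assertion $\mathscr{S}_\nu^{\vee\vee} \cong \mathscr{S}_\nu$ in $S\lQGr$ is the substantive one. Using Lemma~\ref{S-vee}(2), for $\phi \geq \lambda_T$ we have
$$(\mathscr{S}_\nu^\vee)^{\ve} \ \cong\ \bigoplus_{\mu\geq\lambda_S} \Hom_{U}\bigl(S_{\nu,0}\otimes_{S_{0,0}}T_{\phi,0},\ S_{\mu,0}\otimes_{S_{0,0}}T_{\phi,0}\bigr),\qquad U = T_{\phi,\phi}.$$
The natural ``evaluation'' map $\mathscr{S}_\nu \to \mathscr{S}_\nu^{\vee\vee}$ (adjoint to the pairing $\mathscr{S}_\nu \otimes \mathscr{S}_\nu^\vee \to \XX$) is a morphism in $S\lQGr$, and the plan is to show it is an isomorphism after passing to $S_{\nu,\nu}\lMod$ via $\Phi_\nu$ (Proposition~\ref{corner-ring}) — i.e. to show, for $\mu \gg 0$, that the induced map
$$S_{\mu,\nu}\ \longrightarrow\ \Hom_{U}\bigl(S_{\nu,0}\otimes_{S_{0,0}}T_{\phi,0},\ S_{\mu,0}\otimes_{S_{0,0}}T_{\phi,0}\bigr)$$
is bijective. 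Here is where Hypothesis (H4) and Lemma~\ref{EndRv} enter: I would put the natural tensor-product filtrations on both sides, pass to associated graded, and use (H3) to identify $\gr(S_{\sigma,0}\otimes_{S_{0,0}}T_{\phi,0}) = R_{\sigma+\phi}$; the map on associated graded then becomes multiplication $R_{\mu-\nu} \to \Hom_{R_0}(R_{\nu+\phi}, R_{\mu+\phi})$ (using that $U$ has associated graded a commutative ring with degree-$0$ part $R_0$, after suitable identification), and since $R$ is a domain with $R_m R_n = R_{m+n}$ (H2), Lemma~\ref{EndRv} gives $\End_{R_0}(R_{\nu+\phi}) = R_0$, forcing this graded map to be injective; surjectivity for $\mu \gg 0$ follows because $R$ is generated in degree one-ish / from $R_{\nu+\phi}\cdot R_{\mu-\nu} = R_{\mu+\phi}$. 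A standard filtered-injective/filtered-surjective argument (as in the proof of Lemma~\ref{S-vee}(1)) then lifts bijectivity from $\gr$ to the modules themselves, for $\mu \gg 0$, which suffices since $\Phi_\nu$ only sees large degrees.

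\textbf{Main obstacle.} The delicate point is matching filtrations and associated-graded rings correctly in Part (2): one must verify that the filtration on $\Hom_{U}(S_{\nu,0}\otimes T_{\phi,0}, S_{\mu,0}\otimes T_{\phi,0})$ induced from $\mathscr{S}_\nu^{\vee\vee}$ has associated graded equal to (a submodule of) $\Hom_{R_0}(R_{\nu+\phi}, R_{\mu+\phi})$ compatibly with the evaluation map — essentially an application of Lemma~\ref{goodhom}/Proposition~\ref{spectral1} machinery in degree zero — and then that the graded endomorphism ring collapses to $R_0$ via Lemma~\ref{EndRv}. Everything else is bookkeeping with the Morita equivalences of Proposition~\ref{corner-ring} and the concrete descriptions in Lemma~\ref{S-vee}.
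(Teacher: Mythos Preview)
Your approach to Part~(2) is essentially the paper's approach and is correct in outline. The paper makes one simplification you might adopt: rather than analysing $S_{\mu,\nu}\to\Hom_U(Z,\,S_{\mu,0}\otimes T_{\phi,0})$ for variable $\mu\gg0$, it uses that $S_{\gamma,\nu}$ is a projective right $S_{\nu,\nu}$-module (valid since $\nu\geq\lambda_S$) to reduce to the single map $\theta:S_{\nu,\nu}\to\End_U(Z)$ with $Z=S_{\nu,0}\otimes_{S_{0,0}}T_{\phi,0}$. Then $\gr\theta$ factors through $R_0\to\End_{R_0}(R_{\nu+\phi})$, which is an isomorphism by Lemma~\ref{EndRv} directly, without needing the stronger statement $\Hom_{R_0}(R_{\nu+\phi},R_{\mu+\phi})=R_{\mu-\nu}$ that your version implicitly requires.

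There is, however, a genuine gap in your Part~(1). You assert that $\mathscr{S}_\nu^\vee$ corresponds, under $\Phi_\phi$, to a projective $T_{\phi,\phi}$-module, arguing via ``$T_{\phi,0}$ is a progenerator over $S_{0,0}=T_{0,0}^{\opp}$'' and ``$S_{\nu,0}$ is projective over $S_{\nu,\nu}$''. Neither claim is available: Hypothesis~(H1) only says that $\lambda_S=\lambda_T$ is a good parameter, not that $0$ is good, so neither $T_{\phi,0}$ nor $S_{\nu,0}$ need be a progenerator (or even projective) on the relevant side. Indeed, at the associated-graded level the module corresponds to $R_{\nu+\phi}$ over $R_0$, which is torsion-free of rank one but typically not projective (by (H4), $R_0$ is merely normal). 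So $\mathscr{S}_\nu^\vee$ is generally \emph{not} projective in $T\lQGr$, and you cannot deduce acyclicity that way.

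The paper's argument for Part~(1) is instead purely on the graded side: by Corollary~\ref{easier2} one has $\gr_F\mathscr{S}_\nu^\vee\cong\mathcal{O}_X(\nu)$, a line bundle on $X$, hence $\SExt^j_X(\gr_F\mathscr{S}_\nu^\vee,\mathcal{O}_X)=0$ for $j>0$. Corollary~\ref{subquotient}(2) then forces $\gr_F\SSExt^j_{T\lQGr}(\mathscr{S}_\nu^\vee,\XX)=0$, and since this filtration is good (Proposition~\ref{spectral1}), Lemma~\ref{gr-comment} gives $\SSExt^j_{T\lQGr}(\mathscr{S}_\nu^\vee,\XX)=0$. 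This is exactly the ``spectral sequence machinery in degree zero'' you allude to in your final paragraph---but applied to Part~(1), not Part~(2).
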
 

\begin{proof}   (1)   Fix $j>0$. By
   Corollary~\ref{easier2},  $\gr_F \mathscr{S}^\vee_{\nu}    = \mathcal{O}_X(\nu)$  
     is a vector bundle on $X$ and so certainly  $\SExt^ j_{X}(\gr_F \mathscr{S}^\vee_{\nu}     ,\,\mathcal{O}_X)=0 $.
 Thus,   by Corollary~\ref{subquotient}(2),   $\gr_F \SSExt^{\,^j}_{T\lQGr}(\mathscr{S}_{\nu}^{\vee}, \mathcal{X}) = 0$, 
 where $F$ is the good  filtration defined by Proposition~\ref{spectral1}.
 Therefore  $ \SSExt^{\,^j}_{T\lQGr}(\mathscr{S}_{\nu}^{\vee}, \mathcal{X}) = 0$
  by  Lemma~\ref{gr-comment}.

(2) By Lemma~\ref{S-vee}(2),  
 $$(\mathscr{S}_{\nu}^{\vee})^{\ve} \cong \bigoplus_{\gamma\geq \lambda_S} \Hom_{T_{\phi,\phi}}
 ({S}_{\nu,0}\otimes_{S_{0,0}}{T}_{\phi,0},\, {S}_{\gamma,0}\otimes_{S_{0,0}}{T}_{\phi,0}) $$  
 for any  $\phi\geq \lambda_T = \lambda_S$. Thus for $\gamma\geq\nu  $,
  left multiplication by $S_{\gamma,\nu} $  induces a homomorphism
 \begin{equation} \label{doubledual}
  (\mathscr{S}_{\nu})_{\gamma} = S_{\gamma,\nu} \longrightarrow \Hom_{T_{\phi,\phi}}
 ({S}_{\nu,0}\otimes_{S_{0,0}}{T}_{\phi,0},\,  {S}_{\gamma,0}\otimes_{S_{0,0}}{T}_{\phi,0}) =
  (\mathscr{S}_{\nu}^{\vee})_{\gamma}^{\ve}.  \end{equation}
  This produces a homomorphism 
   $\Theta : \mathscr{S}_{\nu} \longrightarrow  \pi_S((\mathscr{S}_\nu^\vee)^\ve)= \mathscr{S}_{\nu}^{\vee \vee}$ in $S\lQGr$. 
In order to prove that $\Theta$ is an isomorphism, it suffices to prove the same for \eqref{doubledual}.

Set  $Z={S}_{\nu,0}\otimes_{S_{0,0}}{T}_{\phi,0} $.
Then Definition~\ref{good-defn}(3) implies that  \eqref{doubledual} is the multiplication  map 
 \begin{equation} \label{doubledual1}
 S_{\gamma,\nu} \longrightarrow \Hom_{T_{\phi,\phi}}
 (Z, {S}_{\gamma,\nu}\otimes_{S_{\nu,\nu}}Z).  \end{equation}
As $S_{\gamma,\nu}$ is a projective right $S_{\nu,\nu}$-module, we can write
 $S_{\gamma,\nu}\oplus P\cong S_{\nu,\nu}^{(m)}$  and so,
 in order  to prove that  \eqref{doubledual1} is bijective 
it suffices to prove that the multiplication map  $ S_{\nu,\nu}^{(m)}\to \Hom_{T_{\phi,\phi}}
 (Z, {S}_{\nu,\nu}^{(m)}\otimes_{S_{\nu,\nu}}Z)  $ is bijective. Equivalently, it  suffices to 
 prove that the action map 
$
  \theta:   S_{\nu,\nu}\to \Hom_{T_{\phi,\phi}}
 (Z, Z) $
     is bijective. Since multiplication by  $S_{\nu,\nu}$ preserves the tensor product filtration on 
     $Z=S_{\nu,0}\otimes_{S_{0,0}} {T}_{\phi, 0}$,  the  map $\theta$  is filtered and hence induces an 
     associated      graded morphism
      \begin{equation} \label{action2} \chi:
      \gr S_{\nu,\nu}  \ \buildrel{\gr\theta}\over\longrightarrow  \ \gr \Hom_{T_{\phi,\phi}}(Z,Z)\\
       \    \hookrightarrow  \
      \Hom_{\gr T_{\phi,\phi}}(\gr (Z), 
      \gr(Z)),
   \end{equation} 
      where the final inclusion follows from the observations in  Subsection~\ref{goodhom}.
   Here $\chi$ 
       is again the natural multiplication  map. 
       
       Finally, Hypothesis (H3) implies that 
$\gr (Z) = R_{\gamma+\phi}$ and so, by 
Lemma~\ref{EndRv},  $\chi$ is an  isomorphism for $\gamma\gg 0$.
 It follows that $\theta$  and hence  \eqref{doubledual}  are also 
 isomorphisms, as required.
 \end{proof} 
 
\subsection{}
We are now ready to state and prove the second spectral sequence; in essence it  shows that  the Auslander-Gorenstein condition and   related double Ext spectral sequence,  
that are so useful for unitary algebras, have a natural analogue for directed algebras.
We remark  that, as in  \cite[Section~I.1]{Bj2}, we  prove the result for rings of finite injective dimension. 
However, it will often be more convenient to mimic  the argument of \cite[Chapter~2, Theorem~4.15]{Bj1} which 
unnecessarily assumes finite global dimension.  

For this result, define 
$$\EE^{p,q} (\MM) \ = \  \SSExt^{\,^p}_{T\lQGr}\big(\SSExt^{\,q}_{S\lQGr}(\MM,\,\XX),\,\XX\big), \qquad
\text{for all $p,q\geq 0$.}$$

 \label{Bj4.15-NC} 
\begin{theorem}  Let $\MM\in S\lqgr $
and set $d=\dim X.$  Then
\begin{enumerate}
\item  For all $v<j$,  and all subobjects $\mathcal{N}\subseteq \SSExt^{\,j}_{S\lQGr}(\MM,\XX)$ one has  
$ \SSExt^{\,^v}_{T\lQGr}(\mathcal{N},\XX)=0$. 

\item If $\mathcal{P}\in U\lQGr$, for $U=S$ or $T$,  then $\SSExt^{\,j}_{U\lQGr}(\mathcal{P},\XX)=0$ for $j>\dim X$.
\item There is a spectral sequence  $${}^{I}\hskip-2.5pt{E}^{p,q}_2 \ = \ 
\EE^{p,q} (\MM)   \ \  \Longrightarrow \ \   
 \mathbb{H}^{p-q}(\MM),$$ 
where $ \mathbb{H}^{p-q}(\MM) = \MM$ if $p=q$ and $ \mathbb{H}^{p-q}(\MM) = 0$ otherwise.
\item There exists a filtration
$0=\RR_{-1}(\MM)\subseteq  \RR_0(\MM)\subseteq 
 \RR_1(\MM)\subseteq \cdots \subseteq \RR_d(\MM) = \MM$
whose sections $\MM_v = \RR_v(\MM)/\RR_{v-1}(\MM)$ appear in exact sequences
$$0\to \MM_v\to  \EE^{d-v,d-v} (\MM) 
\to \mathcal{W}_v\to 0.$$
The cokernel $\mathcal{W}_v$ is isomorphic to a subfactor of the direct sum of double Ext groups
of the form $\EE^{a,b} (\MM)$ for $(a,b)=(d-v+j+2,\, d-v+j+1)$ and $0\leq j\leq v-2.$ 
\end{enumerate}
\end{theorem}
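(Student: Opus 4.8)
The strategy is to adapt the classical argument for the double Ext spectral sequence over an Auslander--Gorenstein ring, as in \cite[Chapter~2, Theorem~4.15]{Bj1} and \cite[Section~I.1]{Bj2}, replacing the ring $R$ and the functor $\Ext^{\bullet}_R(-,R)$ by the pair $(S,T)$ and the functors $\SSExt^{\bullet}_{S\lQGr}(-,\XX)$ and $\SSExt^{\bullet}_{T\lQGr}(-,\XX)$. The key input that makes this substitution legitimate is Theorem~\ref{S-vee2}, which says that $\mathscr{S}_\nu^\vee$ is acyclic with $\mathscr{S}_\nu^{\vee\vee}\cong\mathscr{S}_\nu$; combined with Corollary~\ref{resolutions2} and the existence of filtered projective resolutions by objects $\mathscr{S}_\nu$ (Lemma~\ref{resolutions}), this lets one compute $\SSExt$ via complexes of the acyclic sheaves $\mathscr{S}_\nu^\vee$, exactly as one computes $\Ext^\bullet_R(-,R)$ via complexes of free modules in the classical setting.

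First I would establish (1), the Auslander-type condition: given $\MM\in S\lqgr$ and a subobject $\mathcal{N}\subseteq\SSExt^j_{S\lQGr}(\MM,\XX)$, one must show $\SSExt^v_{T\lQGr}(\mathcal N,\XX)=0$ for $v<j$. The proof mimics \cite[Chapter~2]{Bj1}: pick a filtered projective resolution $\mathcal{P}^\bullet\to\MM\to 0$ by sums of $\mathscr{S}_\nu$'s, apply $(-)^\vee=\SSExt^0_{S\lQGr}(-,\XX)$ to get a complex $\KK^\bullet$ of acyclic sheaves in $T\lQGr$ whose cohomology is $\SSExt^\bullet_{S\lQGr}(\MM,\XX)$ (by Theorem~\ref{S-vee2}(1) and Corollary~\ref{resolutions2}(2)), then analyze the naturally associated double complex / the exact couple it produces. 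The vanishing statement follows from a degree-counting argument in the first-quadrant spectral sequence whose $E_2$ page is $\EE^{p,q}(\MM)$ together with the observation that $\mathcal{N}$ embeds into $\SSExt^j$, so any nonzero $\SSExt^v(\mathcal N,\XX)$ with $v<j$ would force a nonzero entry incompatible with convergence to $\MM$ concentrated on the diagonal. Statement (2) is the analogue of finite injective dimension: it follows from Corollary~\ref{subquotient}, since $\gr\SSExt^j_{U\lQGr}(\mathcal P,\XX)$ is a subquotient of $\SExt^j_X(\gr_F\mathcal P,\mathcal O_X)$, and this vanishes for $j>\dim X$ because $X$ is Gorenstein (by (H4) and the discussion before Lemma~\ref{3.3.3}); then Lemma~\ref{gr-comment} upgrades the vanishing of the associated graded to vanishing of the $\SSExt$ itself.

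For (3), the double Ext spectral sequence, I would run the hyper-Ext machinery: take the filtered projective resolution $\mathcal P^\bullet\to\MM$, form the bicomplex obtained by applying $(-)^\vee$ twice — i.e. first $\KK^\bullet=(\mathcal P^\bullet)^\vee$ in $T\lQGr$, then a projective resolution of $\KK^\bullet$ applied with $(-)^\vee$ back into $S\lQGr$ — and compute the total cohomology by the two standard spectral sequences. One of them degenerates using $\mathscr{S}_\nu^{\vee\vee}\cong\mathscr{S}_\nu$ (Theorem~\ref{S-vee2}(2)), giving $\mathbb{H}^{p-q}(\MM)=\MM$ for $p=q$ and $0$ otherwise; the other yields $E_2^{p,q}=\EE^{p,q}(\MM)$. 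Finally, (4) is a purely formal consequence of (1), (2), (3): the vanishing in (1) forces the spectral sequence to have entries only in a restricted range, so that the filtration $\RR_\bullet(\MM)$ induced by the convergent spectral sequence on $\MM$ has the stated length $d=\dim X$ and the associated graded pieces $\MM_v$ sit in the displayed short exact sequences, with $\mathcal W_v$ built from the off-diagonal $E_\infty$ terms, which by the structure of the spectral sequence are subfactors of the $\EE^{a,b}(\MM)$ with $(a,b)=(d-v+j+2,d-v+j+1)$; this is the verbatim analogue of \cite[Chapter~2, Theorem~4.15]{Bj1}.

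The main obstacle, I expect, is \emph{not} the formal spectral-sequence bookkeeping but rather making precise the hyper-cohomology construction in the category $S\lQGr$: one needs that $\SSExt^\bullet_{S\lQGr}(-,\XX)$ genuinely behaves like a derived functor computable by $\mathscr{S}_\nu$-resolutions on \emph{both} sides, including a Cartan--Eilenberg-type resolution of the complex $\KK^\bullet$, and that the acyclicity of $\mathscr{S}_\nu^\vee$ (Theorem~\ref{S-vee2}(1)) is exactly the hypothesis needed to collapse the relevant spectral sequence of the bicomplex. Care is also required because $\XX\notin S\otimes T\lqgr$ is only a direct limit of noetherian pieces, so one must check throughout — as in the proof of Lemma~\ref{agreement} — that the relevant $\SSExt$ groups are computed correctly on the noetherian subcategory and commute with the direct limits involved. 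Once these compatibilities are in place, steps (1)--(4) follow the template of \cite{Bj1,Bj2} essentially line by line.
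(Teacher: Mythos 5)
There is a genuine gap in your argument for part (1). You propose to deduce the Auslander-type condition from ``degree counting'' in the spectral sequence of part (3): that cannot work, for two reasons. First, convergence of ${}^{I}E^{p,q}_2=\EE^{p,q}(\MM)\Rightarrow\mathbb{H}^{p-q}(\MM)$ with $\mathbb{H}^{n}=0$ for $n\neq 0$ only forces the below-diagonal entries to be killed by differentials by the $E_\infty$ page; it does not make them vanish at $E_2$, and indeed in the classical unital setting the Auslander condition is an extra hypothesis on the ring, never a formal consequence of the double Ext spectral sequence. Second, and more fundamentally, part (1) concerns an \emph{arbitrary} subobject $\NN\subseteq\SSExt^{\,j}_{S\lQGr}(\MM,\XX)$, and the groups $\SSExt^{\,v}_{T\lQGr}(\NN,\XX)$ simply do not occur in the spectral sequence attached to $\MM$. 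The paper proves (1) by a completely different mechanism, namely the commutative theory through associated graded modules: Corollary~\ref{3.3.4} (which rests on the spectral sequence of Proposition~\ref{spectral1} and the Gorenstein property of $X$ from (H4)) gives $\chdim\bigl(\SSExt^{\,j}_{S\lQGr}(\MM,\XX)\bigr)\leq d-j$, hence $\chdim\NN\leq d-j$; and Corollary~\ref{subquotient} combined with Bass's local vanishing over the Gorenstein local rings $R_{(\mathfrak p)}$ (Lemma~\ref{localise}, Lemma~\ref{3.3.3}, \cite{Ba}) gives $\SSExt^{\,v}_{T\lQGr}(\NN,\XX)=0$ whenever $v<d-\chdim\NN$. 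These two estimates, \eqref{Bj7.3} and \eqref{Bj7.4} in the paper, yield (1) at once; they are precisely where the filtration hypotheses (H2)--(H4) enter, and no purely formal spectral-sequence bookkeeping can substitute for them.

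The rest of your plan is essentially the paper's route. Part (2) is argued exactly as you say (Corollary~\ref{subquotient}, $\injdim R=d$, and Lemma~\ref{gr-comment}). For (3) and (4), your Cartan--Eilenberg-style hypercohomology of the dual complex $(\mathcal P^\bullet)^\vee$ is in substance what the paper does; the paper sidesteps the worry you raise about building such resolutions inside $T\lqgr$ by transferring the complex to $T_{\gamma,\gamma}\lmod$ for $\gamma\gg 0$ via the Morita equivalence of Proposition~\ref{corner-ring}, constructing Bj\"ork's double complex there, and then dualising back, with Theorem~\ref{S-vee2} (acyclicity of $\mathscr{S}_\nu^\vee$ and $\mathscr{S}_\nu^{\vee\vee}\cong\mathscr{S}_\nu$) collapsing one of the two spectral sequences. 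Note also that (3) and (4) require only the finite-injective-dimension statement (2), not (1), so correcting (1) as above introduces no circularity; but as written your proof of (1) does not stand.
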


\begin{proof}   We need two preliminary observations. Let $\mathcal{P}\in S\lqgr .$
 First,   by Corollary~\ref{3.3.4}, and in the notation of Definition~\ref{char-defn},
\begin{equation}\label{Bj7.3}
\chdim\big(\SSExt^{\,^j}_{S\lQGr}(\mathcal{P}, \XX)\big) 
\leq d-j \qquad\text{for all}\ 0\leq j\leq d.
\end{equation}
Next, Lemma~\ref{localise}  and \cite[Proposition~2.9]{Ba}  implies that, for any $\mathfrak{p}\in \Proj(R)$,
$$ \SExt^v_{X}(\gr_F\mathcal{P}, \mathcal{O}_X)_\mathfrak{p}  \ = \ 
 \SExt^v_{\mathcal{O}_{X,\mathfrak{p}}} ((\gr_F\mathcal{P})_{\mathfrak{p}}, \mathcal{O}_{X,\mathfrak{p}}) 
\ = \ 0 \qquad \text{if} \ v<d-\chdim(\mathcal{P}).$$ 
This implies that  
 $\SExt^v_X(\gr_F\mathcal{P}, \mathcal{O}_X) = 0$ for $v< d-\chdim(\mathcal{P})$.
  Thus, by Corollary~\ref{subquotient} and Lemma~\ref{gr-comment},  
\begin{equation}\label{Bj7.4}
\SSExt^{\,^v}_{S\lQGr}(\mathcal{P},\,\XX) = 0 \qquad\text{if}\   v < d-\chdim(\mathcal{P}).
\end{equation}
By symmetry, these two equations also hold for $\mathcal{P}\in T\lqgr.$  
 We now turn to the proof of the theorem.
 
   (1)    Applying   \eqref{Bj7.3} to  $\mathcal{P}=\MM$
   shows that $\alpha=\chdim \NN \leq \chdim  \SSExt^{\,^j}_{S\lQGr}(\MM,\,\XX)\leq  d-j$. Thus if 
   $\SSExt^{\,^v}_{S\lQGr}(\NN,\,\XX) \not=0$ then the analogue of \eqref{Bj7.4} for $\NN\in T\lqgr$
   shows that $v\geq d-\alpha\geq j$, as required.

  (2) Since $\injdim R=d$  by (H4), this is immediate from Corollary~\ref{subquotient}.

 (3,4)      By  Lemma~\ref{resolutions} and  the comments in Subsection~\ref{sheaf-filt-defn} 
 we may form a finitely generated projective resolution
  $\cdots \to \mathcal{P}_d\to\cdots\to \mathcal{P}_0 \to \MM\to 0$ in $S\lqgr$. Now  form the complex
  $$\mathbf{P}^\vee_{\bullet} : \qquad 0\to \mathcal{P}_0^\vee\to \mathcal{P}_1^\vee\to\cdots\to \mathcal{P}_{d}^\vee\to \cdots,\qquad
  \text{where} \quad\mathcal{P}_r^\vee=\SSHom_{S\lQGr}(\mathcal{P}_r,\,\XX).$$ By construction, there 
  exists some fixed $\nu\gg0$ such that   the $\mathcal{P}_r$ are direct sums of shifts of 
   $\mathscr{S}_{\nu}$ and so, by Theorem~\ref{S-vee2}(1),
  each  $\mathcal{P}_r$ is an acyclic sheaf in $T\lQGr$. Moreover, by   Lemma~\ref{S-vee}(1), 
  each  $\mathcal{P}_r^\vee\in T\lqgr$.
  We  claim that we can form a double complex
{\small{  \begin{equation}\label{complex}
  \begin{CD}   
   \vdots&&&\vdots&&&&&&&\vdots  \\
  @VVV & @VVV &  @.    && @VVV \\
  \mathcal{Q}_{10} &@>>>  \mathcal{Q}_{11} &@>>> & \cdots & @>>> \mathcal{Q}_{1d}  @>>>\cdots \\
    @VVV & @VVV & @. && @VVV \\
    \mathcal{Q}_{00} &@>>>  \mathcal{Q}_{01} &@>>> & \cdots & @>>> \mathcal{Q}_{0d}@>>>\cdots \\
    @VVV & @VVV & @. && @VVV \\
    \mathcal{P}^\vee_{0} &@>>>  \mathcal{P}^\vee_{1} &@>>> & \cdots & @>>> \mathcal{P}^\vee_{d}@>>>\cdots \\
    @VVV & @VVV & @. && @VVV  \\
    0&&&0&&&&&&&0
    \end{CD}
    \end{equation} }}
    in $T\lqgr$  satisfying the following conditions:
    \begin{itemize}
\item    Each $\mathcal{Q}_{ij}$ is a projective object in $T\lqgr$ and each column is exact;
    \item the cohomology groups  $\{\mathcal{H}_{vj}\}$ of the  row  complexes 
    $0\to \mathcal{Q}_{v0}\to \mathcal{Q}_{v1}\to\cdots\to \mathcal{Q}_{vd}\to \cdots$ are 
    projective;
    \item for each $j$,  the complex 
    $\cdots  \mathcal{H}_{2j}\to \mathcal{H}_{1j}\to \mathcal{H}_{0j}\to \SSExt^{\,^j}_{S\lQGr}(\MM,\,\XX)$
    is a projective resolution in $T\lqgr$ of $\SSExt^{\,^j}_{S\lQGr}(\MM,\,\XX)$.
    \end{itemize}
     The proof of this assertion is almost immediate. As is proved in \cite[pp.58-9]{Bj1},
starting with any complex of  finitely generated modules  over a noetherian ring in place of $\mathbf{P}_{\bullet}$,     then 
 such a double complex exists. 
     So it exists in  $T_{\gamma, \gamma}\lmod$ for $\gamma\gg 0$. But 
       $T_{\gamma,\gamma}\lmod \simeq T\lqgr$    by Proposition~\ref{corner-ring} 
       applied to $T$.  So use this equivalence to translate  the given complex $\mathbf{P}_{\bullet}$ 
      in  $T_{\gamma,\gamma}\lmod$  to a complex $\mathbf{C}_{\bullet}$ in $T  \lqgr$ and apply the above construction.

    Now  consider the double complex    
     $\bigl\{\mathcal{Q}^\vee_{jv} =\SSHom_{T\lQGr}(\mathcal{Q}_{jv},\,\XX)\bigr\}$
     of modules in $S\lQGr$.  We remark that the rest of the proof closely follows that of Bjork (\cite[pp.60--62]{Bj1}
     or \cite[pp.62--64]{Bj2}). We will cite both books since, although the former assumes finite global dimension,
      the arguments given there are closer to the ones we need.

 \begin{sublemma}\label{Bj4.15-sublemma}
  The hypercohomology groups in the double complex   
  $\bigl\{\mathcal{Q}^\vee_{jv} \bigr\}$ vanish everywhere except 
on the $d^{\mathrm{th}}$ diagonal, where the cohomology group is 
$\mathcal{H}^d \cong \MM$ as  objects in $S\lQGr$. 
\end{sublemma}
        
 \begin{proof} Consider 
 \eqref{complex}. By   Theorem~\ref{S-vee2} and  construction,  the $\mathcal{P}_r^\vee$ and 
  $\mathcal{Q}_{uv}$ are acyclic sheaves, while   the columns of \eqref{complex}  are exact.  Thus   the dual complex 
$0\to \mathcal{P}_{j}^{\vee\vee} \to \mathcal{Q}_{0j}^\vee\to \mathcal{Q}_{1j}^\vee\to \cdots$ will be exact in 
$S\lQGr$.  Moreover, by Theorem~\ref{S-vee2}(2), $\mathcal{P}_j^{\vee\vee}\cong \mathcal{P}_j$. 
 Up to a change of notation, the proof of  
 \cite[Proposition~1.2]{Bj2} may now be used without further change to 
 prove the sublemma.
 \end{proof}
 
 The remainder of proof of \cite[Chapter~2, Theorem~4.15]{Bj1}, from the   sublemma on page 60
 through to its conclusion on page 61,   may now be used to prove the
 rest of  parts~(2) and (3) of the present theorem;
  the only changes being to substitute  projective module by acyclic sheaf,
 $(-)^*$ by $(-)^\vee$, and hence $\Ext(-, A)$ by $\SSExt(-,\XX)$. 
The proof only uses the fact that $A$ has finite injective dimension rather than finite global dimension,  and the relevant analogue of that assertion is given by part (2) of this theorem.
     \end{proof}
    
 \subsection{}    By combining  Theorem~\ref{Bj4.15-NC}(3)  with \eqref{Bj7.3}
    we obtain
    
\begin{corollary}   \label{Bj7.4bis} 
   For $\MM\in S\lqgr$ and $0\leq v\leq d$, define   $\MM_v$,  $ \RR_v(\MM) $ and $\mathcal{W}_v$ 
    as in   Theorem~\ref{Bj4.15-NC}.  Then
    \begin{enumerate}
    \item  $\chdim(\MM_v)\leq v$.
    \item  $\chdim\bigl( \mathcal{W}_v\bigr)\leq d-(d-v+2)=v-2$.
    \item  $\chdim\bigl(\RR_v(\MM)\bigr)\leq v$ for all $0\leq v\leq d.$
   \qed
 \end{enumerate}
 \end{corollary}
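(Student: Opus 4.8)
The plan is to deduce all three bounds formally from the filtration and exact sequences supplied by Theorem~\ref{Bj4.15-NC}(4), by feeding the double-Ext groups occurring there into the dimension estimate \eqref{Bj7.3}. I would first record two routine properties of $\chdim$ for objects of $S\lqgr$, and symmetrically of $T\lqgr$: (a) if $\mathcal N$ is a subobject or a quotient of $\MM$, given the induced filtration, then $\gr_F\mathcal N$ is a subsheaf, respectively a quotient sheaf, of $\gr_F\MM$, so $\chdim\mathcal N\leq\chdim\MM$; and (b) for a short exact sequence $0\to\mathcal A\to\mathcal B\to\mathcal C\to 0$, compatible good filtrations give exactness on associated graded sheaves, whence $\chdim\mathcal B=\max\{\chdim\mathcal A,\chdim\mathcal C\}$. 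Both are well posed because $\Char$ is independent of the chosen good filtration (Definition~\ref{char-defn}); note also that each $\SSExt^{\,^j}_{S\lQGr}(\MM,\XX)$ lies in $T\lqgr$ by Corollary~\ref{subquotient}. I would also use that \eqref{Bj7.3} is symmetric in $S$ and $T$, as noted in the proof of Theorem~\ref{Bj4.15-NC}: $\chdim\SSExt^{\,^j}_{T\lQGr}(\mathcal P,\XX)\leq d-j$ for $\mathcal P\in T\lqgr$ and $0\leq j\leq d$.

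For (1): Theorem~\ref{Bj4.15-NC}(4) embeds $\MM_v$ in $\EE^{d-v,d-v}(\MM)=\SSExt^{\,^{d-v}}_{T\lQGr}\bigl(\SSExt^{\,^{d-v}}_{S\lQGr}(\MM,\XX),\,\XX\bigr)$. Taking $\mathcal P=\SSExt^{\,^{d-v}}_{S\lQGr}(\MM,\XX)\in T\lqgr$ and applying the $T$-form of \eqref{Bj7.3} with $j=d-v$ gives $\chdim\EE^{d-v,d-v}(\MM)\leq d-(d-v)=v$, and then (a) gives $\chdim\MM_v\leq v$. For (2): $\mathcal W_v$ is a subfactor of $\bigoplus_{0\leq j\leq v-2}\EE^{\,d-v+j+2,\,d-v+j+1}(\MM)$; writing each summand as $\SSExt^{\,^{d-v+j+2}}_{T\lQGr}\bigl(\SSExt^{\,^{d-v+j+1}}_{S\lQGr}(\MM,\XX),\,\XX\bigr)$ and applying \eqref{Bj7.3} over $T$, its characteristic dimension is at most $d-(d-v+j+2)=v-j-2\leq v-2$; the characteristic dimension of a finite direct sum is the maximum of the summands and a subfactor cannot increase it, so $\chdim\mathcal W_v\leq v-2$ (trivially when $v\leq1$, where the sum is empty). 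For (3): induct on $v$; $\RR_{-1}(\MM)=0$ is the base, and the step follows from the exact sequence $0\to\RR_{v-1}(\MM)\to\RR_v(\MM)\to\MM_v\to0$ of Theorem~\ref{Bj4.15-NC}(4), property (b), and part (1), giving $\chdim\RR_v(\MM)\leq\max\{v-1,v\}=v$.

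There is no genuine obstacle: the statement is a formal corollary of the structure in Theorem~\ref{Bj4.15-NC}(4). The only thing to watch is the cohomological bookkeeping — checking that each index at which \eqref{Bj7.3} is applied lies in $[0,d]$ — which is immediate from $0\leq v\leq d$ together with, in part (2), $0\leq j\leq v-2$.
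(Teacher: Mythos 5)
Your proposal is correct and is essentially the paper's own argument: the paper simply states that the corollary follows ``by combining Theorem~\ref{Bj4.15-NC} with \eqref{Bj7.3}'' and leaves the details to the reader, and what you have written out -- feeding the groups $\EE^{p,q}(\MM)$ from the filtration and exact sequences of Theorem~\ref{Bj4.15-NC}(4) into the $T$-side version of \eqref{Bj7.3}, together with the standard behaviour of $\chdim$ under subquotients, finite direct sums and extensions -- is exactly that combination (note the paper's reference to part~(3) there is really to the filtration statement, your part~(4)).
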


 
 \clearpage
   \section{Equidimensionality.}\label{section-main} 
   \bigskip

   \begin{boxcode} Fix  lower $\Lambdaplus$-directed algebras $S$ and $T$ with $T_{00}^{\opp} = S_{00}$, 
 and a finitely generated commutative $\Lambdaplus$-graded algebra
  $R = \bigoplus_{\lambda\in\Lambdaplus} R_\lambda$.
 We assume that these satisfy hypotheses (H1--H4) from \ref{filthyp} and \ref{ample}. 
 Write $X = \Proj (R)$, and set $\dim X=d$.
 \end{boxcode}
 
\bigskip

  \subsection{} \label{s-homog-defn}  We are now ready to prove the main result of the paper:   
   the  characteristic variety $\Char\MM$ of a simple object $\MM\in S\lqgr$ is equidimensional.
The proof  also works for  the following more general modules.
 
    An object $\MM\in S\lqgr$ will be   called {\it $s$-homogeneous}\index{$s$-homogeneous objects in $S\lqgr$}
   if each nonzero subobject $\NN\subseteq \MM$ satisfies $\chdim(\NN)=s$, in the sense of Definition~\ref{char-defn}.  
   It is not clear how to characterise 
   $s$-homogeneous objects, but at least if $\MM$ is  simple then 
   $\MM$ is   $s$-homogeneous for some~$s$. 
  Here is another example, the proof of which is left to the reader:   if $P$ is a prime ideal of $S_{\lambda,\lambda}$ for a good parameter $\lambda$,   then the image 
   $\Psi(S_{\lambda,\lambda}/P)\in S\lqgr$ is  $s$-homogeneous  for some~$s$.

\subsection{Definition}   \label{j-defn}
If  $\MM\in S\lqgr$,
define the    \emph{grade} $j(\MM)$ of $\MM$ 
   to be \ $$j(\MM)=\min\{j : \SSExt^{\,^j}_{S\lQGr}(\MM,\, \XX)\not=0\}.$$ Recall the notation $\EE^{p,q} (\MM)=
     \SSExt^{\,^p}_{T\lQGr}\big(\SSExt^{\,^q}_{S\lQGr} (\MM,\,\XX),\,\XX\big) $
   from \ref{Bj4.15-NC}.

\begin{proposition}\label{Bj7.1} Let  $\MM\in S\lqgr$.  
\begin{enumerate}
\item 
$j(\MM)+\chdim(\MM)= d.$
   
   \item  Let $p\in \mathbb{N}$. Then
   $\EE^{p,p}(\MM)$ is either zero or $(d-p)$-homogeneous.
   
\item
 If $\chdim(\MM)\leq d-v$ for some  $v\in \mathbb N$, then $\chdim\bigl(\EE^{j,v}(\MM)\bigr)\leq d-j-2$ for all $j>v$.
   \end{enumerate}
   \end{proposition}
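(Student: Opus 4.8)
The plan is to transport to the quotient category $S\lQGr$ the arguments of Björk \cite[Ch.~2, \S7]{Bj1} (compare Levasseur \cite{Le}), with $\XX$ playing the r\^ole of the base ring and with the Auslander condition and the double Ext spectral sequence supplied by Theorem~\ref{Bj4.15-NC} standing in for the Auslander--Gorenstein hypotheses. I shall use throughout that Hypotheses (H1--H4) are symmetric in $S$ and $T$, so that every assertion already proved for $S$ holds with $S$ and $T$ interchanged; in particular \eqref{Bj7.3} and \eqref{Bj7.4} hold for objects of $T\lqgr$. For part~(1), the inequality $j(\MM)\geq d-\chdim(\MM)$ is just \eqref{Bj7.4}, so it is enough to show that $\SSExt^{\,^t}_{S\lQGr}(\MM,\XX)\neq 0$ for $t=d-\chdim(\MM)$. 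Choose an irreducible component $\VV(\mathfrak p)$ of $\Char\MM$ of maximal dimension, so that $\codim_X\VV(\mathfrak p)=t$ and $\mathfrak p$ is a minimal prime of $\Supp(\gr_F{\sf M})$. By Proposition~\ref{3.3.4-extra}, $\bigl(\gr_F\SSExt^{\,^t}_{S\lQGr}(\MM,\XX)\bigr)_{\mathfrak p}\cong\SExt^{\,t}_{X}(\gr_F\MM,\OO_X)_{\mathfrak p}$, and by Lemmas~\ref{localise} and~\ref{3.3.3}(2) the right-hand side is a module of length $\ell_{R_{(\mathfrak p)}}\bigl((\gr_F{\sf M})_{(\mathfrak p)}\bigr)>0$. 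Hence $\gr_F\SSExt^{\,^t}_{S\lQGr}(\MM,\XX)\neq 0$, and Lemma~\ref{gr-comment} gives the claim.

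I would prove part~(3) next, directly from the spectral sequence $\EE^{a,b}(\MM)\Rightarrow\mathbb{H}^{a-b}(\MM)$ of Theorem~\ref{Bj4.15-NC}(3). Since $\chdim(\MM)\leq d-v$, \eqref{Bj7.4} gives $\SSExt^{\,^b}_{S\lQGr}(\MM,\XX)=0$, and hence $\EE^{a,b}(\MM)=0$, for every $b<v$. Now fix $j>v$. The term $\EE^{j,v}(\MM)$ sits in total degree $j-v>0$, so $E_\infty^{j,v}=0$; and for $r\geq 2$ the incoming differential $d_r\colon E_r^{j-r,\,v-r+1}\to E_r^{j,v}$ vanishes, its source being a subquotient of $\EE^{j-r,\,v-r+1}(\MM)=0$. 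Therefore $\EE^{j,v}(\MM)=E_2^{j,v}$ acquires a finite filtration whose successive quotients $E_r^{j,v}/E_{r+1}^{j,v}\cong\im d_r$ (for $r\geq 2$) are subquotients of $\EE^{j+r,\,v+r-1}(\MM)$, and by \eqref{Bj7.3} applied to the $T$-module $\SSExt^{\,^{v+r-1}}_{S\lQGr}(\MM,\XX)$ each of these has $\chdim\leq d-(j+r)\leq d-j-2$. Hence $\chdim\EE^{j,v}(\MM)\leq d-j-2$.

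For part~(2), write $\mathcal A=\SSExt^{\,^p}_{S\lQGr}(\MM,\XX)\in T\lqgr$ and $B=\EE^{p,p}(\MM)=\SSExt^{\,^p}_{T\lQGr}(\mathcal A,\XX)$; if $B=0$ there is nothing to prove, so assume $B\neq 0$, which by \eqref{Bj7.3} and \eqref{Bj7.4} forces $j(\mathcal A)=p$. Applying the $S$--$T$ interchange of part~(3) to $\mathcal A$ at the fixed index $p=j(\mathcal A)$ yields $\chdim\SSExt^{\,^b}_{S\lQGr}(B,\XX)\leq d-b-2$ for $b>p$, while $\chdim(B)\leq d-p$ and \eqref{Bj7.4} give $\SSExt^{\,^b}_{S\lQGr}(B,\XX)=0$ for $b<p$. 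Feeding these bounds into the double Ext spectral sequence of $B\in S\lqgr$ and using \eqref{Bj7.4} once more, one checks that the only nonzero term on the $E_2$-page in total degree $0$ is $\EE^{p,p}(B)$ and that it carries no nonzero differential, so the canonical edge map $B\to\EE^{p,p}(B)$ is an isomorphism. Now let $0\neq\NN\subseteq B$. Theorem~\ref{Bj4.15-NC}(1), with $S$ and $T$ interchanged, gives $j(\NN)\geq p$, hence $\chdim(\NN)\leq d-p$ by part~(1). If $\chdim(\NN)<d-p$, equivalently $j(\NN)>p$, then from $0\to\NN\to B\to Q\to 0$ together with $\SSExt^{\,^{p-1}}_{S\lQGr}(\NN,\XX)=\SSExt^{\,^p}_{S\lQGr}(\NN,\XX)=0$ one obtains $\SSExt^{\,^p}_{S\lQGr}(Q,\XX)\xrightarrow{\ \sim\ }\SSExt^{\,^p}_{S\lQGr}(B,\XX)$ and hence $\EE^{p,p}(B)\xrightarrow{\ \sim\ }\EE^{p,p}(Q)$; naturality of the edge maps, together with the isomorphism $B\cong\EE^{p,p}(B)$, then forces $B\twoheadrightarrow Q$ to be injective, so $\NN=0$---a contradiction. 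Thus every nonzero subobject of $B$ has $\chdim$ equal to $d-p$, i.e.\ $B$ is $(d-p)$-homogeneous.

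The step I expect to be the genuine obstacle is the purity assertion in part~(2). Over an honest Auslander--Gorenstein ring one simply invokes the $p$-purity of $\Ext^p$, but here that shortcut is unavailable, and the purity has to be extracted instead from the biduality $B\cong\EE^{p,p}(B)$ and the naturality of the spectral-sequence edge maps. Establishing that biduality is where one must pin down the exact vanishing and degeneration pattern of the double Ext spectral sequence for $B$ --- and the sharpened dimension bounds furnished by part~(3) are precisely what make that pattern collapse to a single term.
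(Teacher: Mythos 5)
Your parts (1) and (3) are correct. For (1) you take a genuinely different route from the paper: you re-derive the nonvanishing of $\SSExt^{\,^t}_{S\lQGr}(\MM,\XX)$ at a component of $\Char\MM$ of minimal codimension via Proposition~\ref{3.3.4-extra}, Lemma~\ref{localise} and Lemma~\ref{3.3.3}(2) (in effect re-proving the relevant half of Corollary~\ref{3.3.4}, which you could simply quote), whereas the paper deduces $\chdim(\MM)\leq d-j(\MM)$ from the vanishing of the sections $\MM_v$ in the filtration $\RR_\bullet(\MM)$ of Theorem~\ref{Bj4.15-NC}(4) together with Corollary~\ref{Bj7.4bis}. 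Both work. Your (3) is essentially the differential-chasing argument of \cite[Chapter~2, Lemma~7.11]{Bj1} that the paper cites, and your vanishing computations there and in the first half of (2) are sound; in particular the conclusion $B=\RR_{d-p}(B)=B_{d-p}\buildrel{\sim}\over{\longrightarrow}\EE^{p,p}(B)$ does follow from Theorem~\ref{Bj4.15-NC}(4) once all $\EE^{a,a}(B)$ with $a\neq p$ and the correction term $\mathcal{W}_{d-p}$ are killed.

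The gap is the last step of (2). Your contradiction needs a commutative square linking (i) the isomorphism $B\cong\EE^{p,p}(B)$ furnished by the spectral sequence of $B$, (ii) the corresponding map $Q\to\EE^{p,p}(Q)$ coming from the spectral sequence of $Q=B/\NN$ (whose existence already uses $Q=\RR_{d-p}(Q)$, which you do not note but which does follow from $\chdim Q\leq d-p$), and (iii) the isomorphism $\EE^{p,p}(B)\to\EE^{p,p}(Q)$ induced by $\SSExt$-functoriality; that is, it needs naturality in $\MM$ of the filtration $\RR_\bullet(\MM)$ and of the maps $\MM_v\to\EE^{d-v,d-v}(\MM)$. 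But Theorem~\ref{Bj4.15-NC} is stated and proved purely as an existence result, built from a chosen filtered projective resolution and a chosen double complex \eqref{complex}; no functoriality is recorded, and you assert the "naturality of the edge maps" rather than prove it. This is not cosmetic: without that square, the abstract isomorphisms $B\cong\EE^{p,p}(B)\cong\EE^{p,p}(Q)$ only say that $B$ is isomorphic to an extension of an object of characteristic dimension $\leq d-p-2$ by a quotient of $B/\NN$, and a noetherian object can perfectly well be isomorphic to such an extension with $\NN\neq 0$, so no contradiction follows by abstract nonsense (the surjective-endomorphism trick is unavailable because the isomorphism is not known to be compatible with the projection $B\to Q$). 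To close the gap you must either establish the functoriality (lift $B\to Q$ to maps of filtered projective resolutions and of the double complexes, check homotopy-independence so that the induced maps are well defined from the $E_2$-page on, and identify the induced map on $E_2^{p,p}$ with the $\SSExt$-induced one), or do what the paper does: transport Björk's purity argument, \cite[Chapter~2, Theorem~7.10]{Bj1} or \cite[Proposition~1.18 and Corollary~1.20]{Bj2}, mutatis mutandis with $\Ext(-,A)$ replaced by $\SSExt(-,\XX)$, using Theorem~\ref{Bj4.15-NC}(1) in place of the Auslander condition.
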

   
   \begin{proof}  (1) By \eqref{Bj7.4} we know that $j(\MM)\geq d-\chdim(\MM)$, 
   so we only need to prove the opposite inequality.
  Suppose that $v>d-j(\MM)$.  Then $d-v<j(\MM)$ and  $\SSExt^{\,^{d-v}}_{S\lQGr}(\MM,\,\XX)=0$.
  Therefore, $0=\EE^{d-v,d-v}(\MM)=\MM_v$. This  implies that 
    $\MM=\RR_{d-j(\MM)}(\MM)$   and so  Corollary~\ref{Bj7.4bis}  gives 
  $ \chdim(\MM)\leq d-j(\MM)$.

   (2)  This is a formal consequence of   the spectral sequence  Theorem~\ref{Bj4.15-NC}(2). 
   Modulo replacing $\Ext^i( -,\,A)$ 
   by $\SSExt^{\,^i}(-,\XX)$, the proof of \cite[Proposition~1.18 and Corollary~1.20]{Bj2},
    or indeed of  \cite[Chapter~2, Theorem~7.10]{Bj1}, 
   can be used mutatis mutandis.
   
   (3)   This result, which will not be needed in this paper,   is again proved by investigating the spectral sequence from 
   Theorem~\ref{Bj4.15-NC}(2). Modulo replacing $\Ext^i( -,\,A)$ 
   by $\SSExt^{\,^i}(-,\XX)$, the proof of \cite[Chapter~2, Lemma~7.11]{Bj1} 
   can be used verbatim.  
   \end{proof}

   \subsection{} At last, we reach our destination.
  The proof of this result follows   that of \cite[Th\'eor\`eme~ 3.3.2]{Le}.

   \begin{theorem}\label{main-thm} Suppose that 
    $\MM\in S\lqgr$ is $s$-homogeneous for some $s\in \mathbb{N}.$  Then each irreducible 
   component of  $\Char \MM$ has dimension $s$.
\end{theorem}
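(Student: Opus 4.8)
The plan is to transpose the proof of \cite[Th\'eor\`eme~3.3.2]{Le} to the present setting, combining the double Ext spectral sequence of Theorem~\ref{Bj4.15-NC} with the geometric bookkeeping of Corollary~\ref{3.3.4}. We may assume $\MM\neq 0$; taking $\NN=\MM$ in the definition of $s$-homogeneity gives $\chdim(\MM)=s$, so every irreducible component of $\Char\MM$ has dimension at most $s$ and it suffices to rule out a component of strictly smaller dimension. Suppose then, for contradiction, that $C=\VV(\mathfrak{p})$ is an irreducible component of $\Char\MM$ with $c:=\dim C<s$, and put $t=d-c$, the codimension of $C$ in $X$. By Proposition~\ref{Bj7.1}(1) we have $j(\MM)=d-s<t$.

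The key point is to propagate $C$ through two applications of Corollary~\ref{3.3.4}. Applied to $\MM$ with exponent $t$, that corollary writes $\Char\SSExt^{\,^t}_{S\lQGr}(\MM,\XX)$ as the union of the codimension-$t$ components of $\Char\MM$ together with an auxiliary set $\mathcal{D}$, each piece of which lies in a component of $\Char\MM$ of codimension $\neq t$. Since $C$ is itself a codimension-$t$ component of $\Char\MM$ it occurs on the first list, and being a component of $\Char\MM$ it is contained in no other component of $\Char\MM$; in particular $C\not\subseteq\mathcal{D}$. Hence $C$ is an irreducible component of $\Char\SSExt^{\,^t}_{S\lQGr}(\MM,\XX)$, so $\chdim\SSExt^{\,^t}_{S\lQGr}(\MM,\XX)\geq c$, and with \eqref{Bj7.3} this forces $\chdim\SSExt^{\,^t}_{S\lQGr}(\MM,\XX)=c=d-t$, with $C$ top-dimensional. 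Now set $\NN=\SSExt^{\,^t}_{S\lQGr}(\MM,\XX)\in T\lqgr$, endowed with its induced good filtration, and repeat the argument, using the symmetry of the hypotheses in $S$ and $T$: applying Corollary~\ref{3.3.4} for $T$ to $\NN$ with exponent $t$ and arguing exactly as before, $C$ is an irreducible component of $\Char\SSExt^{\,^t}_{T\lQGr}(\NN,\XX)=\Char\EE^{t,t}(\MM)$. In particular $\EE^{t,t}(\MM)\neq 0$, so by Proposition~\ref{Bj7.1}(2) it is $(d-t)$-homogeneous, that is $c$-homogeneous.

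It remains to feed this back into the filtration of Theorem~\ref{Bj4.15-NC}(4) at level $v=c$. This provides a short exact sequence $0\to\MM_c\to\EE^{d-c,d-c}(\MM)\to\mathcal{W}_c\to 0$ with $\MM_c=\RR_c(\MM)/\RR_{c-1}(\MM)$ and, by Corollary~\ref{Bj7.4bis}(2), $\chdim(\mathcal{W}_c)\leq c-2$. As $\EE^{d-c,d-c}(\MM)=\EE^{t,t}(\MM)$ is nonzero and $c$-homogeneous it has $\chdim$ equal to $c$, so it is not isomorphic to $\mathcal{W}_c$; therefore $\MM_c\neq 0$, and consequently $\RR_c(\MM)\neq 0$. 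But $\RR_c(\MM)$ is a subobject of $\MM$ with $\chdim(\RR_c(\MM))\leq c<s$ by Corollary~\ref{Bj7.4bis}(3), contradicting the $s$-homogeneity of $\MM$. Hence $c=s$, and as $C$ was an arbitrary component of $\Char\MM$, every irreducible component of $\Char\MM$ has dimension $s$. The main obstacle in making this rigorous is the bookkeeping in the two applications of Corollary~\ref{3.3.4}: one must check that at each stage $C$ escapes the auxiliary set $\mathcal{D}$ of lower-dimensional pieces and that after the first application $C$ survives as a top-dimensional component of $\Char\NN$, so that its codimension does not drop. Both facts follow from the precise description of $\mathcal{D}$ in Corollary~\ref{3.3.4} together with \eqref{Bj7.3}.
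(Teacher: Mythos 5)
Your proof is correct and follows essentially the same route as the paper: two applications of Corollary~\ref{3.3.4} to show that a putative low-dimensional component forces $\EE^{t,t}(\MM)\neq 0$, then Proposition~\ref{Bj7.1}(2), the filtration of Theorem~\ref{Bj4.15-NC} and Corollary~\ref{Bj7.4bis} to produce a nonzero subobject $\RR_{d-t}(\MM)$ of characteristic dimension $<s$, contradicting $s$-homogeneity. The only cosmetic difference is that the paper isolates the equivalence $\EE^{p,p}(\MM)=0\Leftrightarrow\MM_{d-p}=0$ as a preliminary step, whereas you argue the needed implication inline (and your appeal to Proposition~\ref{Bj7.1}(1) is not actually used).
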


\begin{proof}
Keep the notation from Theorem~\ref{Bj4.15-NC} and let 
  $p\in\mathbb{Z}$. If    $\EE^{p,p}(\MM)  =0$, 
then   $\MM_{d-p}=0$ by Theorem~\ref{Bj4.15-NC}(3). Conversely, if $\MM_{d-p}=0$ then 
$\EE^{p,p}(\MM)=W_{d-p}$ and so, by Corollary~\ref{Bj7.4bis}(2),
$$\chdim\bigl(\EE^{p,p}(\MM)\bigr) =\chdim\bigl( W_{d-p}\bigr)\leq d-p-2.$$  By 
Proposition~\ref{Bj7.1}(2), this implies that $\EE^{p,p}(\MM)=0$.

Suppose that there exists an irreducible component $\VV(\mathfrak{p})$ of $\Char \MM $ with 
$\dim \VV(\mathfrak{p})< s$.  Let $t=\text{codim}\bigl(\VV(\mathfrak{p})\bigr)$ and consider  
 $\EE=\SSExt^{\,^t}_{S\lQGr }(\MM,\,\XX)\in T\lqgr$.  
By  Corollary~\ref{3.3.4}, $\VV(\mathfrak{p})$ is  an
 irreducible component of $\Char \EE$.
 Applying  Corollary~\ref{3.3.4} again, but with $\MM$ replaced by  $\EE$,  
shows that $\VV(\mathfrak{p})$ is also  an irreducible component of 
$\SSExt^{\,^t}_{T\lQGr}(\EE,\,\XX) =\EE^{t,t}(\MM)$. 
In particular, $\EE^{t,t}(\MM)\not=0$.  
  By the first paragraph of this proof, it follows  that   $\MM_{d-t}\not=0$
  and hence that $\RR_{d-t}(\MM)\not=0$.
 By Corollary~\ref{Bj7.4bis},  $\chdim \RR_{d-t}(\MM )\leq d-t <s$. 
 This contradicts the fact that $\MM$ is $s$-homogeneous. Hence each component of $\Char \MM$ has dimension at least $s$.

Conversely since $\chdim (\MM) = s$ each component of $\Char \MM$ has dimension at most $s$. This completes the theorem.
\end{proof}


\section{Applications to quantizations of symplectic singularities}\label{symplectic-sect}

 \subsection{} \label{BPW-defns}
  We end by showing that   many important examples of $\Z$-algebras arising from geometric representation theory   satisfy Hypotheses (H1--H4). As an application, we answer a question from \cite{GS2} on rational Cherednik algebras.
  
\subsection{} We start by recalling results presented in \cite{BPW}.
Let $Y$ be an affine irreducible symplectic singularity with a $\C^*$-action and  $\pi: X\longrightarrow Y$ a 
$\C^*$-equivariant symplectic resolution of singularities. We assume first that $\C^*$ acts on $\C[Y]$ with non-negative weights and such that $\C[Y]^{\C^*} = \C$, and second that there exists $0<m \in \mathbb{N}$ such that the $\C^*$-action scales the given 
symplectic form $\omega$ by $t\cdot \omega = t^{m} \omega$ for all $t\in \C^*$.  
This is called a \emph{conical symplectic resolution.} As we saw in the introduction, there are many interesting examples of these varieties.

\subsection{}
Given $\lambda\in H^2(X,\C)$ there is a unique sheaf of $\C^*$-equivariant complete $\C[[h]]$-algebras $\mathcal{Q}_X^\lambda$ on $X$ with the following properties:  
\begin{enumerate}
\item $\mathcal{Q}_X^\lambda/h\mathcal{Q}_X^\lambda \cong \mathcal{O}_X$;
\item the Poisson bracket on $X$ induced by $\omega$ equals the Poisson  bracket defined 
by $\{ f_1, f_2 \} \equiv h^{-1} [ \hat{f}_1 , \hat{f}_2] \text{ (mod $h$)}$, where $\hat{f}_1, \hat{f}_2$ are lifts to $\mathcal{Q}_X^{\lambda}$ of $f_1, f_2 \in \mathcal{O}_X$;   
\item $t\cdot h = t^{-m} h$.
\end{enumerate}

Let $\mathcal{D}_X^{\lambda} = \mathcal{Q}_X^{\lambda}[h^{-1/m}]$, a sheaf of $\mathbb{C}((h^{1/m}))$-algebras on 
$X$. Write $\mathcal{D}_X^{\lambda}\md$ for  the full subcategory of $\C^*$-equivariant 
$\mathcal{D}_X^{\lambda}$-modules whose objects $\mathcal{M}$ admit a 
$\C^*$-equivariant $\mathcal{Q}_X^{\lambda}$-lattice $\mathcal{M}(0)$ such that 
$\mathcal{M}(0)/h\mathcal{M}(0)$ is a coherent $\mathcal{O}_X$-module, \cite[\S~4]{BPW}. 
Setting $U_{\lambda} = (\mathcal{D}_X^{\lambda}(X))^{\C^*}$, there is a functor 
$\Gamma^{\C^*} : \mathcal{D}_X^{\lambda}\md \longrightarrow U_{\lambda}\md$
which takes $\mathcal{M}$ to the $\C^*$-invariant global sections $\Gamma (X,\mathcal{M} )^{\C^*}$. This has a left adjoint 
${\sf Loc}: U_{\lambda}\md \longrightarrow \mathcal{D}_X^{\lambda}\md$ which sends $N$ to $\mathcal{D}_X^{\lambda}\otimes_{U_{\lambda}} N$. The pair $(X,\lambda)$ 
is called \emph{localizing} if $(\Gamma^{\C^*}, {\sf Loc})$ are mutually inverse equivalences.

\subsection{} \label{sec:BPWalgebra} There is an $\mathbb{N}$-algebra attached to $(X, \lambda)$ and the 
choice of a   very ample line bundle $\mathcal{L}$ on $X$. To construct it, let $\eta\in H^2(X,\C)$ be 
the first Chern class of $\mathcal{L}$. Then, for 
 any pair of integers $k,m$, \cite[Proposition~5.2]{BPW} shows that there is a unique sheaf of 
 $\mathbb{C}^*$-equivariant $(\mathcal{Q}_X^{\lambda + k\eta}, \mathcal{Q}_X^{\lambda+ m\eta})$-bimodules 
 ${}_{k\eta}\mathcal{B}_{m\eta}(\lambda)$ which quantizes the line bundle $\mathcal{L}^{k-m}$. Set
  $${}_{k\eta}S_{m\eta}(\lambda) = \Gamma^{\C^*}( {}_{k\eta}\mathcal{B}_{m\eta}(\lambda)[h^{-1/m}] )
   \in (U_{\lambda+k\eta}, U_{\lambda+m\eta})\text{-bimod}. $$
   Then    the desired  $\mathbb{N}$-algebra is
   $$S(\lambda, \eta) = \bigoplus_{k\geq m \geq 0} {}_{k\eta}S_{m\eta}(\lambda),$$   
 with multiplication induced from tensor products (see \cite[Definition~5.5]{BPW}).
 
\subsection{}
For each $p\in \mathbb{N}$,  the following new    $\mathbb{N}$-algebras can be constructed  from $S$: 
$$ S^{(p)}(\lambda, \eta) =   \bigoplus_{k\geq m \geq 0} {}_{pk\eta}S_{pm\eta}(\lambda)
\qquad\text{and}\qquad
S_{\geq r}(\lambda, \eta)  =  S(\lambda,\eta)_{\geq r} =  \bigoplus_{k\geq m \geq r} {}_{k\eta}S_{m\eta}(\lambda).$$
The former of these algebras is called the $p^{\text{th}}$ \emph{Veronese ring} of $S(\lambda, \eta)$.

\begin{proposition}
\label{propBPW}
{\rm (\cite[Propositions~5.10 and 5.14]{BPW})} Let $\lambda \in H^2(X,\C)$ and let $\eta\in H^2(X,\C)$ be the first 
Chern class of a relatively very ample line bundle on $X$. 
\begin{enumerate}
\item There exists $q\gg 0$ such that  $S_{\geq r}(\lambda, \eta) $ is Morita for all $r\geq q$.
\item $(X,\lambda)$ is a good parameter if and only if $S^{(p)}(\lambda, \eta)$ is Morita for $p\gg 0$. 
\end{enumerate}
\end{proposition}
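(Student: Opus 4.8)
The plan is to obtain both parts directly from \cite[Propositions~5.10 and~5.14]{BPW}. The only preliminary point is that the notion of Morita $\N$-algebra used there must be matched with Definition~\ref{good-defn}; this is a routine comparison of the three defining conditions — noetherianity of the corner rings $U_{\lambda+k\eta}$, the Morita equivalences $U_{\lambda+m\eta}\md\simeq U_{\lambda+k\eta}\md$ induced by the bimodules ${}_{k\eta}S_{m\eta}(\lambda)$, and the bijectivity of the multiplication maps ${}_{k\eta}S_{l\eta}(\lambda)\otimes_{U_{\lambda+l\eta}}{}_{l\eta}S_{m\eta}(\lambda)\to{}_{k\eta}S_{m\eta}(\lambda)$.

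For part~(1) I would recall the mechanism behind these conditions. By construction ${}_{k\eta}S_{m\eta}(\lambda)=\Gamma^{\C^*}({}_{k\eta}\mathcal{B}_{m\eta}(\lambda)[h^{-1/m}])$, where ${}_{k\eta}\mathcal{B}_{m\eta}(\lambda)$ is a $\C^*$-equivariant quantization of $\mathcal{L}^{k-m}$. Filtering by powers of $h^{1/m}$ gives a good filtration whose associated graded, for $k\geq m\gg 0$, is the degree-$(k-m)$ piece of the section ring of $\mathcal{L}$; this is where the (relative) very ampleness of $\mathcal{L}$ enters, through Serre vanishing $H^{>0}(X,\mathcal{L}^{k-m})=0$, the surjectivity of the multiplication maps on sections, and the fact that a sufficiently positive twist presents any coherent sheaf as a quotient of $\mathcal{O}_X(-N)^{\oplus r}$, so that the relevant bimodules become progenerators. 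Lifting these graded statements to the filtered level — using the uniqueness and completeness properties of $\mathcal{Q}_X^{\lambda}$ recorded in \ref{sec:BPWalgebra} — yields the three conditions of Definition~\ref{good-defn} uniformly for all $k\geq m\geq r$ once $r\gg 0$, which is part~(1).

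For part~(2) the point is that the degree-one bimodule of the Veronese ring $S^{(p)}(\lambda,\eta)$ quantizes $\mathcal{L}^{p}$, so as $p$ increases the positivity input of part~(1) is automatically satisfied; hence the only possible obstruction to $S^{(p)}(\lambda,\eta)$ being Morita is the failure of tensoring with ${}_{p\eta}S_{0}(\lambda)$ to be a Morita equivalence $U_{\lambda}\md\simeq U_{\lambda+p\eta}\md$, and \cite[Proposition~5.14]{BPW} identifies this, for $p\gg 0$, with $(X,\lambda)$ being a good parameter (i.e.\ with abelian localization), the converse direction being forced by Morita-ness of $S^{(p)}(\lambda,\eta)$ for large $p$. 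I expect the real obstacle, and the reason \cite{BPW} is quoted rather than reproved, to be the $\C^*$-equivariant bookkeeping needed to pass from the commutative geometry of $(X,\mathcal{L})$ to the filtered structure of the global-section bimodules and to identify their associated graded with sheaf cohomology on $X$ — exactly the content of \cite[\S5]{BPW}.
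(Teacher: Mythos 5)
Your approach matches the paper exactly: the paper states this result as a quotation of \cite[Propositions~5.10 and~5.14]{BPW} and offers no proof of its own, so deferring to those propositions (after the routine check that their notion of Morita $\N$-algebra agrees with Definition~\ref{good-defn}) is precisely what is intended. Your sketch of the underlying mechanism from \cite[\S 5]{BPW} is a reasonable gloss but is not required here.
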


\subsection{} The algebras constructed  by Proposition~\ref{propBPW} satisfy 
the Hypotheses (H1--H4) from Subsections~\ref{filthyp}
and~\ref{ample}.

\begin{proposition}\label{smpl-hold}
 Let $\lambda \in H^2(X,\C)$ and let $\eta\in H^2(X,\C)$ be the first Chern class of a relatively very ample line bundle on $X$. Assume that $(X,\lambda)$ is localizing. Then Hypotheses (H1--H4) hold
 for a  pair of directed algebras $(S^{(p)}(\lambda, \eta),T)$, where the  integer $p$ and algebra $T$ 
 are  constructed within the proof.
\end{proposition}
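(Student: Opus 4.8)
The plan is to verify each of the four hypotheses (H1)--(H4) in turn for a suitable Veronese rescaling $S^{(p)} = S^{(p)}(\lambda,\eta)$ together with an auxiliary algebra $T$ built by moving in the opposite cohomological direction. The key input throughout is the geometry and quantization theory recorded in Proposition~\ref{propBPW} and the surrounding setup from \cite{BPW}, together with the fact that $X\to Y=\Spec R_0$ is a conical symplectic resolution, hence automatically crepant with $Y$ normal (indeed $Y$ has symplectic, so canonical Gorenstein, singularities).

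\emph{Choice of $T$ and verification of (H1), (H3-first part).} First I would fix the line bundle data: since $\eta$ is the first Chern class of a relatively very ample line bundle, $R=\bigoplus_n R_n$ with $R_n = \Gamma(X,\mathcal{L}^{\otimes n})$ is a finitely generated $\Lambdaplus$-graded (here $\N$-graded) commutative ring with $\Proj R = X$, and after replacing $\mathcal{L}$ by a power (i.e.\ passing from $S$ to a Veronese $S^{(p)}$) we may assume $R$ is generated in degree $1$, so $R_mR_n=R_{m+n}$; this is the final assertion of (H2). By Proposition~\ref{propBPW}(1) there is $q\gg0$ with $S_{\geq r}$ Morita for all $r\geq q$, which gives a good parameter and hence (H1); after the Veronese rescaling the good parameter can be taken to be any sufficiently large integer, and this same $p$ and bound serve for $T$. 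The algebra $T$ is defined by the ``opposite-direction'' quantization: $T_{k,m} = {}_{-k\eta}S_{-m\eta}(\lambda)^{\opp}$ (or more precisely the Veronese version thereof), so that $T_{0,0} = U_\lambda^{\opp} = S_{0,0}^{\opp}$; the same citations from \cite{BPW} that establish the Morita property and noetherianity for $S$ apply verbatim to $T$, giving the first part of (H3).

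\emph{Verification of (H2) and the compatibility clause of (H3).} For (H2) I would invoke the deformation-quantization construction: the sheaf $\mathcal{Q}_X^{\lambda+k\eta}$ deforms $\mathcal{O}_X$ flatly in $h$, the bimodule ${}_{k\eta}\mathcal{B}_{m\eta}(\lambda)$ quantizes $\mathcal{L}^{k-m}$, and taking $\C^*$-invariant global sections with the order filtration (powers of $h^{1/m}$) yields $\gr S_{k,m} \cong \Gamma(X,\mathcal{L}^{k-m}) = R_{k-m}$, i.e.\ $\gr S \cong \tR$ as $\Lambdaplus$-directed algebras. Since $R = \bigoplus_n \Gamma(X,\mathcal{L}^{\otimes n})$ for $\mathcal{L}$ ample on an irreducible variety, $R$ is a finitely generated graded domain, and after the Veronese twist it satisfies $R_mR_n=R_{m+n}$. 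The compatibility clause of (H3) --- that $\gr(S_{\tau,0}\otimes_{S_{0,0}} T_{\gamma,0}) \cong R_{\tau+\gamma}$ --- is the one place genuine work is needed: one must show the tensor product over $U_\lambda$ of the two quantized section modules has associated graded equal to $\Gamma(X,\mathcal{L}^{\tau})\otimes_{R_0}\Gamma(X,\mathcal{L}^{\gamma})$ mapping isomorphically onto $\Gamma(X,\mathcal{L}^{\tau+\gamma})=R_{\tau+\gamma}$; this reduces, via the localizing hypothesis and \cite[\S5]{BPW}, to the statement that on the commutative level $\mathcal{L}^{\tau}\otimes_{\mathcal{O}_X}\mathcal{L}^{\gamma}\to \mathcal{L}^{\tau+\gamma}$ is an isomorphism inducing a surjection on global sections for $\tau,\gamma\gg0$ (Serre vanishing plus $R$ generated in degree one), combined with a standard $h$-adic/filtered flatness argument showing the filtration on the tensor product is the tensor product filtration with no collapse.

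\emph{Verification of (H4).} Finally, (H4) asks that $X$ be Gorenstein, that $\Spec R_0$ be normal, and that $X\to\Spec R_0$ be birational. Here $R_0 = \Gamma(X,\mathcal{O}_X) = \C[Y]$ since $\pi$ is a resolution of the affine $Y$, so $\Spec R_0 = Y$; normality of $Y$ is part of being a symplectic singularity (or follows since $Y$ is the affinization of the smooth $X$ and has rational Gorenstein, hence normal, singularities). The morphism $X\to Y$ is birational because it is a resolution of singularities. And $X$ is Gorenstein --- indeed it carries the symplectic form $\omega$, so $K_X = \det\Omega_X^1$ is trivialized by $\omega^{\wedge(\dim X/2)}$, whence $X$ is even Calabi--Yau, a fortiori Gorenstein. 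This disposes of (H4). \emph{The main obstacle} I anticipate is the compatibility clause in (H3): keeping careful track of the two order filtrations through the relative tensor product over the noncommutative ring $U_\lambda$ and showing the associated graded is exactly $R_{\tau+\gamma}$ with no lower-order surprises; everything else is either a direct quotation of Proposition~\ref{propBPW} or standard symplectic-resolution geometry.
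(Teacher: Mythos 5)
Your verifications of (H1), (H2) and (H4), and your choice of $T$ as the ``opposite direction'' algebra, are essentially those of the paper: the paper realises $T$ concretely as (a common Veronese of) the $\Z$-algebra $S(-\lambda,\eta)$ built from $\mathcal{Q}_X^{-\lambda}\cong(\mathcal{Q}_X^{\lambda})^{\opp}$, using Proposition~\ref{propBPW}(2) and the localizing hypothesis to make $S$ itself Morita and Proposition~\ref{propBPW}(1) to make $T_{\geq 1}$ Morita. (A small indexing point: as written, $T_{k,m}={}_{-k\eta}S_{-m\eta}(\lambda)^{\opp}$ for $k\geq m$ quantizes $\mathcal{L}^{m-k}$, a nonpositive power, and has the wrong bimodule structure; you need the transposed indices, equivalently the paper's $S(-\lambda,\eta)$.) The genuine gap is in the step you yourself single out, the compatibility clause of (H3). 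You propose to show that $\gr(S_{\tau,0}\otimes_{S_{0,0}}T_{\gamma,0})$ equals $R_\tau\otimes_{R_0}R_\gamma$ (``no collapse'') and that $R_\tau\otimes_{R_0}R_\gamma\to R_{\tau+\gamma}$ is an isomorphism. The second claim is false in general: $R_\tau=\Gamma(X,\mathcal{L}^{\otimes r\tau})$ is only a rank-one torsion-free, non-projective module over the singular ring $R_0=\C[Y]$, so the multiplication map is surjective (after the Veronese) but not injective -- already for the ideal $I=(x,y)$ in $\C[x,y]$ the kernel of $I\otimes_{\C[x,y]}I\to I^2$ contains the torsion element $x\otimes y-y\otimes x$. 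Consequently a literal ``no collapse'' statement would contradict the conclusion you want; what must be proved is that the collapse is exactly onto $R_{\tau+\gamma}$, a genuinely noncommutative phenomenon invisible at the commutative level, so the proposed reduction cannot work.

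The paper's proof of (H3), which is the heart of the proposition, runs differently. Using Lemma~\ref{EndRv} and (H2) it notes that $S_{m,0}$ and $T_{n,0}$ are torsion-free of rank one, and -- this is precisely where the localizing hypothesis enters, via Proposition~\ref{propBPW}(2) -- that $S_{m,0}$ is projective over $S_{0,0}=U_\lambda$, so that $S_{m,0}\otimes_{S_{0,0}}T_{n,0}$ is again torsion-free of rank one (the analogous statement over $R_0$ fails, as above). It then uses the uniqueness of bimodule quantizations to identify ${}_{rn\eta}\mathcal{B}_0(-\lambda)$, viewed through $(\mathcal{Q}_X^{\lambda})^{\opp}\cong\mathcal{Q}_X^{-\lambda}$, with ${}_{0}\mathcal{B}_{-rn\eta}(\lambda)$, and \cite[Proposition~5.2]{BPW} to obtain a sheaf-level multiplication isomorphism onto ${}_{rm\eta}\mathcal{B}_{-rn\eta}(\lambda)$; taking invariant sections gives a filtered map $\mu:S_{m,0}\otimes_{S_{0,0}}T_{n,0}\to{}_{rm\eta}S_{-rn\eta}(\lambda)$, which is injective by the torsion-free rank-one argument and filtered surjective because $R_mR_n=R_{m+n}=\gr\,{}_{rm\eta}S_{-rn\eta}(\lambda)$; injectivity plus filtered surjectivity then force $\gr(S_{m,0}\otimes_{S_{0,0}}T_{n,0})=R_{m+n}$. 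Your sketch contains neither the comparison module ${}_{rm\eta}S_{-rn\eta}(\lambda)$ nor the projectivity/torsion-freeness mechanism, and the commutative statement it would reduce to is untrue, so (H3) is not established as it stands.
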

\begin{proof}
By Proposition~\ref{propBPW} we can find $p\gg 0$ such that $S^{(p)}(\lambda, \eta)$ is Morita. Now consider 
$(\mathcal{Q}_X^{\lambda})^{op}$, another $\C^*$-equivariant quantization of $\mathcal{O}_X$. By 
\cite[Proposition~3.2]{BPW} it is isomorphic to $\mathcal{Q}_X^{-\lambda}$. Therefore, 
$U_{\lambda}^{op}\cong U_{-\lambda}$, and so repeating the above construction, {\it with the same line bundle 
$\mathcal{L}$}, we produce another $Z$-algebra $S(-\lambda, \eta)$. It may be that $(X,-\lambda)$ is not localizing. 
By Proposition~\ref{propBPW}, however, we do know that for $q\gg 0$ the $\mathbb{N}$-algebra 
$$S_{\geq q}(-\lambda,\eta) = \bigoplus_{k\geq m \geq q}  {}_{k\eta}S_{m\eta}(-\lambda)$$ is Morita. 

So we will consider the following algebras, where $r$ is the least common multiple of $p$ and $q$.
$$S = \bigoplus_{a\geq b \geq 0} {}_{ra\eta}S_{rb\eta}(\lambda, \eta), \quad T =
 \bigoplus_{a\geq b \geq 0} {}_{ra\eta}S_{rb\eta}(-\lambda, \eta).$$

(H1) By construction, $S$ is Morita and so all values are good for it, while $T_{\geq1}$ is Morita.

(H2) As observed in \cite[Section~5.2]{BPW},  $\gr S =\gr T = \tR$ where 
$R_a = H^0(X, \mathcal{L}^{\otimes ra})$. Now it may be that   $R_m R_n \not= R_{m+n}$. But 
  $X$ is smooth, hence normal, and so this property will  hold if we replace $R$ by some further  Veronese subring. 
Also, since $X$ is reduced and irreducible, certainly $R$ is a domain. 
So, after passing to the appropriate Veronese ring, (H2) holds.

(H3) We must prove that $\gr (S_{m0}\otimes_{S_{00}} T_{n0}) \cong R_{m+n}$ for all $m,n$.   
By Lemma~\ref{EndRv}, each  $R_m$ is a torsion-free rank one $R_0$-module. Since  
    $\gr {}_{rm\eta}S_{0}(\lambda)=  \gr {}_{rm\eta}S_{0}(-\lambda) = R_m$,  it follows that both 
    ${}_{rm\eta}S_{0}(\lambda)$ and ${}_{rn\eta}S_{0}(-\lambda)$ are torsion-free of rank one on both sides. 
 Since $S$ is Morita,  certainly $S_{m0} = {}_{rm\eta}S_{0}(\lambda)$ is a projective ${}_0S_0(\lambda)$-module, so
   the tensor product 
      ${}_{rm\eta}S_{0}(\lambda) \otimes_{{}_0S_0(\lambda)} {}_{rn\eta}S_{0}(-\lambda)$ is therefore  torsion-free of rank one.

    As observed earlier, $\mathcal{Q}_X^{-\lambda} \cong (  \mathcal{Q}_X^{\lambda})^{op}$. Thus
     the $(\mathcal{Q}_X^{-\lambda+rn\eta}, \mathcal{Q}_X^{-\lambda})$-bimodule
     $  {}_{rn\eta}\mathcal{B}_{0}(-\lambda)$ may be considered as a $(\mathcal{Q}_X^{\lambda}, \mathcal{Q}_X^{\lambda-rn\eta})$-bimodule quantizing $\mathcal{L}^{\otimes rn}$, and as such must be isomorphic to
    ${}_{0} \mathcal{B}_{-rn\eta}(\lambda)$  by the uniqueness of quantization property mentioned in \ref{sec:BPWalgebra}.
     Hence, by
   \cite[Proposition~5.2]{BPW},  multiplication produces an isomorphism of 
   $(\mathcal{Q}_X^{\lambda+rn\eta}, \mathcal{Q}_X^{\lambda - rm\eta})$-bimodules 
   $${}_{rm\eta}\mathcal{B}_{0}(\lambda)\otimes_{\mathcal{Q}_X^{\lambda}} {}_{rn\eta}
   \mathcal{B}_{0}(-\lambda)
   \ = \  {}_{rm\eta}\mathcal{B}_{0}(\lambda)\otimes_{\mathcal{Q}_X^{\lambda}}{}_{0}
   \mathcal{B}_{-rn\eta}(\lambda)
 \   \stackrel{\sim}{\longrightarrow}  \ {}_{rm\eta}   \mathcal{B}_{-rn\eta}(\lambda).$$
    Taking invariant global sections then produces a non-zero multiplication map 
    $$\mu:  S_{m0}\otimes_{S_{00}}T_{n0} \longrightarrow {}_{rm\eta}S_{-rn\eta}(\lambda).$$  
       By torsion-freeness, $\mu$ must   be injective. But $\mu$  is also filtered surjective because  
     $$gr S_{m0} \cdot \gr T_{n0} = R_m \cdot R_n = R_{m+n} = \gr {}_{rm\eta}S_{-rn\eta}(\lambda).$$ 
       Thus $\gr(S_{m0}\otimes_{S_{00}} T_{n0}) = R_{m+n}$, as required.

(H4) Symplectic singularities are automatically normal and so (H4)  follows from the fact that  $X\longrightarrow Y$
is a resolution of singularities.
\end{proof}

\subsection{} \label{char-defn2}
Given a $U_{\lambda}$-module $N$, set $\mathcal{N} = {\sf Loc}(N)$. We define $\Char_{\sf loc}(N)$ to be 
the support on $X$ of the coherent sheaf $\overline{\mathcal{N}} :=\mathcal{N}(0)/h^{1/m}\mathcal{N}(0)$ where 
$\mathcal{N}(0)$ is a lattice for $\mathcal{N}$. This is well-defined, independent of the choice of lattice. \index{$\Char$, 
the characteristic variety of!a deformation-quantization sheaf}

We claim that  $\Char_{\sf loc}(N) = \Char(\Psi(N))$, where $\Char(\Psi(N))$ is defined using the $\Z$-algebra in
 Proposition~\ref{smpl-hold}, as in Definition~\ref{char-defn}. Indeed the discussion preceding Theorem 5.6 in \cite{BPW} together with the first two 
 paragraphs of the proof of \cite[Proposition~5.9]{BPW}  show that there is an isomorphism 
 $$\bigoplus_{a\geq 0} \Gamma^{\C^*}({}_{ra\eta}\mathcal{B}_0(\lambda)[h^{-1/m}]\otimes_{\mathcal{D}_X^{\lambda}}
  \mathcal{N})\stackrel{\sim}{\longrightarrow} \bigoplus_{a\geq 0} {}_{ra\eta}S_0(\lambda) \otimes_{U_{\lambda}} N 
  = \Psi(N)$$ and furthermore that a lattice $\mathcal{N}(0)$ for $\mathcal{N}$ induces a good filtration on 
  $\Psi(N)$ such that 
  $$\gr (\Psi(N)) = \bigoplus_{a\geq 0} \Gamma(X, \overline{\mathcal{N}} \otimes \mathcal{L}^{\otimes ra})$$
   This shows two things: first that, by its goodness, this filtration on $\Psi(N)$ may be used to calculate 
   $\Char (\Psi(N))$; second that this equals the support of $\overline{\mathcal{N}}$. In other words $\Char_{\sf loc}(N)$.

It follows that $\Char(N)$ is independent of the choice of $\eta$ and of Veronese algebra in Proposition~\ref{smpl-hold}.      
\label{mainexampleequiresult}  
So combining this with Theorem~\ref{main-thm} gives:

\begin{corollary} Let $\lambda\in H^2(X,\C)$ and suppose that $(X,\lambda)$ is localizing. If $M$ is either a simple 
$U_{\lambda}$-module or a prime factor of $U_\lambda$, then  the characteristic variety $\Char (M)$  is equidimensional.
\end{corollary}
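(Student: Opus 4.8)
The plan is to deduce the corollary from Theorem~\ref{main-thm} by means of the $\Z$-algebra constructed in Proposition~\ref{smpl-hold}. First I would apply Proposition~\ref{smpl-hold}: choosing $\eta$ to be the first Chern class of a relatively very ample line bundle on $X$, it produces an integer $p$ and an auxiliary directed algebra $T$ such that $(S,T) = (S^{(p)}(\lambda,\eta),T)$ satisfies Hypotheses (H1--H4), with $S$ Morita. In particular $0$ is a good parameter for the $\mathbb{N}$-algebra $S$, and $S_{0,0} = {}_0S_0(\lambda) = U_\lambda$ is noetherian, so Proposition~\ref{corner-ring} supplies an equivalence of categories $\Psi \colon U_\lambda\lmod \buildrel{\sim}\over{\longrightarrow} S\lqgr$, and by Definition~\ref{char-defn} one has $\Char(M) = \Char\big(\Psi(M)\big)$ for any $M \in U_\lambda\lmod$. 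Any simple $U_\lambda$-module and any prime factor $U_\lambda/P$ is cyclic, hence noetherian, so this applies to both of the modules in the statement.

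Next I would handle the two cases using the observations in Subsection~\ref{s-homog-defn}. If $M$ is a simple $U_\lambda$-module, then $\MM := \Psi(M)$ is a simple object of $S\lqgr$, and hence is $s$-homogeneous for some $s \in \mathbb{N}$. If instead $M = U_\lambda/P$ for a prime ideal $P$ of $U_\lambda = S_{0,0}$, then $\MM := \Psi(S_{0,0}/P)$ is $s$-homogeneous for some $s$ by the example in Subsection~\ref{s-homog-defn}, applied with the good parameter $0$. In either case Theorem~\ref{main-thm} applies to $\MM$ and shows that every irreducible component of $\Char(M) = \Char(\MM)$ has dimension $s$; that is, $\Char(M)$ is equidimensional.

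To finish I would record that the answer is intrinsic: by the discussion in Subsection~\ref{char-defn2}, $\Char(\Psi(M))$ agrees with the geometrically defined $\Char_{\sf loc}(M)$ obtained from a $\mathcal{Q}_X^\lambda$-lattice of ${\sf Loc}(M)$, and in particular it is independent of the choices of $\eta$, of $p$, and of the Veronese subring of $R$ used to arrange the domain and generation conditions in (H2) and (H3). I do not anticipate any real obstacle here: all of the substance lies in Theorem~\ref{main-thm} and Proposition~\ref{smpl-hold}, and the only point that needs care is to note that the good parameter of $S^{(p)}(\lambda,\eta)$ can be taken to be $0$, so that the corner ring is exactly $U_\lambda$ and the equivalence $\Psi$ carries simple modules (respectively prime factors) of $U_\lambda$ to the $s$-homogeneous objects of $S\lqgr$ to which Theorem~\ref{main-thm} applies.
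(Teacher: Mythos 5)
Your proposal is correct and follows essentially the same route as the paper: the paper deduces the corollary by combining Proposition~\ref{smpl-hold} (so that the $\Z$-algebra with $S_{0,0}=U_\lambda$ satisfies (H1--H4)), the identification of $\Char(M)$ with $\Char(\Psi(M))$ and its independence of choices from Subsection~\ref{char-defn2}, the $s$-homogeneity of simple objects and of $\Psi(S_{0,0}/P)$ noted in Subsection~\ref{s-homog-defn}, and Theorem~\ref{main-thm}. Your write-up simply makes these steps explicit.
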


\subsection{Rational Cherednik algebras}\label{97}
 We apply the above analysis to the symplectic resolution 
$$\pi: X = \hi \longrightarrow Y = {\sf Sym}^n \C^2,$$ where $\pi$ is the Hilbert-Chow morphism. We take the 
$\C^*$-action which is induced from dilation on $\C^2$. For $\lambda \in H^2(\hi,\C) = \C$ there is an explicit 
realization of the algebra $U_{\lambda}= (\mathcal{D}_X^{\lambda}(X))^{\C^*}$ as the spherical rational Cherednik 
algebra of type $A$, defined as follows. 

Let $W={S}_n$ be  the
symmetric group and write    $e = \frac{1}{|W|}\sum_{w\in W} w\in \C W$ for  the 
trivial idempotent.  Let $\h = \C^n$ denote  the
permutation representation of $W$ and 
   $$\hr = \{ (z_1, \ldots , z_n)\in\h : z_i\neq z_j \text{ for all }1\leq i < j\leq n\}$$
   the subset of $\h$ on which $W$ acts freely.   
 
Define  ${H}_{\lambda}$ to be the subalgebra of  $\D(\hreg)\rtimes W$ generated by $W$, 
the vector space $\h^*=\sum x_i\C\subset \C [\h]$ of linear functions,  and the {Dunkl operators}
\begin{equation} \label{dunkdef}
D_\lambda(y_i) \ = \  \frac{\partial}{\partial x_i} \ -\  \frac{1}{2}\sum_{j \neq k} 
(\lambda - \frac{1}{2})\frac{\langle y_i, x_j-x_k\rangle}{x_j - x_k} (1 - s_{jk})\end{equation} 
where $\{y_1, \ldots , y_n\} \subset \h$ is  the dual basis to $\{x_1, \ldots , x_n\}$
and  the $s_{jk}\in W$ are simple trans\-positions. The  \emph{spherical subalgebra} is  defined to be 
$e{H_{\lambda}}e$: thanks to \cite[Theorem~2.8]{GGS} it is isomorphic to $U_{\lambda}$.
\footnote{In much of the 
literature the rational Cherednik algebra with parameter $\lambda$ is what we call $H_{\lambda+\frac{1}{2}}$.} 
One can also  define $H_\lambda$ in terms of the reflection representation of  
$S_n$ instead of $\C^n$ However,  as noted in \cite{GS1}, the two theories are exactly parallel and the same results hold in the two cases.

Set $\mathcal{C} =  \{z + \frac{1}{2}: z=\frac{m}{d}\ \mathrm{where}\ m, d \in \Z\text{ with }  2\leq d\leq n, \text{ and } z\notin \Z
\}.$ By
 \cite[Theorem~1.6]{GS1} and Proposition~\ref{propBPW}, $(X,\lambda)$ is localizing provided 
 $\lambda\notin \mathcal{C}\cap (-1/2,1/2)$.
 (We remark that \cite{GS1} has the additional restriction that $\lambda\not\in\mathbb{Z}$, but this is removed using  
   \cite[Theorem~4.3]{BE} and \cite[Remark~3.14(1)]{GS1}.)
       
\subsection{}\label{98} Combining this discussion with  Corollary~\ref{mainexampleequiresult} gives the following result.

\begin{corollary}\label{equi-cor} Assume that $\lambda \notin \mathcal{C}\cap (-1/2,1/2)$.
  Let $M$ be a simple   $U_{\lambda}$-module
 or a prime factor ring of $U_{\lambda}$. 
Then the characteristic variety $\Char M\subseteq \hi$ is equidimensional.\qed
\end{corollary}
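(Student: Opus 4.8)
The plan is to obtain the statement as a direct specialisation of Corollary~\ref{mainexampleequiresult} to the Hilbert--Chow resolution $\pi\colon X=\hi\to Y={\sf Sym}^n\C^2$ of Subsection~\ref{97}. First I would check that $(X,Y,\pi)$ is a conical symplectic resolution in the sense of Subsection~\ref{BPW-defns}: the Hilbert scheme $\hi$ is smooth and carries a symplectic form, $\pi$ resolves the symplectic singularity ${\sf Sym}^n\C^2$, and the $\C^*$-action inherited from dilation on $\C^2$ has non-negative weights on $\C[Y]$ with $\C[Y]^{\C^*}=\C$ and scales the symplectic form with a positive weight. I would then invoke \cite[Theorem~2.8]{GGS} to identify $U_\lambda=(\mathcal{D}_X^\lambda(X))^{\C^*}$ with the spherical Cherednik algebra $eH_\lambda e$, so that the hypothesis on $M$ here is exactly the hypothesis of Corollary~\ref{mainexampleequiresult}, and so that the variety $\Char M$ attached to the Cherednik algebra agrees, by the discussion in Subsection~\ref{char-defn2}, with $\Char_{\sf loc}(M)=\Char(\Psi(M))$.

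Next I would record the localising condition. By \cite[Theorem~1.6]{GS1} together with Proposition~\ref{propBPW} --- with the superfluous restriction $\lambda\notin\Z$ removed using \cite[Theorem~4.3]{BE} and \cite[Remark~3.14(1)]{GS1}, exactly as explained in Subsection~\ref{97} --- the pair $(X,\lambda)$ is localising whenever $\lambda\notin\mathcal{C}\cap(-1/2,1/2)$, which is precisely the hypothesis of the corollary.

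With these two facts in hand the corollary follows from Corollary~\ref{mainexampleequiresult}: Proposition~\ref{smpl-hold} supplies an exponent $p$ and an auxiliary algebra $T$ so that $(S,T)=(S^{(p)}(\lambda,\eta),T)$ satisfies Hypotheses (H1--H4), with $S_{00}=U_\lambda$. The module $\mathcal{M}=\Psi(M)\in S\lqgr$ is then simple when $M$ is, and equals $\Psi(U_\lambda/P)$ when $M=U_\lambda/P$ is a prime factor ring; in either case $\mathcal{M}$ is $s$-homogeneous for some $s\in\N$, by the discussion in Subsection~\ref{s-homog-defn}. Theorem~\ref{main-thm} then shows that every irreducible component of $\Char\mathcal{M}$ has dimension $s$, and by Subsection~\ref{char-defn2} we have $\Char M=\Char_{\sf loc}(M)=\Char\mathcal{M}$, independently of the auxiliary choices of $p$, $\eta$ and $T$. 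The only point demanding care --- and the closest thing to an obstacle --- is the compatibility of the three notions of characteristic variety in play (the filtered-algebra definition of Subsection~\ref{char-defn}, the deformation-quantisation definition $\Char_{\sf loc}$, and the $\Z$-algebra definition via $\Psi$) and their independence of the auxiliary data; but this has already been settled in Subsection~\ref{char-defn2}, so nothing new is required.
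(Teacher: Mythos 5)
Your proposal is correct and follows essentially the same route as the paper: the paper's proof of this corollary is exactly the specialisation of Corollary~\ref{mainexampleequiresult} to the Hilbert--Chow resolution, using the identification $U_\lambda\cong eH_\lambda e$ from \cite[Theorem~2.8]{GGS}, the localising criterion from \cite[Theorem~1.6]{GS1} together with Proposition~\ref{propBPW} (with the restriction $\lambda\notin\Z$ removed via \cite{BE} and \cite[Remark~3.14(1)]{GS1}), and the compatibility of characteristic varieties established in Subsection~\ref{char-defn2}. Your unpacking of the chain Proposition~\ref{smpl-hold} $\to$ $s$-homogeneity of $\Psi(M)$ $\to$ Theorem~\ref{main-thm} is just the content of Corollary~\ref{mainexampleequiresult} itself, so nothing differs in substance.
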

 
This answers   \cite[Question~4.9]{GS2} and \cite[Problem~8]{Ro}; by \cite[Theorem~7.6]{GGS},
 the definition of a characteristic variety given there  agrees with the definition given here.

\subsection{} We end with an amusing consequence of the above corollary  for which we need to assume 
that $\lambda\not\in \mathbb{Z}$. As in \cite[(2.7.1)]{GS2}, write  ${\bf ch}(M)$ for the characteristic cycle of a module 
$M\in \mathcal{O}_\lambda(S_n)$, the category $\mathcal{O}$ for the rational Cherednik algebra. 
By \cite[Theorem~6.7]{GS2},  the cycles are fully supported   on the subvarieties 
 $Z_{\mu} \subset \hi$ defined in \cite[Section~6.4]{GS2} where $\mu$ is a partition of $n$. Write ${\bf ch}_{\mu}(M)$ for the 
 multiplicity of ${\bf ch}(M)$ along $Z_{\mu}$. 

As in \cite[6.10 and 6.11]{GS2}, there is  an isomorphism
$\chi: K_0(\mathcal{O}_\lambda(S_n)) \longrightarrow \Lambda_n$ where $\Lambda_n$ is the degree n part of the 
ring of symmetric functions. It is defined by
$$ \chi(M) = \sum_{\mu} {\bf ch}_{\mu} (M) m_{\mu}$$ where $m_{\mu}$ is the monomial 
symmetric function associated to $\mu$. 
Under this isomorphism it was shown in [loc.cit] that $\chi (\Delta_\lambda(\mu)) = s_{\mu}$, the Schur function 
associated to
 $\mu$. The same is true for the co-standard modules $\nabla_\lambda(\mu)$ since they have the same image in 
 $ K_0(\mathcal{O}_\lambda(S_n))$  as $\Delta_\lambda(\mu)$.

Since  $\lambda\notin \Z$ we may apply  \cite[Theorem~6.13]{Rq2}
which shows that $S_q(n,n)\md$ and $\mathcal{O}_\lambda(S_n)$ are 
equivalent. Here $S_q(n,n)$ is the $q$-Schur algebra at $q=-\exp(2\pi \sqrt{-1} \lambda)$, which is the equivalent to the category
 of representations of the quantum group $G_q(n)$ of degree $n$. By taking characters we have a mapping
  $$\chi_q : S_q(n,n)\md \longrightarrow \Lambda_n.$$
The category $S_q(n,n)\md$ has Weyl modules, written $\nabla_q(\mu)$: the Weyl character formula shows that   $\chi_q(\nabla_q(\mu)) = s_{\mu}$. 
  
Putting these observations together gives an equivalence  
$$\Theta : S_q(n,n)\md \longrightarrow \mathcal{O}_\lambda(S_n)$$ 
which sends Weyl modules to co-standard modules, so intertwines $\chi_q$ with $\chi$. Now, in $\chi_q(V)$, the 
coefficient $m_{\mu}$ is {\it by definition} the multiplicity of the weight space $V_{\mu}$. 
We therefore deduce:

\begin{proposition}\label{known-unknown} Assume $\lambda \notin \mathcal{C}\cap (-1/2,1/2)$ and that $\lambda\notin \Z$. Then, in the above notation,
${\bf ch}_{\mu}(M) = \dim \Theta^{-1}(M)_{\mu}$ for any partition $\mu$ of $n$.
\end{proposition}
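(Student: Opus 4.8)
The plan is to unwind the chain of equivalences and identifications already assembled in Subsections~\ref{97} and~\ref{98}. First I would recall that, by the discussion just given, we have an equivalence $\Theta : S_q(n,n)\md \longrightarrow \mathcal{O}_\lambda(S_n)$ fitting into a commutative triangle with $\chi_q$ and $\chi$: that is, $\chi\circ\Theta = \chi_q$ as maps to $\Lambda_n$, because $\Theta$ sends Weyl modules $\nabla_q(\mu)$ to costandard modules $\nabla_\lambda(\mu)$, both of which are sent to the Schur function $s_\mu$, and the Weyl/costandard modules form a basis of the respective Grothendieck groups. Hence for any $M\in\mathcal{O}_\lambda(S_n)$ we have $\chi\bigl(M\bigr) = \chi_q\bigl(\Theta^{-1}(M)\bigr)$ in $\Lambda_n$.

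Next I would compare coefficients of the monomial symmetric function $m_\mu$ on both sides. On the left, by the very definition of $\chi$ recalled above, the coefficient of $m_\mu$ in $\chi(M)$ is ${\bf ch}_\mu(M)$, the multiplicity of the characteristic cycle ${\bf ch}(M)$ along the subvariety $Z_\mu\subset\hi$ --- this is where Corollary~\ref{equi-cor} enters, since it is equidimensionality of $\Char M$ that guarantees ${\bf ch}(M)$ is supported on the top-dimensional components and so that the expansion $\chi(M)=\sum_\mu {\bf ch}_\mu(M)\,m_\mu$ is the correct finite expansion in the monomial basis (indeed this is exactly the content of \cite[Theorem~6.7]{GS2} as cited). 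On the right, $\chi_q(V)=\sum_\mu (\dim V_\mu)\,m_\mu$ by definition of the character of a $G_q(n)$-representation: the coefficient of $m_\mu$ is the dimension of the $\mu$-weight space $V_\mu$. Applying this to $V=\Theta^{-1}(M)$ and equating the two expansions term by term yields ${\bf ch}_\mu(M)=\dim\Theta^{-1}(M)_\mu$ for every partition $\mu$ of $n$, which is the assertion.

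The only genuinely substantive input is the equidimensionality supplied by Corollary~\ref{equi-cor} (via Theorem~\ref{main-thm}); everything else is bookkeeping with the previously established identifications $\chi(\Delta_\lambda(\mu))=\chi(\nabla_\lambda(\mu))=s_\mu$, the Weyl character formula $\chi_q(\nabla_q(\mu))=s_\mu$, and the Rouquier equivalence $S_q(n,n)\md\simeq\mathcal{O}_\lambda(S_n)$ of \cite[Theorem~6.13]{Rq2}. I expect the main --- very minor --- obstacle to be purely a matter of care: checking that $\Theta$ can indeed be chosen so that $\chi\circ\Theta=\chi_q$ rather than merely that the two composites agree up to an automorphism of $\Lambda_n$. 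This follows because both maps are determined on the standard/Weyl basis, on which they agree by construction, and the monomial symmetric functions $\{m_\mu\}$ form a basis of $\Lambda_n$ so that reading off coefficients is unambiguous. With that in hand the proposition is immediate.
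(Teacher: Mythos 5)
Your argument is essentially the paper's own proof: the equivalence $\Theta$ intertwines $\chi_q$ with $\chi$ because both send the Weyl/costandard bases to the Schur functions $s_\mu$, and the identity follows by comparing coefficients of the monomial symmetric functions $m_\mu$, which on one side are the multiplicities ${\bf ch}_\mu(M)$ and on the other the weight-space dimensions. The only quibble is attribution: the fact that the cycles are fully supported on the $Z_\mu$ is taken from \cite[Theorem~6.7]{GS2} rather than from Corollary~\ref{equi-cor}, so your remark that equidimensionality is the substantive input overstates its role in this particular bookkeeping step.
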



 \section*{Index of Notation}\label{index}
\begin{multicols}{2}
{\small  \baselineskip 14pt

Acyclic sheaf \hfill\eqref{acyclic-sect}

$\Char(M) $ \hfill\eqref{char-defn}, \eqref{char-defn2}

$\chdim(M)$ \hfill\eqref{char-defn}

Directed algebras, \hfill\eqref{2.1}

Duals $\MM^\ve$, $\MM^\vee$  of $\MM\in S\lQGr$ \hfill\eqref{homfil}

$\tR$; a directed algebra\hfill\eqref{Ex2.2}

Ext groups $\SSExt$ for $S\lQGr$    \hfill\eqref{ext-defn}, \eqref{agreement}

Filtrations and good filtrations  \hfill\eqref{sheaf-filt-defn}

Filtered projective resolutions \hfill\eqref{resolution-chat}  

Good parameter \hfill\eqref{good-defn}

Hypotheses (H1--H3)\hfill\eqref{filthyp}

Hypothesis (H4)\hfill\eqref{ample}

$j(M)$; the grade of $M$ \hfill\eqref{j-defn}

Locally left noetherian  \hfill\eqref{projgen1}

$M_{\ast,\gamma}$  \hfill\eqref{ext-defn}

Morita directed algebras  \hfill\eqref{good-defn}

$S\lGr$,   $S\lgr$; categories of   modules\hfill\eqref{graded-defn}

$S\lQGr$, $S\lqgr$; quotient categories \hfill\eqref{lackofsymmetry}

$s$-homogeneous modules \hfill\eqref{s-homog-defn}

${\sf S}_{\ast,\lambda}$; distinguished objects in $S\lQGr$  \hfill\eqref{hommumjum}

 $\mathscr{S}_{\lambda}=
 \pi_S({\sf S}_{\ast, \lambda}) $  \hfill\eqref{projective}

Shift $S_{\geq \lambda}$   \hfill\eqref{shiftlemma}

Support of a module \hfill\eqref{support-defn}

Tensor product filtration\hfill\eqref{filt-notation}

$\XX$, $\XX_\gamma$; dualising sheaves \hfill\eqref{acyclic-sect}}
\end{multicols}


\end{document}